\documentclass[reqno,english]{amsart}
\setlength{\textwidth}{6.5in} 
\setlength{\textheight}{9in}
\setlength{\topmargin}{-0.5in}
\setlength{\oddsidemargin}{0in}
\setlength{\evensidemargin}{0in}

\usepackage{algorithm,enumerate}
\usepackage{algpseudocode}
\usepackage{comment}

\usepackage{amsmath}
\usepackage[hidelinks]{hyperref}
\usepackage[noabbrev,capitalize,nameinlink]{cleveref}
\usepackage{enumerate,amsmath,amsthm,latexsym,amssymb,color}
\usepackage{graphicx}
\usepackage{enumitem} 
\usepackage[
    sorting=nyt,
    maxbibnames=99
]{biblatex}

\def\cS{{\mathcal S}}
\def\cC{{\mathcal C}}

\def\cP{\mathcal{P}}

\newcommand{\set}[1]{\left\{#1\right\}}

\def\cE{\mathcal{E}}

\def\cV{\mathcal{V}}

\def\cC{\mathcal{C}}

\def\ccA{\overline{A}}
\def\ccC{\overline{C}}

\renewcommand{\Pr}{\operatorname{\bf Pr}}
\newcommand\bfrac[2]{\left(\frac{#1}{#2}\right)}

\parindent 0in
\parskip .15in

\newtheorem{theorem}{Theorem}[section]

\newtheorem{lemma}[theorem]{Lemma}
\newtheorem{corollary}[theorem]{Corollary}

\newtheorem{claim}[theorem]{Claim}

\newtheorem{definition}[theorem]{Definition}

\newtheorem{question}[theorem]{Question}

\addbibresource{bib.bib}


\begin{document}

\title{Robust Hamiltonicity in families of Dirac graphs}
\author{Michael Anastos$^\ast$}
\thanks{$\ast$Institute of Science and Technology Austria (ISTA), Klosterneuburg 3400, Austria. Email: {\tt michael.anastos@ist.ac.at}. Supported by the European Union’s Horizon 2020 research and innovation programme under the Marie Sk\l{}odowska-Curie grant agreement No.\ 101034413 \includegraphics[width=4.5mm, height=3mm]{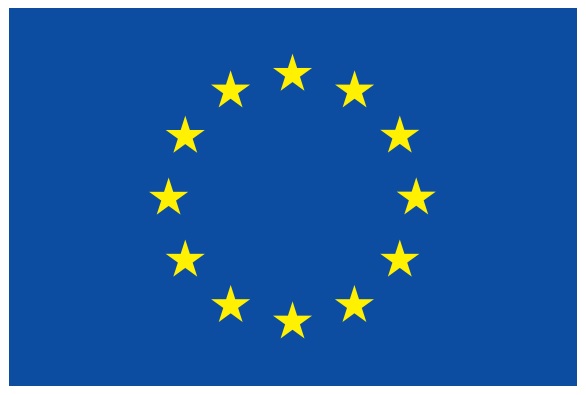}.}
\author{Debsoumya Chakraborti$^\dagger$}
\thanks{$\dagger$Mathematics Institute, University of Warwick, Coventry, CV4 7AL, UK. E-mail: {\tt debsoumya.chakraborti@warwick.ac.uk}. Supported by the Institute for Basic Science (IBS-R029-C1), and the European Research Council (ERC) under the European Union Horizon 2020 research and innovation programme (grant agreement No. 947978).}
\date{}

\begin{abstract}
A graph is called Dirac if its minimum degree is at least half of the number of vertices in it. Joos and Kim showed that every collection $\mathbb{G}=\{G_1,\ldots,G_n\}$  of Dirac graphs on the same vertex set $V$ of size $n$ contains a Hamilton cycle transversal, i.e., a Hamilton cycle $H$ on $V$ with a bijection $\phi:E(H)\rightarrow [n]$ such that $e\in G_{\phi(e)}$ for every $e\in E(H)$. 

In this paper, we determine, up to a multiplicative constant, the threshold for the existence of a Hamilton cycle transversal in a collection of random subgraphs of Dirac graphs in various settings. Our proofs rely on constructing a spread measure on the set of Hamilton cycle transversals of a family of Dirac graphs.

As a corollary, we obtain that every collection of $n$ Dirac graphs on $n$ vertices contains at least $(cn)^{2n}$ different Hamilton cycle transversals $(H,\phi)$ for some absolute constant $c>0$. This is optimal up to the constant $c$. Finally, we show that if $n$ is sufficiently large, then every such collection spans $n/2$ pairwise edge-disjoint Hamilton cycle transversals, and this is best possible. These statements generalize classical counting results of Hamilton cycles in a single Dirac graph.
\end{abstract}

\maketitle

\section{Introduction}

Determining whether a graph contains a Hamilton cycle (i.e., a cycle that uses every vertex of the graph) has been one of the classical and well-known NP-complete problems; see~\cite{karp2010reducibility}. Thus, finding general conditions or properties of graphs ensuring a Hamilton cycle has been one of the fundamental areas of research. Dirac~\cite{dirac1952some} proved that every $n$-vertex graph with minimum degree at least $n/2$ (such graphs will be referred to as \textit{Dirac graphs} throughout this paper) contains a Hamilton cycle. This minimum degree condition is best possible. Since the inception of the result of Dirac, there has been much interest in extending and finding robust versions of this result. 

For example, a natural way to show robustness is to show that any Dirac graph has many Hamilton cycles, not just one. Indeed, S\'{a}rk\"{o}zy, Selkow, and Szemer\'{e}di~\cite{sarkozy2003number} showed that the number of Hamilton cycles in an $n$-vertex Dirac graph is at least $(cn)^n$ for some constant $c>0$. Later, Cuckler and Kahn~\cite{cuckler2009hamiltonian} proved the same with $c = (1-o(1))/2e$, which is easily seen to be best possible by considering the expected number of Hamilton cycles in random graphs with density slightly above half.

Another way to show robustness of Dirac's theorem is to show that a Dirac graph remains Hamiltonian under random edge removals whp\footnote{We say that a sequence of events $\{\mathcal{E}_n\}_{n\ge 1}$ occurs \textit{with high probability} or in short \textit{whp}, if $\lim_{n\to \infty } \Pr(\mathcal{E}_n)=1$.}. Equivalently, we can say that a sufficiently large random subgraph (also referred to as random sparsification) of a Dirac graph contains a Hamilton cycle whp. To discuss this direction, we first briefly mention classical results on Hamiltonicity in random graphs.

\subsection{Hamiltonicity in random graphs}
For $0\le p=p(n) \le 1$, the binomial random graph, denoted by $G(n,p)$, is a random graph on $n$ vertices where each possible edge is present independently with probability~$p$. Building upon the work of Posa~\cite{posa1976hamiltonian}, Korshunov~\cite{korshunov1976solution}  proved that the threshold for the random graph to contain a Hamilton cycle is $\log n/n$. His result was further sharpened independently by Bollob\'as~\cite{bollobas1984evolution}, Koml{\'o}s and Szemer{\'e}di~\cite{komlos1983limit}. They  proved that, near the Hamiltonicity threshold, the limit (as $n$ tends to infinity) of the probability of $G(n,p)$ being Hamiltonian equals the probability of its minimum degree being at least~$2$. 

For a graph $G$ and $0\le p=p(|V(G)|)\le 1$, we denote by $G_p$ the graph generated by taking each edge of $G$ independently with probability $p$. Thus, $G(n,p)=(K_n)_p$ where $K_n$ denotes the complete graph on $n$ vertices. Krivelevich, Lee, and Sudakov~\cite{krivelevich2014robust} prove that if $G$ is an $n$-vertex Dirac graph, then the property that $G$ is Hamiltonian is retained by $G_p$ whp provided that $p\ge c\log n/n$ for some constant $c>0$. Johanson~\cite{Johanson2020} improved this result to $p\ge (2+o(1))\log n/n$ provided that the minimum degree $\delta (G)\ge (1/2+\epsilon)n$ for any constant $\epsilon>0$.
\begin{theorem}[Robust Dirac's theorem~\cite{krivelevich2014robust}] \label{thm:robust dirac}
There exists an absolute constant $c$ such that the following holds. Suppose $G$ is a Dirac graph on $n$ vertices, and $p\ge c\log n/n$. Then, whp the graph $G_p$ contains a Hamilton cycle.
\end{theorem}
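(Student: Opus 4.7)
The plan is to combine a sprinkling decomposition of the randomness with P\'osa's rotation--extension technique and a booster argument. Write $1-p=(1-p_1)(1-p_2)$ with $p_1,p_2\ge c'\log n/n$, so that $G_p$ has the same distribution as $G_{p_1}\cup G_{p_2}$ for two independent layers. The first layer $G_{p_1}$ will be used to produce a near-Hamiltonian backbone with strong rotation properties, while the second layer $G_{p_2}$ is held in reserve as an independent random pool of ``booster'' edges.

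\textbf{Expansion of $G_{p_1}$.} A Chernoff bound together with a union bound over vertex subsets shows that whp $G_{p_1}$ satisfies: (a) every vertex has degree $\Omega(\log n)$, and (b) every small set $S$ with $|S|\le n/\log n$ has $|N_{G_{p_1}}(S)|\ge 2|S|$. The Dirac hypothesis is essential: since each vertex of $G$ has at least $n/2$ neighbours, the expected degree in $G_{p_1}$ is $\Omega(\log n)$, and a non-expanding small set would force the sampled edges to concentrate abnormally into a thin neighbourhood, an event a routine union bound rules out. Larger sets expand automatically because the expected number of edges they send out is $\Theta(n\log n)$, far above the 2-expansion threshold.

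\textbf{Rotation--extension and closing via $G_{p_2}$.} Apply P\'osa's rotation--extension lemma to $G_{p_1}$: for any longest path $P$, iterated rotations from one endpoint produce a linear-sized set $A$ of possible fresh endpoints, and re-rotating from the other end yields, for each $a\in A$, a linear-sized set $B(a)$ such that every non-edge $ab$ with $b\in B(a)$ is a \emph{booster} --- its addition to $G_{p_1}$ either lengthens $P$ strictly or closes a Hamilton cycle. Using the Dirac hypothesis $|N_G(a)|\ge n/2$ and ensuring that the rotation set sizes can be pushed beyond $n/2$ by the established expansion, inclusion--exclusion forces $|N_G(a)\cap B(a)|=\Omega(n)$ for a positive fraction of $a\in A$, yielding $\Omega(n^2)$ booster edges lying in $E(G)$. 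Each such edge appears independently in $G_{p_2}$ with probability $p_2=\Omega(\log n/n)$, so Chernoff produces a booster whp. Iterating $O(n)$ times (with fresh sprinkled sublayers of $G_{p_2}$ if needed) completes a Hamilton cycle.

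\textbf{Main obstacle.} Dirac graphs need not be highly connected: the union of two near-cliques glued along two vertices is only $2$-connected and possesses small cuts, so strong expansion cannot be inherited from $G$ directly and must instead be manufactured from the random sampling. The most delicate step is verifying that the booster pairs promised by P\'osa's analysis actually lie in $E(G)$: this requires combining the Dirac degree condition with a careful lower bound on rotation set sizes, so that a positive fraction of each rotation endpoint's $G$-neighbourhood falls inside the corresponding rotation target set. Ensuring this while only using $p=\Theta(\log n/n)$ randomness --- which gives each vertex only $\Theta(\log n)$ sampled neighbours, not linearly many --- is exactly where the interplay between the deterministic Dirac structure and the random sparsification is most intricate.
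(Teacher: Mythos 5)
Note that the paper does not prove this theorem; it is stated as a citation to Krivelevich, Lee, and Sudakov~\cite{krivelevich2014robust}. Your outline follows the same general template as that cited proof (two-round exposure, expansion in the first layer, P\'osa rotations, sprinkled boosters), but there is a genuine gap at exactly the step you flag as most delicate.

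The expansion you establish, namely $|N_{G_{p_1}}(S)|\ge 2|S|$ for $|S|\le n/\log n$, only yields P\'osa rotation endpoint sets of size $\Omega(n/\log n)$, not linear in $n$. You then assert that ``the rotation set sizes can be pushed beyond $n/2$ by the established expansion''; this does not follow, and it is false for near-extremal Dirac graphs. Take $G$ to consist of two vertex-disjoint cliques $X,Y$ on $n/2$ vertices each, together with two further vertices $u,v$ joined to each other and to all of $X\cup Y$. This graph on $n+2$ vertices has minimum degree $n/2+1$, hence is Dirac, but $\{u,v\}$ is a $2$-cut. For any nonempty $S\subsetneq X$ we have $N_G(S)\setminus S=(X\setminus S)\cup\{u,v\}$, of size $n/2-|S|+2$, which drops below $|S|$ once $|S|>n/4+1$. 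Since $N_{G_{p_1}}(S)\subseteq N_G(S)$, expansion at linear scale cannot be ``manufactured from the random sampling'' as your last paragraph suggests: if $G$ lacks it, every spanning subgraph of $G$ lacks it too. Hence the rotation endpoint set stalls at size roughly $n/2$ inside one side of the cut, the rotation target sets $B(a)$ are trapped on that same side, and the claimed $\Omega(n^2)$ booster edges inside $E(G)$ never materialise. What is missing is the stability dichotomy on which the actual proof (and, in deterministic form, Lemma~\ref{lem:KLS general} of the present paper) is built: either $G$ is far from the extremal configurations (nearly two cliques, or nearly complete bipartite), in which case rotation sets do reach linear size and your booster count goes through; or $G$ is close to one of them, in which case one argues directly by routing the cycle through the narrow cut and building Hamilton paths inside the two dense pieces. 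Without this case split, the rotation--extension argument as you describe it cannot be carried out uniformly over all Dirac graphs.
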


\Cref{thm:robust dirac} can be interpreted as follows. Starting with $K_n$, choose an arbitrary spanning subgraph $G$ of $K_n$ with the property that the degree of every vertex is at most halved. Following this, generate $G_p$. We have that if $p\ge c\log n/n$, then $G_p$ is Hamiltonian whp. A natural question is whether one can swap the order of these two operations. This question is often phrased in terms of \textit{resilience} of random graphs. The general study of the resilience of different graph properties in the binomial random graph was initiated by Sudakov and Vu~\cite{sudakov2008local}. We say a graph $G$ is $\alpha$-resilient with respect to the property $\mathcal{P}$ if, for any subgraph $H\subseteq G$ such that $d_{H}(v)\le \alpha d_G(v)$ for every $v\in V(G)$ we have that the graph $G-H$ has the property $\mathcal{P}$. Lee and Sudakov~\cite{lee2012dirac} proved that $G(n,p)$ is whp $(1/2-\epsilon)$-resilient for containing Hamilton cycles provided that $np\ge C\log n$ for some constant $C=C(\epsilon)$ that depends on $\epsilon$. Montgomery~\cite{montgomery2019hamiltonicity} improved this result to $np\ge (1+o(1))\log n$ and also proved an analogous theorem for directed binomial random graphs~\cite{montgomerydirectedresilient}. For further results related to the resilience of graph properties of random graphs, see~\cite{alon2010increasing,balogh2011local,ben2011resilience,dellamonica2008resilience,krivelevich2010resilient,lee2012dirac,vskoric2018local}. For related problems on robust Hamiltonicity, we suggest the readers consult the surveys~\cite{frieze2019hamilton,sudakov2017robustness}. 

There have been some recent developments on robust Hamiltonicity problems in hypergraphs, see~\cite{kelly2024optimal,montgomery2024counting}.

\subsection{Hamilton cycle transversals}

A generalization of a graph embedding problem is to find a transversal of a graph in a given collection of graphs (also referred to as family of graphs) on the same vertex set. For a given family $\mathbb{F}=\set{F_1,\ldots,F_m}$ of sets on a common ground set $\Omega$, a set $X\subseteq \Omega$ is called an $\mathbb{F}$-transversal if $X\cap F_i\neq \emptyset$ for every $i\in [m]$. There have been significant studies on finding transversals of various objects; see, e.g., \cite{aharoni2017rainbow,kalai2005topological,pokrovskiy2020}.
For a given collection $\mathbb{G}=\{G_1,\ldots,G_{m}\}$ of graphs on the vertex set $V$, we define a \textit{$\mathbb{G}$-transversal} to be a pair $(E,\phi)$ where $E$ is a set of edges spanned by $V$ and $\phi:E\to [m]$ is a bijection such that $e\in E(G_{\phi (e)})$ for every $e\in E$. We often consider the edges of $G_i$ to have color $i$ and think of $\phi$ as a \textit{coloring} of the graph $(V,E)$. We say that a coloring $\phi$ of a set of edges $E$ is rainbow if no two edges in $E$ have the same color. Thus, informally speaking, $\mathbb{G}$-transversals are rainbow edge sets of size $|\mathbb{G}|$.  

Aharoni, DeVos, Gonz\'alez Hermosillo de la Maza, Montejano, and \v{S}\'{a}mal \cite{aharoni2020rainbow} studied the existence of a transversal of a triangle in a collection of $3$ graphs, each having more than $\alpha n^2$ edges (i.e. an analog of Mantel's theorem). They proved that there exists a constant $c>1/4$ such that if $\alpha\ge c$, then such a transversal exists. In addition, they provided a construction showing that this constant is tight. The above result demonstrates that classical results do not always carry over to the transversal setting in a trivial way. However, Aharoni \cite{aharoni2020rainbow} conjectured that this is the case for Hamilton cycles. Addressing Aharoni's conjecture, Cheng, Wang, and Zhao~\cite{cheng2021rainbow} established an asymptotic version of the classical Dirac's theorem in the transversal setting. Joos and Kim~\cite{Joos2020} proved that Aharoni's conjecture holds. For further results on the existence of transversals, see~\cite{bowtell2025universality,chakraborti2023bandwidth,chakraborti2024hamilton,chakraborti2024transversal,cheng2023rainbow,cheng2026transversals,falgas2024rainbow,gupta2022general,im2025rainbow,montgomery2022transversal,sun2024transversal}. For a graph family $\mathbb{G}=\{G_1,\ldots,G_n\}$ on a common vertex set $V$, a Hamilton $\mathbb{G}$-transversal is a $\mathbb{G}$-transversal $(E,\phi)$ such that $E$ is the edge set of a Hamilton cycle on $V$.
\begin{theorem} [Transversal Dirac's theorem~\cite{Joos2020}] \label{thm:joos kim}
Let $n\ge 3$. Let $\mathbb{G}=\{G_1,\ldots,G_n\}$ be a collection of $n$ Dirac graphs on the same vertex set of size $n$. Then, there exists a Hamilton $\mathbb{G}$-transversal.  
\end{theorem}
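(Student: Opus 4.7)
The plan is to prove \Cref{thm:joos kim} via the \emph{absorption method}, adapted to the transversal setting where each color in $[n]$ must be used exactly once. Fix a small $\epsilon>0$. I would first construct an \emph{absorbing rainbow path} $P_{\text{abs}}$ on a vertex subset $V_0 \subseteq V$ of size $\Theta(\epsilon n)$, using a color set $C_0 \subseteq [n]$ with $|C_0|=|V_0|-1$, having the following property: for every pair $(U,D)$ with $U \subseteq V \setminus V_0$, $D \subseteq [n]\setminus C_0$, and $|U|=|D| \le \epsilon^2 n$, there is a rainbow path on $V_0 \cup U$ with the same endpoints as $P_{\text{abs}}$ using exactly the colors $C_0 \cup D$.

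To build $P_{\text{abs}}$, I would design small \emph{$(v,i)$-absorber} gadgets, one parameterized by each pair $(v,i) \in V\times[n]$. A natural gadget is a triple $(x,y,j)$ with $xv\in E(G_i)$, $vy \in E(G_j)$, and $xy \in E(G_j)$: in the ``off'' state it contributes the edge $xy$ of color $j$, while in the ``on'' state this is replaced by the rainbow $2$-path $x{-}v{-}y$ using colors $i$ and $j$, thereby absorbing $v$ and activating color $i$ while still using color $j$ exactly once. The Dirac condition on the $G_k$'s produces $\Omega(n^3)$ such gadgets per $(v,i)$, so a random-sampling plus concentration argument yields a family of pairwise vertex- and color-disjoint gadgets realizing every pair sufficiently often. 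The same Dirac condition on the untouched colors then lets one stitch these gadgets into a single rainbow path $P_{\text{abs}}$.

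Next, set aside a reservoir $C_r \subseteq [n] \setminus C_0$ of size $\Theta(\epsilon^2 n)$. Using the remaining colors $[n]\setminus(C_0\cup C_r)$, I would build a long rainbow path in $V \setminus V_0$ that covers all but $\le \epsilon^2 n$ vertices. This step adapts P\'osa's rotation--extension method to the rainbow setting: while the path is not long enough, either an endpoint has a neighbor outside the path in an unused color (extension), or a rainbow rotation creates a new endpoint with more flexibility; the Dirac condition on each unused $G_j$ makes both moves succeed with ample choice. Finally, applying the absorbing property of $P_{\text{abs}}$ with the leftover vertices $W$ and unused colors $D$, together with two edges of reservoir colors to join the long path to the endpoints of $P_{\text{abs}}$, produces a Hamilton $\G$-transversal.

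The main obstacle is constructing the rainbow absorber. The rainbow constraint forbids reusing colors, so the many absorber gadgets must be chosen with mutually disjoint vertex sets \emph{and} mutually disjoint color sets, while each individual gadget also consumes a free color as its internal swap color $j$. Arranging this via a random process so that every plausible $(v,i)$-pair is covered many times, and then linking the gadgets into a single rainbow path whose color set is exactly $C_0$, demands delicate probabilistic and structural analysis. Once the absorbing path is in place, the covering and closing steps follow standard rainbow adaptations of the Pósa method.
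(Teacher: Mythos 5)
The paper does not give its own proof of \Cref{thm:joos kim}; it cites Joos and Kim \cite{Joos2020} and instead re-derives (and strengthens) the statement as a corollary of \Cref{thm:main_spread,thm:main_spread_exceptional} via \Cref{thm:main_counting}. That route is genuinely different from yours: the authors build a $O(1/n^2)$-spread probability measure on the set of Hamilton $\G$-transversals using a \emph{vortex} decomposition of both the vertex set and the color set (\Cref{def:vortex,def:vortexabsorber}), an iterative cover-down step (\Cref{lem:cover_down}), a color-absorbing matching $(M_{abs},C_{abs})$ (\Cref{def:absorber}), a robust Dirac theorem for the final chunk (\Cref{thm:robustdirac}), and a separate treatment of $\alpha$-extremal families (\Cref{section:extremal families}). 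Your proposal is a one-shot absorption scheme in the spirit of the original Joos--Kim argument: an absorbing rainbow path built from $(v,i)$-gadgets, a long rainbow path via rainbow P\'osa rotation, then absorption of the leftover vertex/color sets.

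There is, however, a concrete gap in your sketch. The claim that ``the Dirac condition on the $G_k$'s produces $\Omega(n^3)$ such gadgets per $(v,i)$'' is false for extremal families, and in fact the proposed gadget can fail to exist at all. Take every $G_k$ equal to the balanced complete bipartite graph $K_{A,B}$ with $|A|=|B|=n/2$, and take $v\in A$. Then $x\in N_{G_i}(v)\subseteq B$, and any $y$ with $vy\in E(G_j)$ lies in $B$; but $xy\in E(G_j)$ would force $x,y$ on opposite sides, a contradiction. More generally, swapping the edge $xy$ for the two-edge path $x$-$v$-$y$ changes path-length parity, which is impossible inside a bipartite graph with the endpoints $x,y$ fixed. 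So for any family close to the two canonical extremal configurations (union of two cliques, or a balanced complete bipartite graph, and mixtures thereof) your absorber does not exist as described. This is precisely why both the paper at hand and Joos--Kim split into non-extremal and extremal cases; your proposal has no stability/extremal branch. A secondary issue worth flagging: even in the non-extremal regime, ``realizing every pair $(v,i)$ sufficiently often'' with pairwise vertex- and color-disjoint gadgets is not a matter of simple concentration, since there are $n^2$ pairs and only $O(\epsilon n)$ vertices and colors available for the absorber; one needs a bipartite template in the style of robustly matchable bipartite graphs (or the paper's vortex/color-absorber mechanism) to make the ``absorb any $(U,D)$'' property go through.
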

Bradshaw, Halasz, and Stacho~\cite{bradshaw2022} extended the above theorem by showing that any such family actually has at least $(cn/e)^{cn}$ different such transversals for some constant $c\ge 1/68$. In this paper, we improve this result by determining the correct exponent in this count, see \Cref{thm:main_counting}.

On the other hand, Ferber, Han, and Mao~\cite{ferber2022dirac} proved a resilience result for a collection of $n$ graphs on $n$ vertices where each graph is an independent copy of $G(n,p)$ provided that $np=\omega(\log n)$. Namely, they proved the following (in~\cite{ferber2022dirac}, their corresponding theorem is stated to hold for $p=\omega(\log n/n)$ but they actually prove the following stronger result).
\begin{theorem}[\cite{ferber2022dirac}] \label{thm:ferber}
For every $\epsilon >0$, there exists an absolute constant $c=c(\epsilon)$ such that the following holds. Let $\mathbb{G}=\{G_1,\ldots,G_n\}$ be a collection of $n$ independent copies of $G(n,p)$ with $p\ge c\log n/n$ on the vertex set $V$ of size $n$. For every $i\in [n]$, let $H_i\subseteq G_i$ be such that $d_{H_i}(v)\ge (1/2 + \epsilon) d_{G_i}(v)$ for all $v\in V$. Then, whp there exists a Hamilton $\{H_1,\ldots,H_n\}$-transversal.
\end{theorem}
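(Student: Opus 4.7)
The plan is to combine the two-round exposure (sprinkling) technique from robust Hamiltonicity in random graphs with a rainbow absorption argument. First I would establish pseudorandom properties: by Chernoff's inequality and a union bound over $i\in[n]$ and polynomially many events, whp each $G_i$ has all degrees $(1\pm o(1))np$, satisfies $e_{G_i}(A,B)=(1\pm o(1))|A||B|p$ for every pair of linear-sized disjoint vertex sets, and has many common neighbors between every pair of vertices. These properties are inherited by every valid $H_i$ with $\epsilon$-dependent constants; in particular $\delta(H_i)\geq (1/2+\epsilon/2)np$ whp.

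Next, couple $G_i = G_i^{(1)} \cup G_i^{(2)}$ with $G_i^{(1)}\sim G(n,(1-\eta)p)$ and $G_i^{(2)}\sim G(n,\eta p)$ (up to the usual correction) for a small constant $\eta=\eta(\epsilon)>0$. Set $H_i^{(j)}:=H_i\cap G_i^{(j)}$. For $\eta$ small enough the family $\{H_i^{(1)}\}$ still satisfies the resilience condition with parameter $\epsilon/3$, and thus inherits the pseudorandom properties above; the second round $\{G_i^{(2)}\}$ furnishes fresh, conditionally independent edges for the closing step.

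Then I would reserve a small index set $I\subseteq [n]$ with $|I|=o(n)$ and use only the graphs $\{H_i^{(1)}\}_{i\in I}$ to build a rainbow absorber $A$: a short rainbow path together with swap operations that allow, given any small vertex set $S\subseteq V\setminus V(A)$, rearrangement into a rainbow path on $V(A)\cup S$ still using only colors in $I$. Using the remaining indices $[n]\setminus I$, I would grow a rainbow path $P$ through all but $o(n)$ vertices by P\'osa rotation-extension adapted to the transversal setting: after many rotations the set of achievable endpoints is linear, and a Hall-type bipartite matching between endpoint-pairs and still-unused colors from $[n]\setminus I$ ensures a rainbow extension is always available. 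Finally, absorb the leftover vertices via $A$, and close the path to a Hamilton cycle using the edges of $\bigcup_i G_i^{(2)}$ exposed in the second round; since these are independent of the first round, finding a closing edge with the required color reduces to a routine sprinkling argument.

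The main obstacle is carrying the rainbow constraint through the rotation and absorption steps. Classical Hamiltonicity proofs freely add and remove edges, but here each newly inserted edge must use a color not currently on the path, and each color must be used exactly once at the end. The natural encoding is as a bipartite matching between cycle edges and colors, and verifying Hall's condition at every stage, in particular that no set of $k$ as-yet-undetermined edges has fewer than $k$ color-options, is what consumes the pseudorandomness margin and dictates the choice of $c(\epsilon)$.
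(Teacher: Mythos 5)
The paper does not prove this theorem: it is cited from Ferber, Han, and Mao~\cite{ferber2022dirac}, and the paper explicitly describes their approach in the paragraph following the statement --- an ``ingenious reduction'' to $(1/2-\epsilon)$-resilience of Hamiltonicity in random \emph{directed} graphs, followed by an application of Montgomery's result~\cite{montgomerydirectedresilient}. The key idea of that reduction is that a directed Hamilton cycle has each vertex as the tail of exactly one arc, so if one encodes ``$uv\in H_u$'' as the arc $(u,v)$, then a directed Hamilton cycle automatically yields the required bijection between edges and colors, at no extra bookkeeping cost. Your proposal is therefore a genuinely different route: a direct attack via pseudorandomness, sprinkling, P\'osa rotation, and a rainbow absorber.

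The difficulty you flag --- carrying the rainbow constraint through rotation and absorption --- is exactly the obstruction that the digraph reduction circumvents, and your proposal does not actually overcome it. Two concrete gaps. First, the absorber is specified as absorbing a small \emph{vertex} set $S$ while ``still using only colors in $I$,'' but at the end of the process you must have used \emph{every} color in $[n]$ exactly once; if the long path $P$ happens to leave some subset $I'\subseteq I$ of colors unused and $|I'|$ does not match $|S|$ (plus the closing edges), the absorber as described cannot repair the deficit. A transversal absorber must absorb vertices and colors jointly --- compare the absorber in \Cref{def:absorber}--\Cref{def:vortexabsorber} of this paper, which is designed precisely so that for \emph{every} residual color set of the right size a valid recoloring of $M_{abs}$ exists. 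Second, the Hall-condition framing is not well posed in the rotation phase: the set of ``as-yet-undetermined edges'' is not fixed in advance but changes with every rotation, and each rotation simultaneously frees the color on the broken edge and consumes a color on the new chord. You would need an invariant guaranteeing that a linear fraction of colors remains usable at every endpoint of a rotated path, and that the eventual color assignment is a bijection, neither of which is argued. These are not cosmetic issues; they are the reason the original authors reached for the digraph reduction rather than a direct rotation-extension argument.
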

Note that the above theorem implies that if $\mathbb{G}=\{G_1,\ldots,G_n\}$ is a collection of graphs on $n$ vertices of minimum degree $(1/2+\epsilon)n$, and $\mathbb{F}=\{F_1,\ldots,F_n\}$ where $F_i=(G_i)_p$ for some $p\ge c\log n/n$, then whp there is a Hamilton $\mathbb{F}$-transversal. Indeed, this statement can be derived from \Cref{thm:ferber} by taking $H_i= G(n,p)\cap F_i$. The proof of Ferber, Han, and Mao~\cite{ferber2022dirac} is based on an ingenious reduction of the given problem to the $(1/2-\epsilon)$-resilience of Hamiltonicity of random directed graphs problem and then applying the corresponding result of Montgomery~\cite{montgomerydirectedresilient}.

\subsection{Main results} In this paper, we prove a number of results on the existence of a Hamilton cycle transversal in a collection of random subgraphs of Dirac graphs in various settings. We start with the following two extensions of Dirac's theorem in the transversal setting, both of which are strengthenings of \Cref{thm:robust dirac}. For a graph family $\mathbb{G}=\{G_1,\ldots,G_n\}$, we denote the common vertex set by $V(\mathbb{G})$. For a graph family $\mathbb{G}=\{G_1,\ldots,G_n\}$ and a graph $H$ on $V(\mathbb{G})$, we denote by $\mathbb{G}\cap H$ the graph family $\{G_1\cap H,\ldots,G_n\cap H\}$.

\begin{theorem}\label{thm:main_same} 
There exists an absolute constant $c$ such that the following holds. Suppose that $\mathbb{G}=\{G_1,\ldots,G_n\}$ is a collection of Dirac graphs on $n$ vertices, and let $p\ge c\log n/n$. Then, whp there is a Hamilton $\mathbb{G}\cap G(n,p)$-transversal.
\end{theorem}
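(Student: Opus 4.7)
The plan is to prove the theorem via the spread-measure framework, following the strategy hinted at in the abstract. Specifically, I would construct a probability distribution $\mu$ on the collection $\mathcal{H}$ of Hamilton $\mathbb{G}$-transversals (each identified with its edge set in $E(K_n)$) such that $\mu$ is $(C/n)$-spread, i.e.\ for every $S\subseteq E(K_n)$,
\[
\mu\bigl(\{H\in\mathcal{H}:S\subseteq H\}\bigr)\le (C/n)^{|S|}.
\]
Once such a measure is in hand, the Frankston--Kahn--Narayanan--Park spreading theorem immediately yields the conclusion: since any Hamilton transversal has exactly $n$ edges and $\log|E(K_n)|=O(\log n)$, a random subgraph $G(n,p)$ with $p\ge c\log n/n$ contains (the edge set of) a Hamilton $\mathbb{G}$-transversal whp. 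This reduces the task to the combinatorial construction of $\mu$.

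To construct $\mu$, I propose a randomized algorithm that outputs a Hamilton $\mathbb{G}$-transversal in three phases. First, reserve a random small set of $\epsilon n$ colors and a small vertex reservoir, and use them together with \Cref{thm:joos kim} (or a local variant thereof) to build a distributively robust rainbow absorbing gadget capable of swallowing any small set of leftover vertices. Second, build a long rainbow path on almost all vertices by revealing the remaining colors one at a time in a uniformly random order; at each step, the Dirac degree condition on the corresponding $G_i$, combined with a standard degree-tracking/reservoir argument, guarantees $\Omega(n)$ feasible extensions of the current endpoint, so the committed edge is one of $\Omega(n)$ roughly equally likely choices. Third, use the preconstructed absorber to incorporate the leftover vertices and close the path into a Hamilton cycle while keeping the edge-coloring rainbow. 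The $(C/n)$-spread bound is then verified edge by edge: a specified edge $e$ assigned color $i$ can enter the output only when color $i$ is processed, and at that moment $e$ competes with $\Omega(n)$ alternatives, giving a per-edge probability of $O(1/n)$; multiplying these nearly independent bounds across $S$ yields $(C/n)^{|S|}$.

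The most delicate part will be the absorption phase. We need to design a random absorbing structure that (i) can handle an arbitrary small leftover set, and (ii) itself satisfies a spread condition strong enough that the overall distribution on Hamilton transversals remains $(C/n)$-spread. This essentially forces a Montgomery-style distributive absorber adapted to the transversal setting, in which the absorbing gadgets are sampled from a large family of interchangeable candidates so that no individual edge is overused. A secondary technical obstacle is showing that after each greedy step the family of graphs restricted to the yet-uncovered vertices still satisfies a Dirac-type condition; controlling this simultaneously for all $n$ color classes, and in a way that survives conditioning on the random color order, is what makes the spread analysis nontrivial. Given these ingredients, the spread measure and hence \Cref{thm:main_same} follow.
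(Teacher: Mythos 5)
At the level of framework, your approach matches the paper's: \Cref{thm:main_same} is indeed deduced from the Frankston--Kahn--Narayanan--Park threshold theorem (\Cref{thm:thresholds}) together with a spread measure on Hamilton $\mathbb{G}$-transversals. Concretely, the paper constructs a $c/n^2$-spread measure on \emph{colored} Hamilton transversals (\Cref{thm:main_spread}); forgetting colors pushes this forward to a $c/n$-spread measure on the underlying uncolored edge sets, since each edge admits at most $n$ color choices, and applying \Cref{thm:thresholds} with ground set $E(K_n)$ (so $\log|Z| = O(\log n)$) gives the $p\ge c\log n/n$ threshold. So your target of an $O(1/n)$-spread measure and the reduction you propose are the correct ones.

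The genuine gap is that your construction does not account for extremal families, and your central estimate fails there. You assert that the Dirac degree condition on $G_i$ guarantees $\Omega(n)$ feasible rainbow extensions of the current path endpoint when color $i$ comes up, but this is false in general: if $G_1,\ldots,G_{n-1}$ are all $K_{n/2,n/2}$ on a bipartition $A\cup\overline A$ and $G_n$ consists of cliques on $A$ and $\overline A$ joined by a perfect matching, then by parity \emph{every} Hamilton $\mathbb{G}$-transversal must spend color $n$ on one of the $n/2$ matching edges, and when color $n$ arrives in your random ordering the current endpoint may be compatible with at most one such edge (or none). This is precisely the exceptional regime $e_C(\mathbb{G},A)\le 0.1n^2$ that the paper isolates. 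The paper's proof is therefore a stability dichotomy: for non-extremal families it builds the spread measure by iterative absorption — a random vortex $V_1\supset\cdots\supset V_N$ plus a color-absorber (\Cref{lem:vortex}), with \Cref{thm:robustdirac} closing the final layer and \Cref{lem:property preserve} certifying that degree and non-extremality conditions survive the random subsampling you worry about — while for exceptional families it instead proves \Cref{thm:main_spread_exceptional}, which supplies a spread measure on transversal Hamilton \emph{paths} joining the endpoints of a fixed anchor edge $e$; in the proof of \Cref{thm:main_same} this anchor is found by a two-round exposure of $G(n,p)$ (split as $G(n,p/2)\cup G(n,p/2)$). Your Montgomery-style greedy-plus-absorber scheme could plausibly replace the vortex in the non-extremal branch, but without the extremal/exceptional case distinction — which is where the real content lies — the ``$\Omega(n)$ feasible extensions'' step, and hence the spread bound, breaks down.
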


\begin{theorem}\label{thm:dirac_same}
There exists an absolute constant $c$ such that the following holds. Suppose $G$ is a Dirac graph on $n$ vertices, and $p\ge c\log n/n^2$. Let $\mathbb{G}=\{G_1,\ldots,G_n\}$ be a family of graphs where each $G_i$ is independently distributed as $G_p$ for $i\in [n]$. Then, whp there is a Hamilton $\mathbb{G}$-transversal.
\end{theorem}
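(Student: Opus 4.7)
The plan is to cast this theorem as a random cover problem on a natural product ground set and then apply the Frankston-Kahn-Narayanan-Park (FKNP) spread theorem, taking as input a spread measure on Hamilton cycle transversals of a Dirac graph which is the main technical tool introduced in this paper. Let $X := E(G) \times [n]$, and for each Hamilton cycle $H$ in $G$ and each bijection $\phi \colon E(H) \to [n]$ set $T(H,\phi) := \{(e,\phi(e)) : e \in E(H)\} \subseteq X$; write $\mathcal{T}$ for the collection of all such sets. Every element of $\mathcal{T}$ has size exactly $n$, and under the model of the theorem the random set $R := \{(e,i) \in X : e \in E(G_i)\}$ contains each pair $(e,i)$ independently with probability $p$, since the $G_i$ are independent $G_p$-subgraphs of $G$. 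A Hamilton $\mathbb{G}$-transversal exists precisely when $R \supseteq T$ for some $T \in \mathcal{T}$.

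The next step is to invoke the spread distribution on $\mathcal{T}$: for some absolute constant $C$ (and any Dirac graph $G$ on $n$ vertices), there is a probability measure $\mu$ on $\mathcal{T}$ which is $(C/n^2)$-spread, meaning $\mu(\{T \in \mathcal{T} : S \subseteq T\}) \le (C/n^2)^{|S|}$ for every $S \subseteq X$. Granting this, the FKNP theorem produces an absolute constant $K$ such that whenever $p \ge K \cdot (C/n^2) \cdot \log n$, the random set $R$ contains an element of $\mathcal{T}$ with high probability (here we use that each element of $\mathcal{T}$ has exactly $n$ points). Setting $c := KC$ finishes the proof.

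The main obstacle is the construction of $\mu$, which is in fact the central technical contribution of the paper and is merely plugged in for this theorem. Heuristically, the target density $C/n^2$ is correct because a typical transversal uses $n$ of the $\Theta(n^2)$ edges of $G$, so any fixed edge lies in roughly a $1/n$ fraction of the Hamilton cycles, and independently a uniformly random bijection assigns any specific color to that edge with probability $1/n$. Making this bound uniform over all subsets $S \subseteq X$ of colored edges is the hard part; natural routes are either to combine sharp Cuckler-Kahn-style counts on Hamilton cycles through a prescribed edge set in a Dirac graph with a uniform random edge-coloring, or to sample the transversal via an iterative absorption/switching procedure that keeps spreadness under control at each step.
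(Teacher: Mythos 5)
Your proposal is correct and matches the paper's own route: the paper derives \Cref{thm:dirac_same} by taking $G_1=\cdots=G_n=G$ in \Cref{thm:main_distinct}, whose proof in the non-exceptional case is precisely your steps---view the ground set as colored edges $E(G)\times[n]$, invoke the $O(1/n^2)$-spread measure on Hamilton $\mathbb{G}$-transversals supplied by \Cref{thm:main_spread}, and apply the FKNP threshold theorem (\Cref{thm:thresholds}). The one point you leave implicit is that the constant family $\{G,\ldots,G\}$ is non-exceptional, which is the hypothesis of \Cref{thm:main_spread}; this is a routine check, since a Dirac graph $G$ on $n$ vertices has at least $n^2/4$ edges, so for every half-set $A$ and odd $C\subset[n]$ one gets $e_C(\mathbb{G},A)=|C|\bigl(e_G(A)+e_G(\overline{A})\bigr)+(n-|C|)e_G(A,\overline{A})=\Omega(n^2)$.
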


In \Cref{thm:main_same}, we intersect each graph of a given family with the same random graph, while in \Cref{thm:dirac_same}, we consider a family that consists of $n$ independent random subgraphs of a Dirac graph. In both theorems, the lower bound on $p$ is best possible up to the value of $c$. This is because in the first case, we need $G(n,p)$ to have minimum degree at least $2$, and in the second one, we need each graph to span at least $1$ edge. At this point, one might hope for a combined generalization of the above two theorems. 

\begin{question} \label{qs:ideal theorem}
Does there exist a constant $c$ such that the following holds? 
Let $\mathbb{G}=\{G_1,\ldots,G_n\}$ be a collection of Dirac graphs on $n$ vertices, and $p\ge c\log n/n^2$. For $i\in [n]$, let $F_i=(G_i)_p$ and let $\mathbb{F}=\{F_1,\ldots,F_n\}$. Then, whp there exists a Hamilton $\mathbb{F}$-transversal.
\end{question}
Unfortunately, this question turns out to be false in general. To see that, assume that $n$ is even, consider a set $A\subseteq V(\mathbb{G})$ of size $n/2$. Then for every $i\in [n-1]$, let $G_i$ be the complete bipartite graph with the partition $A\cup \overline{A}$ (where $\overline{A}$ denotes the set $V(\mathbb{G})\setminus A$). Also, let $G_n$ be the union of two disjoint copies of $K_{n/2}$ induced by $A$ and $\overline{A}$ connected by a perfect matching. For this family, the best one can hope for is $p=\omega(1/n)$. Indeed, every Hamilton $\mathbb{G}$-transversal $(H,\phi)$ spans an edge of $G_i$ from $A$ to $\overline{A}$ for $i\in [n-1]$ (we say that $(H,\phi)$ spans a specific edge $e$ of $G_i$ if $e\in H$ and $\phi(e)=i$). Since every Hamilton cycle on $V(\mathbb{G})$ spans an even number of edges from $A$ to $\overline{A}$, it must be the case that $(H,\phi)$ spans an edge of $G_n$ from $A$ to $\overline{A}$. Every Hamilton $\mathbb{F}$-transversal inherits this property. The condition $p=\omega(1/n)$ is needed to ensure that $F_n\subseteq G_n$ spans an edge $A$ to $\overline{A}$. Notice that there is a parity issue in this particular example. It turns out that this is essentially the only type of issue that can go wrong. 

More generally, for a given graph family $\mathbb{G}=\{G_1,\ldots,G_n\}$ on $n$ vertices, a set of indices $C\subseteq [n]$ of odd size, and a set of vertices $A\subseteq V(\mathbb{G})$, every Hamilton $\mathbb{G}$-transversal $(H,\phi)$ satisfies one of the following. Either $(H,\phi)$ spans an edge of $G_i$ not in\footnote{Here and henceforward, we slightly abuse the notation and think of an edge as both a $2$ element set and a $2$ element ordered set. Thus by $A\times B$, we may denote the set of edges with one endpoint in $A$ and the other in $B$.} $A\times \overline{A}$ for some $i\in C$, or there exists a set of odd size $C'\subseteq \overline{C}$ (hence of size at least 1) so that $(H,\phi)$ spans an edge of $G_i$ in $A\times \overline{A}$ for every $i\in C\cup C'$. Thus, there exists a Hamilton $\mathbb{F}$-transversal only if  $\bigcup_{i\in C} E(F_i)$ has an edge not in $A\times \overline{A}$  or $\bigcup_{i\in \overline{C}} F_i$ has an edge in $A\times \overline{A}$. 

This motivates the following definitions of $e_C(\mathbb{G},A)$ and $r(\mathbb{G})$. For a graph $G$ and $A,B\subseteq V(G)$, let $e_G(A)$ denote the number of edges in the subgraph induced by $A$, and let $e_G(A,B)$ be the number of ordered pairs $(a,b)\in A\times B$ such that $ab$ is an edge of $G$. We call a set $A\subseteq V(G)$ \textit{a half-set} if $(|V(G)|-1)/2\le |A|\le (|V(G)|+1)/2$. For a given graph family $\mathbb{G}=\{G_1,\ldots,G_n\}$ on $n$ vertices, a set of indices $C\subseteq [n]$, and a set of vertices $A\subseteq V(\mathbb{G})$, we define
\begin{align*}
    e_C(\mathbb{G},A)&:= \sum_{i\in C} \left( e_{G_i}(A)+e_{G_i}(\ccA) \right) + \sum_{i\in \ccC} e_{G_i}(A,\ccA). \\
    r(\mathbb{G})&:=\min_{\substack{A\subseteq V(\mathbb{G}),C\subseteq [n] \\ A \; \text{is half-set} \\ |C| \; \text{is odd}}} e_C(\mathbb{G},A).
\end{align*}
Also, let $A(\mathbb{G})$ and $C(\mathbb{G})$ be such that $A(\mathbb{G})$ is a half-set, $|C(\mathbb{G})|$ is odd, and $ r(\mathbb{G})= e_{C(\mathbb{G})}(\mathbb{G},A(\mathbb{G}))$. In case of multiple options for $A(\mathbb{G})$ and $C(\mathbb{G})$, choose one of them arbitrarily. 

In the definition of $r(\mathbb{G})$, we could have minimized over all subsets $A$ of $V(\mathbb{G})$ instead of just all half-subsets. However, as seen shortly, the value of $r(\mathbb{G})$ comes into play only when it is small, say of the order $O(n^2/\log n)$, and such a value can be attained only by sets $A$ of size $n/2$. Thus, we choose to minimize over half-sets $A$. 

We remark that $r(\mathbb{G})\ge n/2$. Furthermore, if $n\ge 3$ is odd, then $r(\mathbb{G})\ge n^2/4$. Indeed, for a fixed half-set $A\subseteq V(\mathbb{G})$ and $C\subseteq [n]$ the following hold (since $e_C(\mathbb{G},A)=e_C(\mathbb{G},\ccA)$ we may assume that $|A| < n/2 < |\ccA|$). For $i\in [n]$, every vertex in $G_i$ has a neighbor in $\ccA$. Therefore, $e_{G_i}(\ccA) \ge |\ccA|/2\ge n/4$ and  $e_{G_i}(A,\ccA)\ge |A|\ge n/4$. It follows that $e_C(\mathbb{G},A)\ge n^2/4$, and, hence, that $r(\mathbb{G})\ge n^2/4$. This observation, together with the following result, implies that the statement in \Cref{qs:ideal theorem} holds when $n$ is odd. 
\begin{theorem}\label{thm:main_distinct} 
There exists a constant $c$ such that the following holds. Let $\mathbb{G}=\{G_1,\ldots,G_n\}$ be a collection of Dirac graphs on $n$ vertices, and $p\ge c\log n/n^2$. Let $F_i=(G_i)_p$ for $i\in [n]$, and let $\mathbb{F}=\{F_1,\ldots,F_n\}$. Then,
$$\lim_{n\to \infty}\Pr(\text{there exists a Hamilton  $\mathbb{F}$-transversal}) =1-\lim_{n\to \infty }e^{-p\cdot r(\mathbb{G})}.$$
\end{theorem}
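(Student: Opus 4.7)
Let me define the set $S^* \subseteq \bigcup_{i=1}^n E(G_i) \times \{i\}$ of critical edge-color pairs: a pair $(e,i)$ lies in $S^*$ when either $i\in C(\mathbb{G})$ and $e$ is non-crossing with respect to the partition $(A(\mathbb{G}), \overline{A(\mathbb{G})})$, or $i\in \overline{C(\mathbb{G})}$ and $e$ is crossing; by construction $|S^*| = r(\mathbb{G})$. The parity argument in the paragraph preceding the theorem shows that every Hamilton $\mathbb{F}$-transversal must use at least one pair from $S^*$, since otherwise every color $i\in C(\mathbb{G})$ would contribute a crossing edge and every color $i\in \overline{C(\mathbb{G})}$ a non-crossing edge, giving exactly $|C(\mathbb{G})|$ crossing edges, which is odd and contradicts the parity of the cut of any Hamilton cycle. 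Hence the event $B=\{S^*\cap \mathbb{F}=\emptyset\}$, of probability $(1-p)^{r(\mathbb{G})}$, certifies non-existence, giving $\Pr(\text{no transversal}) \ge (1-p)^{r(\mathbb{G})}$; since $p\to 0$, this translates to $\lim \Pr(\text{no transversal})\ge \lim e^{-p\cdot r(\mathbb{G})}$, which is one of the two directions.

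For the matching upper bound, I would decompose
\[
\Pr(\text{no transversal}) \le (1-p)^{r(\mathbb{G})} + \sum_{(e,i)\in S^*} p\cdot q_{(e,i)},
\]
where $q_{(e,i)}$ is the probability that the family $\mathbb{F}\cup\{(e,i)\}$ admits no Hamilton transversal using $(e,i)$; the independence of the event $\{(e,i)\in \mathbb{F}\}$ from the presence in $\mathbb{F}$ of every other edge-color pair justifies extracting the factor $p$. Since $|S^*|\le O(n^3)$ and $p\le 1$, it suffices to establish $q_{(e,i)}\le n^{-C}$ for some sufficiently large constant $C=C(c)$, which would make the sum $o(1)$ and conclude $\Pr(\text{no transversal}) = (1-p)^{r(\mathbb{G})}+o(1) = e^{-p\cdot r(\mathbb{G})}(1+o(1))+o(1)$.

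To bound $q_{(e,i)}$, the main technical step is to construct, for each pair $(e,i)$ with $e\in E(G_i)$, a probability measure $\mu_{(e,i)}$ supported on the Hamilton $\mathbb{G}$-transversals that use $(e,i)$ which is $(K/n^2)$-spread, meaning $\mu_{(e,i)}(\{H:T\subseteq H\})\le (K/n^2)^{|T|}$ for every set $T$ of edge-color pairs. Given such $\mu_{(e,i)}$, the fractional Kahn--Kalai theorem of Park and Pham (in the quantitative form of Alweiss, Lovett, Wu, and Zhang) applied to the random family $\mathbb{F}\cup\{(e,i)\}$, in which each pair other than $(e,i)$ is independently present with probability $p\ge c\log n/n^2$, yields that some transversal in the support of $\mu_{(e,i)}$ lies inside $\mathbb{F}\cup\{(e,i)\}$ with probability at least $1-n^{-C}$, once $c$ is chosen large in terms of $K$ and $C$; this directly bounds $q_{(e,i)}\le n^{-C}$.

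The principal obstacle is the construction of the spread measure $\mu_{(e,i)}$, as signalled in the abstract. I expect the construction to adapt the entropy-based and switching arguments used by Cuckler and Kahn to count Hamilton cycles in a single Dirac graph, and by Joos--Kim and Bradshaw--Halasz--Stacho in the transversal setting, by sampling a Hamilton transversal through a randomized procedure that reveals edges and colors step by step, then bounding the marginal probability of any fixed set $T$ of edge-color pairs being jointly chosen via a conditional entropy or switching estimate. The constraint that $(e,i)$ lies in the sampled transversal can be accommodated by seeding the procedure with this pair and verifying that the resulting conditional distribution retains $(K/n^2)$-spread away from pairs incident to $e$ or using color $i$, possibly at the cost of a larger constant $K$.
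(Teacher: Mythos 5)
Your overall architecture is sound and is, at a high level, the same as the paper's: the parity obstruction associated with $(A(\mathbb{G}),C(\mathbb{G}))$ gives the lower bound $\Pr(\text{no transversal})\ge(1-p)^{r(\mathbb{G})}$, and the upper bound reduces to constructing an $O(1/n^2)$-spread measure and invoking the spread/threshold theorem. The definition of $S^*$ and the verification that $|S^*|=r(\mathbb{G})$ and that every transversal must hit $S^*$ are exactly right. But there are two real deviations from the paper, one stylistic and one substantive.

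The stylistic one is the way you split the upper bound. You write
\begin{align*}
\Pr(\text{no transversal})\le(1-p)^{r(\mathbb{G})}+\sum_{(e,i)\in S^*}p\cdot q_{(e,i)},
\end{align*}
which is a legitimate union bound, but it forces you to prove $q_{(e,i)}\le n^{-C}$ with $C$ large (since $|S^*|=r(\mathbb{G})$ can be as big as $\Theta(n^3)$), which in turn requires a polynomially quantitative form of the Frankston--Kahn--Narayanan--Park theorem. The paper avoids this: it conditions on the event $S^*\cap\mathbb{F}\neq\emptyset$, picks one pair $(e,i)$ measurably (e.g.\ the first in a fixed order), and observes that the edge--color pairs outside $S^*$ — which are exactly the edges of the auxiliary family $\mathbb{G}^{-i}(A,C)$ appearing in Theorem~\ref{thm:main_spread_exceptional} — are independent of that conditioning; then a single application of Theorem~\ref{thm:thresholds} with failure probability $o(1)$ suffices. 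Your version works but needs a heavier black box, and you should say explicitly which quantitative spread lemma you are using and check its failure probability beats $n^{-3}$.

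The substantive gap is the spread measure itself, which you acknowledge but resolve in a direction the paper does not take. You ask for, simultaneously for all $(e,i)$, a $(K/n^2)$-spread measure on Hamilton $\mathbb{G}$-transversals that use $(e,i)$, and you suggest obtaining it via entropy/switching arguments in the spirit of Cuckler--Kahn, Joos--Kim, and Bradshaw--Halasz--Stacho. The paper does something weaker and different: it splits into a non-exceptional case, where it proves an \emph{unconditional} $O(1/n^2)$-spread measure on all Hamilton transversals (Theorem~\ref{thm:main_spread}), and an exceptional case, where it proves a spread measure on $\mathbb{G}^{-i}$-transversal Hamilton paths joining the endpoints of a prescribed $e$ (Theorem~\ref{thm:main_spread_exceptional}); both are built by iterative absorption with a color-absorber (Sections 3--5), not by entropy or switching. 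If you want to keep your unified union-bound framing, you would need the seeded spread measure even in the non-exceptional case, which the paper never constructs; the cleaner fix is to use the unconditional measure there and reserve the seeded one for the exceptional case, exactly mirroring the paper's case split. As written, the proposal does not supply the main technical ingredient and gestures at a construction method that is not the one that the paper shows works.
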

Thus, if $r(\mathbb{G})=\omega(n^2/\log n)$ and $p\ge c\log n/n^2$ then there exists a Hamilton cycle which is $\mathbb{F}$-transversal whp. Note that one needs $p = \Omega(\log n/n^2)$ to ensure that none of $F_i$ is empty whp. On the other hand, if $r(\mathbb{G})=O(n^2/\log n)$, then $ 1-\lim_{n\to \infty }e^{-p\cdot r(\mathbb{G})}$ equals the probability that at least one of the edges that contribute to $e_C(\mathbb{G},A)$ for the pair $A,C$ that minimizes $e_C(\mathbb{G},A)$ is spanned by $\mathbb{F}$. As demonstrated by the example given earlier, this is a necessary condition for the existence of a Hamilton  $\mathbb{F}$-transversal. As $r(\mathbb{G})\ge n/2$ in general, we have that there is a Hamilton $\mathbb{F}$-transversal whp whenever $p=\omega(1/n)$. 

\Cref{thm:main_distinct} is stronger than what one can directly deduce from \Cref{thm:ferber}. \Cref{thm:dirac_same} follows from \Cref{thm:main_distinct} by taking $G=G_1=\dots=G_n$. 

\subsection{Spread measures and thresholds} Our proofs rely on constructing a spread measure (see \Cref{def:spreadmu}) on the set of transversal Hamilton cycles and then applying a result of Frankston, Kahn, Narayanan, and Park~\cite{frankston2021thresholds}. This specific result was used to prove a fractional version of the Kahn-Kalai expectation threshold vs. threshold conjecture~\cite{kahn2007thresholds}. This conjecture was later resolved by Park and Pham~\cite{pp2022}.

\begin{definition}\label{def:spreadmu}
Let $Z$ be a finite ground set and $\mathcal{H}\subseteq \mathcal{P}(Z)$ a nonempty collection of subsets of $Z$. Let $\mu$ be a probability measure on $\mathcal{H}$. For $q>0$, we say that $\mu$ is $q$-spread if for every $S\subseteq Z$, we have
$$\mu(\{H\in \mathcal{H}:S\subseteq H\}) \le q^{|S|}.$$
\end{definition}
For a finite set $Z$ and $0\le p\le 1$, we denote by $Z(p)$ the random subset of $Z$ where each element of $Z$ is present with probability $p$ independently.

\begin{theorem}[Threshold result; see Theorem~1.6 in ~\cite{frankston2021thresholds}] \label{thm:thresholds}
There exists a constant $C$ such that the following holds. Consider a non-empty ground set $Z$ and let $\mathcal{H}\subseteq \mathcal{P}(Z)$. Suppose that there exists a $q$-spread probability measure on $\mathcal{H}$.  If $p\ge \min\{1,Cq\log |Z|\}$, then $Z(p)$ contains an element of $\mathcal{H}$ as a subset with probability tending to $1$ as $|Z|\to \infty$. 
\end{theorem}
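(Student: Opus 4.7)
The plan is to establish Theorem~\ref{thm:thresholds} via the bootstrapping argument of Frankston, Kahn, Narayanan, and Park, following the streamlined presentation used in the Park--Pham proof of the Kahn--Kalai conjecture. Fix $p \geq Cq\log|Z|$ for a large absolute constant $C$ and write $n = |Z|$. The goal is to show that with high probability the random set $R := Z(p)$ contains some element of $\mathcal{H}$ as a subset.

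The main idea is a coupling argument. Sample $H^* \sim \mu$ and $R \sim Z(p)$ independently. If $H^* \setminus R = \emptyset$ we are done. Otherwise, I would iteratively ``repair'' the uncovered portion $U := H^* \setminus R$ by switching to a new element $H' \in \mathcal{H}$ whose agreement with $H^*$ on $R$ is essentially preserved but whose uncovered mass is at most $|U|/2$. To keep the steps probabilistically independent, split $R$ into $k = O(\log n)$ independent layers $R_1,\ldots,R_k$, each distributed roughly as $Z(p/k)$, and use the $i$-th layer to perform the $i$-th repair.

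The key lemma driving the iteration is a propagation of spreadness: for any ``committed'' set $T \subseteq Z$, let $\mu_T$ denote $\mu$ conditioned on $T \subseteq H$. The $q$-spread property implies that for every $F \subseteq Z \setminus T$, the $\mu_T$-measure of $\{H : F \subseteq H\}$ is bounded by $q^{|F|}/\mu(\{H : T \subseteq H\})$. Combining this with a Markov/Chernoff-type concentration, one shows that a typical sample $H' \sim \mu_T$ satisfies $|H' \setminus (T \cup R_i)| \leq |U|/2$, where $U$ is the current uncovered portion. Iterating, after $k = O(\log n)$ steps the uncovered part is empty, so $R$ contains some $H \in \mathcal{H}$.

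The main obstacle will be carefully managing the conditioning so that each step's fresh layer $R_i$ is independent of the outcomes of the previous rounds, and so that the $q$-spread bound remains applicable after the conditioning imposed by those outcomes. A secondary concern is ensuring that the failure probability at each step is $o(1/\log n)$, so that a union bound over the $O(\log n)$ rounds still delivers the whp conclusion; this dictates choosing $C$ sufficiently large that $p/k$ stays comfortably above the spread threshold at every scale. Since this is a verbatim restatement of a theorem from~\cite{frankston2021thresholds}, I would ultimately just invoke it as a black box in the applications that follow.
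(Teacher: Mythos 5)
The paper does not prove this statement; it is Theorem~1.6 of Frankston--Kahn--Narayanan--Park, quoted verbatim and invoked as a black box. Your final sentence — that you would simply cite the result — is therefore exactly the paper's treatment, and the proposal is correct in that sense.

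Your accompanying sketch of the FKNP fragment/coupling argument is a reasonable high-level summary, but one technical point deserves flagging. You propose propagating spreadness by passing to the conditional measure $\mu_T$ and bounding $\mu_T(\{H : F\subseteq H\}) \le q^{|F|}/\mu(\{H : T\subseteq H\})$. This bound is correct as stated, but it is useless without a \emph{lower} bound on $\mu(\{H : T\subseteq H\})$, and the $q$-spread hypothesis gives only upper bounds on such events. FKNP sidestep this: rather than resampling from a conditioned measure, they work with a fixed $H\sim\mu$ and a fresh random layer $W$, and prove an averaging/counting lemma showing that typically there \emph{exists} some $H'\in\mathcal{H}$ with $H'\subseteq H\cup W$ whose uncovered part $H'\setminus W$ is at most half the size of $H\setminus W$. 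The spread bound enters by controlling, over all $H$ and all candidate fragments, how many bad configurations there can be. This is a genuine difference from your "condition and resample" scheme, and your version would not close without additional input. Since you ultimately cite the theorem rather than prove it, this does not affect the correctness of the proposal as a whole.
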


We say that a graph family $\mathbb{G}=\{G_1,\ldots,G_n\}$ on $n$ vertices is \textit{exceptional} if $r(\mathbb{G})\le 0.1 n^2$. We remark that we could use any sufficiently small positive constant instead of $0.1$ in this definition.

\begin{theorem}\label{thm:main_spread}
There exists a constant $c$ such that the following holds. For every non-exceptional collection $\mathbb{G}=\{G_1,\ldots,G_n\}$ of Dirac graphs on $n$ vertices, there is a $(c/n^2)$-spread probability measure on the set of Hamilton $\mathbb{G}$-transversals.
\end{theorem}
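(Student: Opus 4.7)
The plan is to build the measure $\mu$ via a randomized construction of a Hamilton $\mathbb{G}$-transversal whose output distribution has the desired spread property. Identify each Hamilton $\mathbb{G}$-transversal $(H,\phi)$ with the subset $\{(e,\phi(e)) : e \in E(H)\}$ of the ground set $Z = \{(e,i) : i \in [n], e \in E(G_i)\}$, so $|Z| = O(n^3)$ and every element of our support has exactly $n$ ``colored edges.'' The target rate $c/n^2$ matches the ratio $n/|Z|$ that a perfectly uniform distribution would achieve, and serves as the marginal target at each sampling step.

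My approach samples the transversal in two phases. In Phase I, I would draw a uniformly random ordering $\pi$ of the colors and, processing colors one by one, greedily build a rainbow linear forest whose edges have the prescribed colors, at each step picking an edge uniformly at random from a set of $\Omega(n^2)$ feasible candidates. Here \emph{feasible} means that the edge lies in $G_{\pi(j)}$, preserves the linear-forest structure, and leaves the partial configuration extendable to a Hamilton $\mathbb{G}$-transversal. The Dirac condition yields $\Theta(n^2)$ colored edges per step, and one must show that only a negligible fraction are infeasible. After Phase I, $n-o(n)$ colors are used and the uncovered structure is a short path system. In Phase II, I would use the remaining reserved colors together with an absorbing-type argument to close the path system into a Hamilton cycle. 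The non-exceptional hypothesis $e_C(\mathbb{G},A) > 0.1 n^2$ for every half-set $A$ and odd color set $C$ is what prevents the parity/bipartition obstruction described before \Cref{qs:ideal theorem}: it guarantees, during the closure step, the existence of $\Omega(n^2)$ edges of the correct color on the appropriate side of every potential cut.

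The main obstacle will be establishing the joint spread inequality, not merely marginal bounds on individual edges. Given $S \subseteq Z$, partition $S$ by color: each element $(e,i) \in S$ can only enter the transversal when color $i$ is processed, so the event $\{S \subseteq (H,\phi)\}$ factors across $|S|$ sampling steps, one per color appearing in $S$. If at every such step the pool of feasible edges has size $\Omega(n^2)$ uniformly over the past, then a product estimate yields
\[
\mu\bigl(\{(H,\phi) : S \subseteq (E(H),\phi)\}\bigr) \;\leq\; \prod_{(e,i) \in S} \frac{C}{n^2} \;=\; \Bigl(\frac{C}{n^2}\Bigr)^{|S|}
\]
for an absolute constant $C$, exactly the required spread bound. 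Certifying the $\Omega(n^2)$ lower bound on feasible edges \emph{uniformly over all histories and at every step} is where most of the work lies, and it is the crucial place where the non-exceptional condition is invoked: for adversarial histories that look like the bipartite parity example, only non-exceptionality rules out a collapse to $O(n)$ choices. I expect a careful randomized reservoir/switching argument to handle Phase I feasibility, while the Phase II closure analysis, combining an absorption gadget with the non-exceptional condition, will form the technical core.
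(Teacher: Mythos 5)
Your greedy random construction does not work as stated, for a well-known and fundamental reason. The claim that ``at each step [you can] pick an edge uniformly at random from a set of $\Omega(n^2)$ feasible candidates'' is false once the partial linear forest has covered most of the vertex set. Concretely, if after $m$ steps the linear forest has $k_m$ paths, then $m + k_m$ vertices are covered and $n - m - k_m$ are uncovered; an edge preserving the linear-forest structure must join two uncovered vertices, an endpoint to an uncovered vertex, or two endpoints. The number of such pairs is $O\bigl((n-m-k_m)^2 + k_m(n-m-k_m) + k_m^2\bigr)$, and since $m + k_m \le n$ forces $k_m \le n - m$, this is $O\bigl((n-m)^2\bigr)$. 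For $m = n - o(n)$ this is $o(n^2)$, so your uniform-over-history bound cannot hold through Phase I as described, and the product estimate you write down at the end collapses precisely where it matters. The Dirac condition gives you $\Omega(n^2)$ \emph{available} colored edges, but not $\Omega(n^2)$ edges compatible with a nearly-complete partial structure — these are very different things, and the gap between them is the entire technical difficulty of the problem. This is exactly why the paper does \emph{not} run a single global greedy process.

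Beyond this, two further components are missing. First, your Phase II closure is entirely hand-waved: the absorber in the paper (Definitions~\ref{def:absorber}--\ref{def:vortexabsorber}) has to be constructed \emph{before} sampling begins, sampled from a spread distribution, and then charged against the spread accounting — these forced closure choices are a genuine source of probability mass, and without a precise accounting the product bound does not follow. Second, the proof must treat $\alpha$-extremal but non-exceptional families separately. When every $G_i$ is close to complete bipartite on a common half-set $A$, the edges inside $A$ and inside $\overline A$ have density $O(\alpha n^2)$, so the pool of ``correct-type'' edges at many steps is $O(\alpha n^2)$, not $\Omega(n^2)$. Non-exceptionality only gives you $\Omega(n^2)$ edges in one aggregate quantity $e_C(\mathbb{G},A)$; it does not make the greedy pool large at each step. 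The paper handles this via a structural dichotomy: for non-extremal families it builds a nested vortex $(V_1,\ldots,V_N)$, $(C_1,\ldots,C_N)$ so that each cover-down step is performed in a shrinking but still linearly large window (Lemmas~\ref{lem:vortex} and~\ref{lem:cover_down}), ensuring the $\Omega(|V_i|^2)$ pool is genuine at scale $i$ and then telescoping the spread; for extremal families it performs an explicit clean-up (Lemmas~\ref{lem:cleanupbipartite},~\ref{lem:cleanupcliques}) reducing to rainbow Hamilton paths in near-complete bipartite families (Lemma~\ref{lem:bipartite}), where the structure is rigid enough that a more direct rotation/switching argument succeeds. Your proposal contains none of this machinery, and without it the spread inequality cannot be certified.
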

To prove \Cref{thm:main_spread}, we adapt the iterative absorption technique introduced by K\"{u}hn and Osthus, and Knox, K\"{u}hn, and Osthus for packing edge-disjoint Hamilton cycles in graphs \cite{knox2015edge, kuhn2013hamilton}. The combination of the threshold result and the iterative absorption technique has already been used to prove a plethora of results, including determining (up to a multiplicative constant) the threshold for the existence of a Steiner triple system and the threshold for the existence of a Latin square~\cite{jain2022optimal,  kang2023thresholds, kang2022perfect, keevash2022optimal, pham2022toolkit, sah2023threshold}.

Note that if $\mathbb{G}=\{G_1,\ldots,G_n\}$ is exceptional, then $A(\mathbb{G})$ has size exactly $n/2$ (because $r(\mathbb{G})\ge n^2/4$ when $n$ is odd). In this case, we have the following theorem in place of \Cref{thm:main_spread}.

\begin{theorem}\label{thm:main_spread_exceptional}
There exists a constant $c$ such that the following holds. Let $\mathbb{G}=\{G_1,\ldots,G_n\}$ be an exceptional collection of Dirac graphs on $n$ vertices, and let $A=A(\mathbb{G})$ and $C=C(\mathbb{G})$. Let $(e,i)$ be such that $e\in E(G_i)$  and either $i\in C$ and $e\notin A\times \overline{A}$ or $i\in \overline{C}$ and $e\in A\times \overline{A}$. Also let $\mathbb{G}^{-i}=\{G_1,\ldots, G_{i-1},G_{i+1},\ldots,G_n\}$. Then, there exists a $(c/n^2)$-spread probability measure on the set of $\mathbb{G}^{-i}$-transversals $(E,\phi)$ where $E$ spans a Hamilton path on $V(\mathbb{G})$ that joins the endpoints of $e$.
\end{theorem}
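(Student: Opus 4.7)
The plan is to adapt the $c/n^2$-spread construction that proves \Cref{thm:main_spread} to the Hamilton path setting forced by an exceptional family, respecting the bipartite-like color structure induced by the pair $(A, C) = (A(\mathbb{G}), C(\mathbb{G}))$.

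First, I would exploit exceptionality: since $e_C(\mathbb{G}, A) \le 0.1 n^2$, for the majority of colors $j \in C$ almost every edge of $G_j$ lies in $A \times \overline{A}$, and for the majority of $j \in \overline{C}$ almost every edge lies inside $A$ or inside $\overline{A}$. Combined with $|A|=n/2$ and $\delta(G_j) \ge n/2$, the \emph{aligned} restrictions $G_j^\star := G_j \cap (A \times \overline{A})$ for $j \in C \setminus \{i\}$ are typically bipartite Dirac on $(A, \overline{A})$, and $G_j^\star := G_j \cap (E(A) \cup E(\overline{A}))$ for $j \in \overline{C} \setminus \{i\}$ typically split into two dense halves. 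I would then check that the parity constraints of the aligned family match a Hamilton path joining the endpoints of $e$: if $i \in C$ the $|C|-1$ aligned $C$-colors give an even number of crossings (since $|C|$ is odd), matching endpoints in the same half, while if $i \in \overline{C}$ the $|C|$ crossings are odd, matching endpoints in different halves.

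Next, I would construct the spread measure by mimicking the random process used to prove \Cref{thm:main_spread}. Starting at $a_1$, the process extends the current aligned rainbow path vertex-by-vertex, at each step selecting an unused color $j$ of the correct type (crossing for $C$, within for $\overline{C}$) and a next vertex $v$ so that the edge is present in $G_j^\star$. The minimum-degree bounds on the $G_j^\star$ guarantee $\Omega(n^2)$ choices for the $(j,v)$ pair at essentially every step, which is exactly the count needed to bound the measure of any fixed $S$ by $(c/n^2)^{|S|}$. A suitably constructed absorbing path, planted in advance, would guarantee termination precisely at $a_2$ and exhaust all remaining vertices and colors.

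The main obstacle will be designing an absorbing structure that is simultaneously compatible with the aligned color assignment (each absorbing swap must use a color of the correct crossing/within type) and flexible enough to absorb any residual vertex--color pair while still terminating at $a_2$. In the cycle version behind \Cref{thm:main_spread} one enjoys rotational freedom; here both path endpoints are prescribed and the color types are rigidly constrained by exceptionality, and one must also accommodate the few colors whose misaligned edges are not negligible. These additional rigidities, and the bookkeeping needed to preserve the spread property while handling them, are where most of the technical work will lie.
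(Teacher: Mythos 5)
You have the right structural picture in your first two paragraphs: exceptionality forces $|A|=n/2$, most colors in $C$ are essentially bipartite across $A\times\overline{A}$ and most colors in $\overline{C}$ live inside the two halves, and the parity bookkeeping ($|C|-1$ crossings versus $|C|$ crossings depending on whether $i\in C$ or $i\in\overline{C}$) is exactly right. The paper performs this same reduction: it defines $C'\subset C$ to be the $(1-\epsilon)$-bipartite colors, shows $|C\setminus C'|=O(1)$ and $|C|\geq 0.8n$, and reduces to constructing a rainbow Hamilton path in an aligned, almost-complete bipartite family (\Cref{thm:superextremal}, \Cref{lem:bipartite}).

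Where the proposal diverges, and where the gap lies, is in the construction of the spread measure. You propose to ``mimic the random process used to prove \Cref{thm:main_spread}'' and describe a greedy vertex-by-vertex extension that simultaneously chooses the next vertex and the color of the new edge, with a pre-planted absorbing path to close at $a_2$. This is \emph{not} what the paper does, and for a reason: the vortex/cover-down/absorber machinery of Sections~3--4 crucially relies on the family being non-extremal (the property-preserving lemma \Cref{lem:property preserve} and the robust Dirac theorem \Cref{thm:robustdirac} both require this), so that machinery is unavailable here. More importantly, greedy simultaneous path-extension-and-coloring has a genuine endgame problem under rigid color-type constraints, which you yourself flag as ``where most of the technical work will lie'' but do not resolve. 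An absorbing path that must terminate at a prescribed vertex, respect crossing/within color types, and absorb any residual color set while preserving $O(1/n^2)$-spreadness is not a routine object, and it is not clear your sketch can be completed.

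The paper's \Cref{lem:bipartite} sidesteps this by deferring the coloring. It builds a long \emph{uncolored} path $P_1$ of length $\approx 1.7n$ using only edges that lie in $\ge 1.99n$ of the $G_j$'s (so $P_1$ is $O(1/n)$-spread over edge sets, not colored edges), then a short \emph{rainbow} path $P_2$ that uses up the $\approx 0.05n$ colors which appear on few edges of $P_1$, then a third path $P_3$ closing the Hamilton path at $b$. Only at the end are the edges of $P_1\cup P_3$ assigned colors, via a perfect matching in an auxiliary bipartite graph between high-multiplicity colors $C_1$ and $E(P_1)\cup E(P_3)$; this auxiliary graph has minimum degree $\ge 1.6n$ on each side, so the matching exists and can itself be sampled in an $O(1/n)$-spread way (\Cref{claim:ham_spread}). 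The ``absorption'' is thus the coloring flexibility of the high-multiplicity edges in $P_1$, not a planted sub-path. This is the missing idea in your proposal: decouple path-building from color-assignment, build the bulk of the path from edges that see almost all colors, and do the rainbow assignment at the end as a matching problem.
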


We now derive \Cref{thm:main_distinct,thm:main_same} from \Cref{thm:main_spread,thm:main_spread_exceptional}.

\begin{proof}[{\bf Proof of \Cref{thm:main_distinct}}] We let $c$ be sufficiently large relative to the constants obtained from \Cref{thm:main_spread,thm:main_spread_exceptional}. If $\mathbb{G}$ is not exceptional, then \Cref{thm:main_distinct} follows from \Cref{thm:thresholds,thm:main_spread}. In the case that $\mathbb{G}$ is exceptional, then let $A,C$ be as in the statement of \Cref{thm:main_spread_exceptional} and first reveal the edges in the sets $E_1=\bigcup_{i\in C} \big(E(F_i)\setminus (A\times \overline{A})\big)$ and $E_2= \bigcup_{i\in \overline{C}} \big(E(F_i)\cap (A\times \overline{A})\big)$. In the event that $E_1\cup E_2$ is empty, as discussed earlier before the definition of $r(\mathbb{G})$, there is no Hamilton $\mathbb{F}$-transversal. Thus, assume that $E_1\cup E_2$ is non-empty. Then, there exists $i\in C$ and $e\in E(F_i)$  such that $e\in E_1$ or $i\in \overline{C}$ and $e\in E(F_i)$  such that $e\in E_2$. In this case, \Cref{thm:thresholds,thm:main_spread_exceptional} imply that whp there exists a $\mathbb{F}^{-i}$-transversal whose edge set spans a Hamilton path $P$ which joins the endpoints of $e\in F_i$, where $\mathbb{F}^{-i}:=\{F_1,\ldots,F_{i-1},F_{i+1},\ldots,F_n\}$. In such an event, $P+e$ with the corresponding coloring gives a Hamilton $\mathbb{F}$-transversal, as desired. It follows that 
$$\Pr(\text{there exists a Hamilton  $\mathbb{F}$-transversal})= 1-\Pr(E_1\cup E_2=\emptyset) - o(1) =1-e^{-p\cdot r(\mathbb{G})} - o(1).$$
\end{proof}

\begin{proof}[{\bf Proof of \Cref{thm:main_same}}] 
Similar to the previous proof, if $\mathbb{G}$ is not exceptional, then \Cref{thm:main_same} follows from \Cref{thm:thresholds,thm:main_spread}. In the case that $\mathbb{G}$ is exceptional, then let $A,C$ be as in the statement of \Cref{thm:main_spread_exceptional}. Then, as $\mathbb{G}$ is exceptional, $n$ is even and $|C|$ is odd. In particular, there exists an index $i\notin C$ such that  $|E(G_i)\cap (A\times \overline{A})|\ge n/2$. Finally, let $H_1, H_2$ be two independent random graphs, both distributed as $G(n,p/2)$. Then $H_1,H_2$ and $G(n,p)$ can be coupled such that $H_1\cup H_2\subseteq G(n,p)$. Now first, reveal the edges in $H_1$. Then, whp there exists an edge $e\in E(G_i)\cap (A\times \overline{A})$ that belongs to $H_1$. After this, reveal the edges of $H_2$. In doing so, \Cref{thm:thresholds,thm:main_spread_exceptional} imply that whp there exists a $\mathbb{G}^{-i}\cap H_2$-transversal whose edge set spans a Hamilton path $P$ which joins the endpoints of $e$, where $\mathbb{G}^{-i}:=\{G_1,\ldots,G_{i-1},G_{i+1},\ldots,G_n\}$. In such an event, $P+e$ with the corresponding coloring gives a Hamilton $\mathbb{G}\cap G(n,p)$-transversal, as desired.
\end{proof}

We next show how the spread results of this subsection (i.e., \Cref{thm:main_spread,thm:main_spread_exceptional}) can be used to prove other robust Hamiltonicity results. For example, \Cref{thm:main_spread,thm:main_spread_exceptional} imply the following counting result that simultaneously extends \Cref{thm:joos kim} and the counting result on Hamilton cycles in a Dirac graph.
\begin{corollary}\label{thm:main_counting}
There exists an absolute constant $C>0$ such that the following holds for every $n\ge 3$. If $\mathbb{G}=\{G_1,\ldots,G_n\}$ is a collection of Dirac graphs on a vertex set of size $n$, then there are $(Cn)^{2n}$ different Hamilton $\mathbb{G}$-transversals.  
\end{corollary}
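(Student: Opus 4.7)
The plan is to obtain \Cref{thm:main_counting} directly from the spread measures in \Cref{thm:main_spread,thm:main_spread_exceptional}, via the simple observation that if $\mu$ is a $q$-spread probability measure on a family $\mathcal{H}$ whose members all have size $s$ as subsets of the ground set, then applying the spread condition with $S$ equal to an individual $H \in \mathcal{H}$ yields $\mu(\{H\}) \le q^{s}$, hence $|\mathcal{H}| \ge q^{-s}$. I identify each Hamilton $\mathbb{G}$-transversal $(E,\phi)$ with the $n$-element subset $\{(e,\phi(e)) : e \in E\}$ of $\{(e,i) : e \in E(G_i)\}$, which is the setting of both spread theorems.

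If $\mathbb{G}$ is not exceptional, I would apply \Cref{thm:main_spread} directly: the $(c/n^2)$-spread measure it produces forces at least $(n^2/c)^n = (n/\sqrt{c})^{2n}$ Hamilton $\mathbb{G}$-transversals, giving the conclusion with $C = 1/\sqrt{c}$. If $\mathbb{G}$ is exceptional, I first verify that a valid pair $(e,i)$ exists to feed into \Cref{thm:main_spread_exceptional}. Exceptionality forces $n$ even and $|C(\mathbb{G})|$ odd, so $\overline{C(\mathbb{G})}$ is nonempty; any $G_j$ with $j \in \overline{C(\mathbb{G})}$ is Dirac and hence connected, so it contains an edge across the bipartition $(A(\mathbb{G}),\overline{A(\mathbb{G})})$, yielding the required $(e,i)$. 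Applying \Cref{thm:main_spread_exceptional} then furnishes a $(c/n^2)$-spread measure on Hamilton-path $\mathbb{G}^{-i}$-transversals joining the endpoints of $e$; each such transversal has size $n-1$, so there are at least $(n^2/c)^{n-1}$ of them, and each extends injectively (by appending $(e,i)$) to a distinct Hamilton cycle $\mathbb{G}$-transversal.

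To obtain a single absolute constant, I would set $C$ to be the minimum of $1/\sqrt{c}$ and $\inf_{n \ge 3} n^{-1/n}\,c^{-(n-1)/(2n)}$; the latter infimum is strictly positive, tends to $1/\sqrt{c}$ as $n \to \infty$, and only needs checking on a finite initial range of small $n$. There is essentially no hard step here, as the real work is done by the spread theorems. The only mild bookkeeping issue is the loss of a factor $n^2$ in the exceptional case, incurred by working with a Hamilton path transversal of $\mathbb{G}^{-i}$ rather than a Hamilton cycle transversal of $\mathbb{G}$; this merely shrinks $C$ by a constant factor and does not affect the optimal $(Cn)^{2n}$ scaling.
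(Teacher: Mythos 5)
Your proposal is correct and uses the same approach as the paper: applying the spread condition of \Cref{thm:main_spread,thm:main_spread_exceptional} to a singleton $\{H\}$ to bound $\mu(\{H\})$ and hence lower-bound $|\mathcal{H}|$. You are slightly more explicit than the paper — which simply asserts the bound $(c/n^2)^{n-1}$ covering both cases at once — in separating the non-exceptional and exceptional cases, verifying that a valid pair $(e,i)$ exists when $\mathbb{G}$ is exceptional, and noting the injectivity of the path-to-cycle extension, but the underlying argument is the same.
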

This is best possible up to the constant $C$ as there may be at most $(n!)^2<n^{2n}$ many Hamilton $\mathbb{G}$-transversals.

\begin{proof}[{\bf Proof of \Cref{thm:main_counting}}]
\Cref{thm:main_spread,thm:main_spread_exceptional} give that there exists a probability measure $\mu$ on the set of Hamilton transversals that assigns to any fixed such cycle a value of at most $(c/n^2)^{n-1}$ for some constant $c$. Thus, there exist at least $(n^2/c)^{n-1}\ge (Cn)^{2n}$ many Hamilton $\mathbb{G}$-transversals for some constant $C>0$. 
\end{proof}

Another way to extend Dirac's theorem in a robust way is to show that a Dirac graph contains many edge-disjoint Hamilton cycles. Nash-Williams \cite{nash1970} showed that this number is linear. His result was finally improved to $\lfloor \frac{n-2}{8} \rfloor$ by Csaba, K{\"u}hn, Lo, Osthus, and Treglown~\cite{csaba2016} for sufficiently large $n$. This bound of $\lfloor \frac{n-2}{8} \rfloor$ is tight, and this follows from a construction of Nash-Williams of a Dirac graph with no $r$-regular subgraph for $r>\frac{n-2}{4}$. 
Extending the arguments given in \Cref{thm:main_spread,thm:main_spread_exceptional}, one can also prove the following robust version of \Cref{thm:joos kim}.

\begin{theorem}\label{thm:main_distinct_packing} 
There exists a constant $\zeta>0$ such that the following holds. Let $\mathbb{G}=\{G_1,\ldots,G_n\}$ be a collection of Dirac graphs on $n$ vertices. Then there exist at least $\min\{\zeta n^2,r(\mathbb{G})\}$ pairwise edge-disjoint Hamilton $\mathbb{G}$-transversals. 
\end{theorem}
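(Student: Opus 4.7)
The plan is to build the transversals iteratively, at each step using one of the spread measures from \Cref{thm:main_spread,thm:main_spread_exceptional} as a random selector. Set $k^\ast := \min\{\zeta n^2, r(\mathbb{G})\}$. Suppose we have already produced edge-disjoint transversals $T_1, \ldots, T_{t-1}$, and let $\mathbb{G}^{(t)}$ be obtained from $\mathbb{G}$ by deleting from each $G_i$ the color-$i$ edge appearing in each $T_s$, $s < t$. If $\mathbb{G}^{(t)}$ is non-exceptional, we apply \Cref{thm:main_spread} to produce a $c/n^2$-spread measure on its Hamilton transversals and sample $T_t$ from it. If $\mathbb{G}^{(t)}$ is exceptional, we select any edge $(e,i)$ contributing to the bottleneck $r(\mathbb{G}^{(t)}) = e_{C(\mathbb{G}^{(t)})}(\mathbb{G}^{(t)}, A(\mathbb{G}^{(t)}))$ that has not yet been used, apply \Cref{thm:main_spread_exceptional} to produce a spread measure on Hamilton path $\mathbb{G}^{(t),-i}$-transversals joining the endpoints of $e$, sample such a path, and close with $e$ to obtain $T_t$. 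Edge-disjointness of $T_1,\ldots,T_t$ is automatic from the deletion step.

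For the iteration to succeed for every $t \leq k^\ast$, we must ensure $\mathbb{G}^{(t)}$ still satisfies the hypotheses of the appropriate spread measure theorem: each $G_i^{(t)}$ should have minimum degree at least $(1/2 - \epsilon)n$, and the exceptional/non-exceptional dichotomy should evolve predictably. Fix $v \in V$ and $i \in [n]$, and let $X_s^{v,i}$ be the indicator that $T_s$ contains a color-$i$ edge incident to $v$. The spread property yields $\mathbb{E}[X_s^{v,i} \mid \mathcal{F}_{s-1}] \leq d_{G_i^{(s-1)}}(v) \cdot c/n^2 \leq c/n$, so Freedman's inequality applied to the martingale $\sum_{s \leq k^\ast} (X_s^{v,i} - \mathbb{E}[X_s^{v,i} \mid \mathcal{F}_{s-1}])$ (whose predictable quadratic variation is $O(\zeta n)$) shows that the total color-$i$ degree loss at $v$ exceeds $\epsilon n$ with probability at most $e^{-\Omega(n)}$, provided $\zeta = \zeta(\epsilon,c)$ is sufficiently small. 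A union bound over the $n^2$ pairs $(v,i)$ establishes the degree invariant whp. An analogous martingale argument, applied to $e_C(\mathbb{G}^{(t)}, A)$ for each half-set $A$ and odd $C$, shows that in the non-exceptional regime $r(\mathbb{G}^{(t)})$ decreases by $O(\zeta n^2)$ over the whole process and stays above $0.1n^2$; in the exceptional regime, our selection deterministically consumes exactly one critical edge per round, so the process runs for $r(\mathbb{G})$ rounds before the bottleneck is exhausted.

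The main obstacle is that both spread measure results are stated for strictly Dirac families, whereas our intermediate families $\mathbb{G}^{(t)}$ are only $(1/2 - \epsilon)$-Dirac. We will verify that the proofs of \Cref{thm:main_spread,thm:main_spread_exceptional} extend to $(1/2 - \epsilon)$-Dirac families for a sufficiently small absolute constant $\epsilon > 0$: the iterative absorption construction they rely on operates with a constant amount of slack at each stage, and this slack is enough to absorb a small degradation in the minimum degree. Combining this extension with the concentration and union bounds above produces $k^\ast$ pairwise edge-disjoint Hamilton $\mathbb{G}$-transversals whp, matching (up to the constant $\zeta$) the upper bound $r(\mathbb{G})$ that follows because every Hamilton $\mathbb{G}$-transversal must use at least one of the $r(\mathbb{G})$ critical edges.
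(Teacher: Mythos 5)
Your outline for the \emph{non-extremal} case matches the paper's proof almost exactly: iterate, delete used edges, resample from the spread measure, and use concentration to argue the minimum degree can only drop by $O(\zeta n)$ over $\zeta n^2$ rounds. Crucially, the paper does not merely ``verify that the spread theorem extends'' to slightly sub-Dirac families --- it states and proves that extension as a standalone result, namely \Cref{thm:main_spread_non_extremal}, which replaces the $n/2$ hypothesis by $(1/2-\eta)n$ from the outset precisely so that this deduction goes through. Your invocation of Freedman's inequality is more machinery than needed (the paper settles for a Chernoff-type domination by $\mathrm{Binom}(\zeta n^2, c/n)$ since $\Pr(X_s^{v,i}=1\mid\mathcal F_{s-1})\le c/n$ uniformly), but it is not wrong.

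The real gap is in your treatment of the extremal/exceptional case, where your plan diverges from the paper's and the missing verification is nontrivial. You assert that \Cref{thm:main_spread_exceptional} extends to $(1/2-\epsilon)$-Dirac families, but this is never checked, and the paper deliberately does not take that route: instead, \Cref{thm:extremalsimple} and \Cref{thm:superextremal} carry a separate packing statement whose proof is an explicit greedy construction --- first build $\zeta n^2$ pairwise edge-disjoint ``path skeletons'' (elements of $\mathcal P(\mathbb G)$ or $\mathcal P'(\mathbb G)$) via the clean-up lemmas (\Cref{lem:cleanupbipartite}, \Cref{lem:cleanupcliques}), then extend each to a Hamilton transversal while avoiding at most $2\zeta n^2$ forbidden edges per vertex. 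Your plan also silently relies on the assertion that each sampled transversal ``consumes exactly one critical edge per round.'' This is true in the paper's construction because the Hamilton path is built entirely inside $\mathbb G^{-i}(A,C)$, whose edge set is by definition disjoint from the set of critical edges contributing to $r(\mathbb G)$; but you would still need to say why your sampled transversal never stumbles onto a second critical edge, and you would need to control how the minimizing pair $(A(\mathbb G^{(t)}), C(\mathbb G^{(t)}))$ evolves as edges are deleted --- neither of which is addressed. Finally, you do not observe that a family cannot flip from non-exceptional to exceptional under deletion of $O(\zeta n^2)$ edges (which justifies the dichotomy staying fixed across iterations), although this follows from the $0.1n^2$ margin in the definition of exceptional.
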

In particular, if $n$ is sufficiently large ($n > 1/(2\zeta)$), then there exist $n/2$ pairwise edge-disjoint Hamilton $\mathbb{G}$-transversals. This bound is optimal by considering the extremal example preceding the definition of $r(\mathbb{G})$.

\subsection{Organization}
The remainder of this paper is organized as follows. In the next subsection, we introduce some notation used throughout this paper. In \Cref{sec:outline}, we outline a brief proof sketch of our main results. In \Cref{section:vortex}, we develop some preliminary tools (including vortex and absorber) needed to prove our main results. The subsequent two sections contain the main proofs (i.e., the proofs of \Cref{thm:main_spread,thm:main_spread_exceptional}), and they are separated to deal with the cases depending on whether the family is close to an extremal family or not (see~\Cref{def:extremal family} for the definition). We prove \Cref{thm:main_distinct_packing} in parallel to \Cref{thm:main_spread,thm:main_spread_exceptional}. Finally, \Cref{section:property preserve} contains the proof of a technical lemma from \Cref{sec:outline}.

\subsection{Notation}
We write $[n]$ to denote the set $\set{1,\ldots,n}$ and write $[m,n]$ to denote the set $\set{m,m+1,\ldots,n}$. For a set $X$, we denote by $\binom{X}{2}$ the collection of all subsets of $X$ of size~$2$. For two sets $X,Y$, their symmetric difference is denoted by $X\triangle Y := (X\cup Y)\setminus (X\cap Y)$. For three numbers $a,b,c$, we write $a=b\pm c$ to mean that $b-c\le a \le b+c$. We use the usual hierarchy notation $\alpha \ll \beta$ to mean that there is a non-decreasing function ${f:(0,1]\rightarrow (0,1]}$ such that $\alpha \le f(\beta)$. When there are more than two constants in a hierarchy, they are chosen from right to left. We omit the rounding signs since they are not crucial for our arguments.

We use standard graph theoretic notations. Consider a graph $G$. We denote the vertex set of $G$ by $V(G)$ and the edge set of $G$ by $E(G)$. For a vertex $v\in V(G)$, we denote the neighborhood of $v$ by $N_G(v)$ and the degree of $v$ by $d_G(v)$. We denote by $\delta(G)$ the minimum degree of $G$. For a vertex $v\in V(G)$ and a set $U\subseteq V(G)$, we denote by $d_G(v,U)$ the number of neighbors of $v$ in the set $U$. For a pair of vertices $u,v\in V(G)$, their co-degree $d_G(u,v)$ denotes the size of their common neighborhood $(N_G(u)\cap N_G(v))\setminus \{u,v\}$. For $U\subseteq V(G)$, we write $N_G(U) = \cup_{u\in U} N_G(u)$. We often drop the subscripts when the graph $G$ is clear from the context. For $A\subseteq V(G)$, we denote by $G[A]$ the subgraph induced by $A$. For a set $V$ of vertices (and a set $E$ of edges), we say $G$ \textit{covers} $V$ if $V\subseteq V(G)$ (and $G$ \textit{covers} $E$ if $E\subseteq E(G)$). For an edge $e\in \binom{V(G)}{2}$, we denote by $G+e$ the graph induced by the edge set $E(G)\cup \{e\}$. For two graphs $G_1$ and $G_2$ on the same vertex set, we denote by $G_1\cup G_2$ (and $G_1\cap G_2$) the graph induced by the edge set $E(G_1)\cup E(G_2)$ (and $E(G_1)\cap E(G_2)$). 

Consider a graph family $\mathbb{G}$ indexed by the elements of a set $C$ on a common vertex set $V$. For sets $S\subseteq V$, $T\subseteq C$, we denote by $\mathbb{G}[S]$ the graph family $\{G_i[S]:i\in C\}$, by $\mathbb{G}_T$ the graph family $\{G_i:i\in C\}$, and by $\mathbb{G}_T[S]$ the graph family $\{G_i[S]:i\in T\}$. We also say that $\mathbb{G}_T[S]$ is induced by the pair $(S,T)$. For $T\subseteq C$, an edge-colored graph $H$ on $V$ is called \textit{$T$-rainbow} if the edges of $H$ receive distinct colors from $T$. An edge-colored graph $H$ is called \textit{rainbow} if $H$ is $C$-rainbow. Recall that for a set $X$, we let $X(p)$ be a random subset of $X$ where each element belongs to $X(p)$ with probability $p$ independently. For $0\le p=p(|V|)\le 1$, and $0\le q=q(|C|)\le 1$, we denote by $\mathbb{G}_{C(q)}[V(p)]$ the random graph family that is induced by the (random) pair $(V(p),C(q))$.

\section{Proof Outline and a robust Dirac's theorem} \label{sec:outline}
As mentioned in the introduction, our proof of \Cref{thm:main_spread} adapts the method of iterative absorption to construct an $O(1/n^2)$-spread probability measure on the set of Hamilton transversals. To demonstrate it, first, consider the case where one wants to build an $O(1/n)$-spread probability measure on the set of Hamilton cycles of an $n$-vertex graph $G$ with minimum degree $(1/2 +\eta)n$ for some $\eta>0$.

Let $0 < 1/n \ll 1/L \ll \gamma \ll \beta \ll \eta$. Randomly partition the vertex set $V(G)$ into a sequence of sets $V_1,\ldots,V_N$ with $|V_{i+1}|\approx \beta |V_i|$ for each $i\in [N-1]$, where $N$ is chosen so that $|V_N| \in [L,2\beta^{-1} L]$. Standard concentration inequalities yield that with probability at least $9/10$, every vertex $v\in V_{i-1}\cup V_i$ has at least $(1/2 + \eta/2)|V_i|$ neighbors in $V_i$ for each $i\in [N]$. Using this minimum degree condition, at step $i<N$, starting with a (potentially empty) matching $M_i$ on $V_i$ of size at most $\gamma |V_i|$, cover $V_i$ by a set of paths $\cP_i$ of size at most $\gamma \beta |V_i|\lesssim \gamma |V_{i+1}|$. Here, we allow paths of length $0$, corresponding to vertices. $\cP_i$ has the property that its paths cover all the edges in $M_i$. Then, sequentially match every endpoint of each path in $\cP_i$ to a random vertex in $V_{i+1}$ (we match the vertices that correspond to paths of length 0 twice) and let $M_i'$ be the resulting matching. Augment the paths in $\cP_i$ using the edges in $M_i'$. Finally, construct the matching $M_{i+1}$ on $V_{i+1}$ by placing for each path $P\in \cP_i$ an edge $e_P$ in $M_{i+1}$ that joins the endpoints of $P$.  Note that the edges in $M_{i+1}$ do not need to belong to $G$ and should be considered as ``dummy" edges. In the last step, given a matching $M_N$ on $V_N$ of size at most $\gamma |V_N|$, find a Hamilton cycle $H_N$ that covers $M_N$. Now observe that $H_N$ corresponds to a Hamilton cycle $H$ of $G$. 

For finding the Hamilton cycle $H_N$, we appeal to the following robust version of Dirac's theorem. Its proof is located in \Cref{subsec:robustDirac}. Note that in the robust Dirac's theorem, we have substituted the traditional $n/2$ minimum degree requirement with a slightly weaker minimum degree and an appropriate expansion requirement (which is satisfied by graphs with minimum degree $(1/2 + \eta)n$ with $\eta > 0$). 

\begin{definition}[Extremal graph] \label{def:extremal graph}
Let $\alpha>0$. We say that an $n$-vertex graph $G$ is $\alpha$-extremal if there exists a half-set $A\subseteq V(G)$ such that 
\begin{equation*}
\min\set{e(A), e(A,\overline{A})} \le \alpha n^2.
\end{equation*}
If the above inequality holds for some half-set $A$, we often say $G$ is $\alpha$-extremal with respect to $A$.
\end{definition}

\begin{theorem}[Robust Dirac's theorem] \label{thm:robustdirac}
Let $0<1/n \ll \epsilon,\zeta \ll \alpha \le 1$. Let $G$ be a non $\alpha$-extremal $n$-vertex graph of minimum degree at least $(1/2 -\epsilon)n$. Let $M$ be a matching of size at most $\zeta n$ on $V(G)$. Then, there exists a Hamilton cycle in $G\cup M$ that contains every edge in $M$.
\end{theorem}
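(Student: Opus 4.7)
The plan is to reduce the problem to finding an ordinary Hamilton cycle in a contracted auxiliary graph. I would form $G'$ from $G\cup M$ by contracting each matching edge $e = u_e w_e \in M$ to a single new vertex $v_e$ and suppressing loops and parallel edges. The resulting simple graph has $n' = (1-\zeta)n$ vertices, and any Hamilton cycle of $G'$ lifts canonically to a Hamilton cycle of $G\cup M$ that uses every edge of $M$: each contracted vertex is split back along its matching edge. Thus it suffices to show $G'$ is Hamiltonian.

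Next, I would verify that $G'$ inherits both hypotheses of the theorem up to small losses. Contracting a matching edge incident to $v$ decreases $d_{G'}(v)$ by at most one, hence
\begin{equation*}
\delta(G') \geq \delta(G) - |M| \geq (1/2 - \epsilon - \zeta)n \geq (1/2 - \epsilon')n',
\end{equation*}
with $\epsilon' := 2(\epsilon+\zeta) \ll \alpha$. For non-extremality, suppose toward a contradiction that $G'$ is $\alpha'$-extremal with respect to a half-set $A' \subset V'$, for some fixed $\alpha' \in (\epsilon', \alpha/10)$. Lift $A'$ to $A \subset V(G)$ by replacing each contracted $v_e \in A'$ with the pair $\{u_e, w_e\}$. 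Then $\bigl||A| - n/2\bigr| \leq \zeta n/2$, and adjusting $A$ by at most that many vertices produces a half-set $\hat A \subset V(G)$ whose edge counts differ from those of $A$ by at most $O(\zeta n^2)$. Since every edge of $G'[A']$ is the image of at most four edges of $G[A]$ (with all four occurring only when two fully contracted endpoints have every possible edge between them), and the matching edges inside $A$ contribute at most $|M|$ to $e_G(A)$ but nothing to $e_{G'}(A')$, one concludes
\begin{equation*}
\min\{e_G(\hat A), e_G(\hat A, \overline{\hat A})\} \leq 4\alpha' n^2 + O(\zeta n^2) < \alpha n^2,
\end{equation*}
contradicting the hypothesis that $G$ is not $\alpha$-extremal.

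Finally, since $G'$ has minimum degree at least $(1/2 - \epsilon')n'$ and is not $\alpha'$-extremal with $\epsilon' \ll \alpha' \ll 1$, I would invoke (or reprove directly) the stability version of Dirac's theorem: any such graph is Hamiltonian. This admits a short absorption-based proof using the strong expansion that non-extremality guarantees (every vertex set of size $\gtrsim \alpha' n'$ has common neighbourhood of size larger than $n'/2$), combined with P\'osa rotations that extend a long path into a Hamilton cycle closed off through a pre-built absorbing structure. The main obstacle of the plan is the non-extremality transfer step: one must simultaneously control the shifts from lifting a half-set across the contraction, the possible fourfold parallel-edge duplication, the $|M|$ matching edges that become loops, and the correction to the half-set range. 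All of these losses are dominated by $\alpha n^2$ under the hierarchy $\epsilon, \zeta \ll \alpha$ in the statement, so the argument goes through.
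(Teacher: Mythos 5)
Your contraction argument has a genuine gap at the very first step: the claim that ``any Hamilton cycle of $G'$ lifts canonically to a Hamilton cycle of $G\cup M$ that uses every edge of $M$'' is false. In your contracted graph $G'$, an edge $xv_e$ exists whenever $x$ is adjacent in $G$ to \emph{at least one} of $u_e,w_e$. So a Hamilton cycle of $G'$ can approach $v_e$ from both sides through neighbors $a,b$ with $au_e,bu_e\in E(G)$ but $aw_e,bw_e\notin E(G)$. Then neither of the two possible re-expansions $a\text{--}u_e\text{--}w_e\text{--}b$ nor $a\text{--}w_e\text{--}u_e\text{--}b$ is a valid path in $G\cup M$, and the lift fails. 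Nothing in the hypotheses prevents this: $M$ is an arbitrary matching, and $u_e,w_e$ need not even be adjacent in $G$, so they may have essentially disjoint neighborhoods.

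The natural patches do not rescue the plan either. If you only keep the edge $xv_e$ when $x$ is a common neighbor of $u_e$ and $w_e$ (which does make every Hamilton cycle liftable), then $d_{G'}(v_e)$ is roughly $|N_G(u_e)\cap N_G(w_e)|$, and at minimum degree $(1/2-\epsilon)n$ this co-degree can be as small as zero; a contracted vertex may become isolated, so the degree bound $\delta(G')\ge(1/2-\epsilon')n'$ simply does not hold for this restricted graph. If instead you keep the full contracted graph and try to track, for each cycle through $v_e$, which of $u_e,w_e$ each incident cycle-edge ``came from,'' you are now asking for a Hamilton cycle satisfying local compatibility constraints at every contracted vertex; this is no longer plain Hamiltonicity and cannot be discharged by citing a stability Dirac theorem. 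Effectively you have reproduced the original problem inside the contracted picture. Your non-extremality-transfer calculation and the minimum-degree bookkeeping for ordinary vertices are fine as far as they go, but they sit on top of a reduction that does not close.

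By contrast, the paper handles the matching constraint in the uncontracted graph directly via a modified rotation--extension argument: it defines a ``good'' path as one containing both endpoints of every $M$-edge it touches, shows a longest good path exists and that $G$ is connected, and performs P\'osa rotations that never break an $M$-edge (so goodness is preserved), closing the cycle with the aid of \Cref{lem:KLS general}. That is the piece of machinery your proposal is missing; without it, or a compatible-Hamilton-cycle framework in the contracted graph, the proof does not go through.
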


To construct $\cP_i$, we consider the auxiliary bipartite graph $B_i$ on $X\cup Y$ where each of $X,Y$ is a copy of $V_i$ and for $x\in X$, $y\in Y$ the edge $xy$ belongs to $B_i$ if and only if it belongs to $G$. Then, for every edge in $M_i$ we remove one of its endpoints from $X$ and the other from $Y$. We then generate the random subgraph $(B_i)_p$ of $B_i$ by retaining each of its edges with probability $p=O(1/|V_i|)$. We continue and construct a large matching $M_i''$ from $(B_i)_p$. Note that every vertex is incident to at most $2$ edges in $M_i\cup M_i''$. Thus, $M_i\cup M_i''$ induces a set of cycles and paths in $G$ (where the number of cycles and paths is small whp). Removing an edge, not in $M_i$, from each cycle results in the desired set of paths $\cP_i$.

We let $\mathcal{D}$ be the probability measure that assigns to each Hamilton cycle $H$ of $G$ probability proportional to the probability that $H$ is output by the above procedure. It turns out that $\mathcal{D}$ is $O(1/n)$-spread. For example, a fixed edge $e$ belongs to a Hamilton cycle sampled according to $\mathcal{D}$ only if (i) $e$ is spanned by some $V_i$, $i<N$ and belongs to $(B_i)_p$ i.e. it survives the sparsification or, (ii) $e$ is spanned by some $V_i\times V_{i+1}$ and is chosen by its $V_i$-endpoint to be added to $M_i'$, or (iii) $e$ is spanned by $V_N$. Now, $e$ is spanned by some $V_i$, $i\le N$ (respectively belongs to $V_i\times V_{i+1}$) with probability at most $|V_i|^2/n^2$ (respectively $|V_i||V_{i+1}|/n^2\le |V_i|^2/n^2$). Then, conditioned on $e$ being spanned by $V_i$, we have that $e$ survives the sparsification with probability $O(1/|V_i|)$. This follows from the observation that, by symmetry, for $v\in V_i$ each of the at least $|V_i|/2$ edges that is incident to $v$ and is spanned by $V_i$ survives the sparsification with the same probability. On the other hand, conditioned on $e\in V_i\times V_{i+1}$, since the $V_{i}$-endpoint of $e$ has at least $|V_{i+1}|/2$ neighbors in $V_{i+1}$ we have that $e$ belongs to $M_i'$ with probability $O(1/|V_{i+1}|)=O(1/|V_{i}|)$. It follows that $e$ belongs to $H$ with probability 
\begin{equation}\label{eq:toy_spread}
\sum_{i=1}^{N-1} \frac{|V_i|^2}{n^2} \cdot O\bfrac{1}{|V_i|}+ \sum_{i=1}^{N-1} \frac{|V_i|^2}{n^2} \cdot O\bfrac{1}{|V_i|} +\frac{|V_N|^2}{n^2}=O\bfrac{1}{n}+\frac{|V_N|^2}{n^2}=O\bfrac{1}{n}.
\end{equation}

To prove a simpler version of \Cref{thm:main_spread} where each of the graphs in $\mathbb{G}$ has minimum degree at least $(1/2+\eta)n$, one has to modify the above argument in a number of ways to ensure that the Hamilton cycle $H$ is $\mathbb{G}$-transversal. The first is to randomly partition the set of colors into sets $C_1,\dots,C_N$. For $i<N$ the size of the color set $C_i$ will be about the size of the vertex set $V_i$ while $|C_N|\gg |V_N|$. Now, the first step, as seen shortly, will be slightly different from the next ones. At step $i \in \{2,\dots, N-1\}$, the edges spanned by $\cP_i$ that do not belong to $M_i$ will be $C_i$-rainbow (and there will be $|C_i|$ such edges). It will turn out that during these steps, we have a lot of room for selecting the corresponding colored edges. At the last step, we will ensure that $E(H_N)\setminus M_N$ is $C_N$-rainbow. To do so, since $C_N$ contains way more than $|V_N|$ colors, we can focus on finding the set $E(H_N)\setminus M_N$ among the set of edges that can be colored with at least $|V_N|$ colors and then assigning colors to these edges in a greedy way. 

The last part is to ensure that all the colors in $C_1 \cup C_N$ are used on $H$. To do so, we start the very first step of the procedure by building an absorber  $(M_{abs}, C_{abs})$. $M_{abs}$ will be a matching on $V_1$ and $C_{abs}$ will be a subset of $C_1$. At this point, it should be thought that the edges in $M_{abs}$ will appear on the final Hamilton cycle $H$. However, we still have not decided what color they will be assigned. Instead, we ensure that we have some flexibility on how we can color them at the very end. Namely, the absorber $(M_{abs}, C_{abs})$ will have the property that for every subset $C_{\text{leftover}}$ of $C_N$ of colors that are not used in the last step of the procedure, the set $M_{abs}$ can be colored with the colors $C_{abs}\cup C_{\text{leftover}}$. At the first step of the procedure after building the absorber $(M_{abs}, C_{abs})$ we set $M_1=M_{abs}$ and continue by finding a set of paths $\cP_1$ so that the set of edge of $\cP_1$ not spanned by $M_1$ is $C_1\setminus C_{abs}$-rainbow.  Now, at the end of our procedure, given the colors that appear on $E(H_N)\setminus M_N$, we use the absorbing property of $M_1=M_{abs}$, $C_{abs}$ and color $M_1$ with the colors in $C_{abs}\cup C_N$ that are not used at step $N$. This ensures that $H$ is rainbow. 

To prove \Cref{thm:main_spread}, we use a stability approach. It is well known that if $G$ is an $n$-vertex Dirac graph, then for every pair of half-sets $A,B\subseteq V(G)$ there are $\Omega(n^2)$ edges from $A$ to $B$ or $G$ is close (in edit distance) to one of the two extremal graphs. These are (i) the union of two cliques on $n/2$ vertices and (ii) the balanced complete bipartite graph. In these two cases, we have that either $e(A)$ or $e(A,\overline{A})$ is small for some half-set $A$ (also see \Cref{def:extremal graph}). To do such arguments in proving \Cref{thm:main_spread}, we need a generalization of this characterization as given by the $\alpha$-extremal graph families defined below. 

\begin{definition}[Extremal family of graphs] \label{def:extremal family}
Let $\alpha>0$. We say that a graph family $\mathbb{G}=\set{G_1,\ldots,G_m}$ on a vertex set $V$ consisting of $n$ vertices is $\alpha$-extremal if there exists a half-set $A\subseteq V$ such that 
\begin{equation}
\sum_{i\in [m]} \min\set{e_{G_i}(A), e_{G_i}(A,\overline{A})} \le \alpha mn^2.
\end{equation}
If the above inequality holds for some half-set $A$, we often say $\mathbb{G}$ is $\alpha$-extremal with respect to $A$.    
\end{definition}

The argument presented earlier for dealing with graph families of minimum degree $(1/2+\eta)n$ can be used verbatim to deal with non $\alpha$-extremal graph families for $\alpha>0$. The reason is that the property of being/ or not being $\alpha$-extremal is preserved (up to multiplicative rescaling) under random subsampling of the vertex set and the color set of the family whp (see \Cref{lem:property preserve}). In the event $\mathbb{G}_{C_N}[V_N]$ is non-extremal, we can use the robust Dirac theorem to deal with the last step where we construct $E(H_N)\setminus M_N$.

We often work with edge-minimal Dirac graphs, which gives us more flexibility in making certain arguments. A Dirac graph $G$ is called \textit{edge-minimal} if, for every edge $e\in E(G)$, the graph obtained by removal of $e$ from $G$ is not a Dirac graph. The following lemma will be proved at the end of this paper in \Cref{section:property preserve}.

\begin{lemma}[Non-extremality preserving lemma for family] \label{lem:property preserve}
Let $0 < 1/n,1/m  \ll \alpha' \ll \alpha \le 1$. Let $p, q\in [0,1]$. Let $\mathbb{G}$ be a not $\alpha$-extremal family of $m$ edge-minimal Dirac graphs on a common vertex set $V$ of size $n$. Denote the index set of the graphs in $\mathbb{G}$ by $C$. Then, with probability at least $1-e^{-\Omega(\alpha' qm )} - \alpha (qm)(pn)^2 e^{-\Omega(\alpha' pn)}$, the random graph family $\mathbb{G}_{C(q)}[V(p)]$ is not $\alpha'$-extremal.
\end{lemma}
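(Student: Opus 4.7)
The plan is to bound, by Markov's inequality, the expected number of half-sets of $V(p)$ that witness $\alpha'$-extremality of $\mathbb{G}_{C(q)}[V(p)]$. As a first reduction, I would establish the following ``vertex-only'' intermediate statement: with probability at least $99/100$ over the choice of $V(p)$, the family $\mathbb{G}[V(p)]$ is not $\alpha''$-extremal for some intermediate parameter $\alpha''$ with $\alpha' \ll \alpha'' \ll \alpha$. A Chernoff argument on the independent color sampling $C(q)$ (using the gap $\alpha'' > \alpha'$ together with $|C(q)| \approx qm$) then upgrades this to the full conclusion.

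For the vertex-only statement, the first step is a pigeonhole argument: non-$\alpha$-extremality of $\mathbb{G}$ yields, for every half-set $A \subseteq V$, at least $\Omega(\alpha m)$ colors $i$ with both $e_{G_i}(A) \geq \Omega(\alpha n^2)$ and $e_{G_i}(A, \overline{A}) \geq \Omega(\alpha n^2)$. Next, for each half-set $A' \subseteq V(p)$, I would fix an extension to a half-set $A$ of $V$ with $A' = A \cap V(p)$, and apply a Janson-type lower-tail inequality to each of the $\Omega(\alpha m)$ good colors to conclude that both $e_{G_i}(A')$ and $e_{G_i}(A', V(p) \setminus A')$ are of order $\Omega(p^2 \alpha n^2) = \Omega(\alpha |V(p)|^2)$. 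Summing the minima over the good colors then yields $\sum_i \min\{e_{G_i}(A'), e_{G_i}(A', V(p) \setminus A')\} \geq \Omega(\alpha^2 m |V(p)|^2)$, giving non-$\Omega(\alpha^2)$-extremality of $\mathbb{G}[V(p)]$. The edge-minimality hypothesis enters here to control the ``open fan'' count $\sum_v d_{G_i}(v)^2 = O(n^3)$, which bounds the Janson correlation term.

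The main obstacle is performing the union bound. A naive enumeration over half-sets of $V$ gives $\binom{n}{n/2} = 2^{\Theta(n)}$ events, and in the sparse regime $p = L/n$ the best concentration available (Janson on the random induced subgraph) only yields a tail of order $\exp(-\Omega(\alpha L))$ per event, which cannot beat $2^n$. I would instead condition on $V(p)$ first and union-bound only over half-sets $A' \subseteq V(p)$ (at most $\binom{|V(p)|}{|V(p)|/2} \leq 2^{|V(p)|}$ many), invoking the hierarchy $1/L \ll \alpha'$ to ensure that the Janson/Chernoff exponent dominates the $|V(p)| \log 2$ union-bound penalty by a constant factor. The same slack absorbs the contribution from atypical realizations of $V(p)$ (where $|V(p)|$ or $|A \cap V(p)|$ deviate significantly from their means), each contributing only $\exp(-\Omega(L))$ error.
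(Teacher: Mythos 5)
Your proposed fix for the union bound does not work, and this is the crux of the proof, so the gap is fatal. You correctly identify the obstacle: union-bounding the Janson estimate $\exp(-\Omega(\alpha L))$ over the $2^{\Theta(n)}$ half-sets of $V$ fails. But the remedy you propose --- ``condition on $V(p)$ first and union-bound only over half-sets $A' \subseteq V(p)$'' --- is incoherent: once $V(p)$ is conditioned on, the family $\mathbb{G}[V(p)]$ is a fixed object and there is no residual randomness to which Janson can be applied (the color sampling $C(q)$ does not help here, since the ``vertex-only'' intermediate claim you want is a statement about $\mathbb{G}[V(p)]$ alone). Unwinding the conditioning, the half-set $A'\subseteq V(p)$ you wish to control determines part of $V(p)$, so the ``$2^{|V(p)|}$ events'' are really a sum over $\approx \binom{n}{pn/2}$ choices of $A' \subseteq V$ weighted by $\Pr[A'\subseteq V(p)]$, and the remaining Janson concentration (now only over $V(p)\setminus A'$) cannot compensate; taking $p=L/n$ with $L$ a fixed constant while $n\to\infty$, the exponential-in-$n$ cost of the union bound is irrecoverable. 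The hierarchy $1/L\ll\alpha'$ does not rescue this, because the exponent in the Janson tail is $\Theta(\alpha^2 L)$ (independent of $n$), whereas the number of competing half-sets grows like $2^n$.

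The paper avoids this trap entirely by never union-bounding over half-sets. For a single graph (Lemma~6.2) it replaces the global extremality condition by a \emph{local certificate}: non-extremality is shown to be equivalent, up to parameter loss, to a condition on vertex degrees and on the number of vertex pairs whose co-degree lies in an intermediate range $[\delta n,(1/2-\delta)n]$ (Claims inside Case~2 of the proof). Preservation of this certificate under $V(p)$ requires Chernoff with a union bound over only $O(n^2)$ vertices and pairs, which trivially beats the $\exp(-\Omega(\alpha'^2 pn))$ tails. Edge-minimality of the Dirac graphs enters precisely at this point --- not, as you suggest, to control a Janson correlation term $\sum_v d(v)^2$ (that bound is $O(n^3)$ for any graph), but to guarantee that every edge is incident to a vertex of degree exactly $\lceil n/2\rceil$, which is what makes the degree/co-degree certificate both necessary and sufficient. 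The family-level Lemma~4.1 then follows by a three-way case analysis on $\mathbb{G}$ (a $\delta$-fraction of non-extremal graphs; many $\delta$-good pairs among the extremal ones; or neither, which contradicts non-$\alpha$-extremality of the family), invoking the single-graph Lemma~6.2 and an auxiliary Lemma~6.3 about good pairs, again with only polynomial union bounds. Your pigeonhole reduction (``$\Omega(\alpha m)$ good colors per half-set'') is correct and is implicitly present, but without the local certificate the program does not close.

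Additionally, even your per-half-set concentration step has an overlooked issue: you need to control $\Omega(\alpha m)$ colors simultaneously, and a naive union bound over colors contributes a factor $m$ (which may be as large as $n$) against a tail of $\exp(-\Omega(\alpha^2 L))$ with $L$ constant; one must instead argue via Markov on the expected number of ``failed'' colors. This is repairable, but it is another place where the sketch cuts corners.
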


It thus remains to describe the proof of \cref{thm:main_spread} in the case that $\mathbb{G}$ is $\alpha$-extremal for some small $\alpha$. It turns out that after some clean-up, this case boils down to finding an $O(1/n^2)$-spread probability measure on the set of rainbow Hamilton paths in a family of almost complete bipartite graphs of large minimum degree that share the same bipartition. By leveraging the additional structure, we prove this last statement using more traditional means. The proof of \Cref{thm:main_spread_exceptional} follows in a similar manner. For the detailed proofs, see \Cref{section:extremal families}.

\subsection{Proof of robust Dirac's theorem} \label{subsec:robustDirac}
To prove \Cref{thm:robustdirac}, we use the following property of non-extremal graphs. This is a simpler version of \cite[Lemma~2.1]{krivelevich2014robust} with a slightly more general minimum degree condition. Similar lemmas for Dirac graphs also appeared in \cite{cuckler2009hamiltonian,komlos1998proof, krivelevich2014robust, sarkozy2008distributing}

\begin{lemma} \label{lem:KLS general} 
Let $0< 1/n \ll \epsilon \ll \alpha \le 1$. Let $G$ be an $n$-vertex graph with minimum degree at least $(\frac{1}{2}-\epsilon)n$. If $A$ is a half-set such that $\min\set{e(A), e(A,\overline{A})} \ge \alpha n^2$, then for every half-set $B$, we have $e(A,B)\ge \frac{\alpha}{3} n^2$. In particular, if $G$ is non $\alpha$-extremal, then for every pair of half-sets $A$ and $B$, we have $e(A,B)\ge \frac{\alpha}{3} n^2$.
\end{lemma}
\begin{proof}
Let $A$ be a half-set such that $\min\set{e(A), e(A,\overline{A})} \ge \alpha n^2$, i.e., $G$ is not $\alpha$-extremal with respect to $A$. For contradiction, let $B$ be a half-set such that $e(A,B)<\frac{\alpha}{3} n^2$. We have $|\overline{A\cup B}| \le |A\cap B| + 1$. Thus, 
\[
|(A\cup B)\cap N(v)| \ge N(v)- |\overline{A\cup B}| \ge \left(\frac{1}{2}-\epsilon\right)n - |A\cap B| - 1.
\] 
Then, 
\[
e(A,B) \ge \sum_{v\in A\cap B} |(A\cup B)\cap N(v)| \ge |A\cap B| \cdot \left(\left(\frac{1}{2}-\epsilon\right)n - |A\cap B| - 1\right).
\]
We split into three cases.

\paragraph{Case I} If $\alpha n \le |A\cap B| \le (\frac{1}{2} - \alpha) n$, then 
\[
e(A,B)\ge |A\cap B| \cdot \left(\left(\frac{1}{2}-\epsilon\right)n - |A\cap B| - 1\right) \ge \frac{\alpha}{3} n^2.
\]
This is a contradiction to our assumption.

\paragraph{Case II}  If $|A\cap B|\le \alpha n$, then 
\[
e(A,\overline{A})\le e(A,B) + |A|\cdot |\overline{A\cup B}| 
< \frac{\alpha}{3} n^2 + \frac{2\alpha}{3} n^2 = \alpha n^2.
\]
This contradicts that $G$ is not $\alpha$-extremal with respect to $A$.

\paragraph{Case III} If $|A\cap B|\ge (\frac{1}{2} - \alpha) n$, then 
\[
e(A)\le e(A,A) 
\le e(A,B) + |A| \cdot (|A| - |A\cap B|) 
< \frac{\alpha}{3} n^2 + \frac{2\alpha}{3} n^2
= \alpha n^2.
\]
This again contradicts that $G$ is not $\alpha$-extremal with respect to $A$. This finishes the proof of \Cref{lem:KLS general}.
\end{proof}

We remark that the majority of Hamiltonicity-type results in random graphs extensively use the rotation-extension method, which was first introduced by P\'osa~\cite{posa1976hamiltonian}. We next use this versatile technique to prove \Cref{thm:robustdirac}. Some similar ideas were used in~\cite{krivelevich2014robust}.

\begin{proof}[{\bf Proof of \Cref{thm:robustdirac}}]
Let $\eta=\max\{\zeta,\epsilon\}$, thus $\delta(G)\ge (1/2-\eta) n$ and $|M|\le \eta n$. We first show that $G$ is connected. For the sake of contradiction, suppose $G$ is not connected. Since $\delta(G)\ge (1/2 - \eta)n$, the graph $G$ contains exactly two connected components $A,B$ with size at least $(1/2 - \eta)n$. Suppose $|A|\ge |B|$. Choose a half-set $S\subseteq V(G)$ containing $B$ and let $T=V(G)\setminus S$. Then $e_G(S,T)\le |S\setminus B|\cdot |T| \le 2\eta n^2 < \alpha n^2$, which is a contradiction. Thus, $G$ is connected.  

For a subgraph $H$ of $G\cup M$, we call it \textit{good} if for all $uv\in M$, if $u\in V(H)$ or $v\in V(H)$, then $uv\in E(H)$. To prove \Cref{thm:robustdirac}, it is enough to find a good Hamilton cycle in $G\cup M$. Consider a longest good path $P$ in $G$. Thus, it is enough to show the existence of a good cycle using all the vertices in $P$. Indeed, if $|V(P)|=n$, then this cycle would give us a desired Hamilton cycle; otherwise, using the connectedness of $G$ it is possible to extend the good cycle to a good path longer than $P$ as follows. Since $G$ is connected, there is an edge $uv$ where $u$ is a vertex from the cycle and $v$ lies outside of the cycle. Let $u'$ be a neighbor of $u$ in the cycle such that $uu'\notin M$. Then, if $uv\in M$ or $v\notin V(M)$, then we can simply delete the edge $uu'$ from the cycle and add $uv$ to obtain a good path longer than $P$, contradicting that $P$ is a longest good path. Similarly, if $uv\notin M$ and $v\in V(M)$, then let $v'$ be such that $vv'\in M$. Since the cycle is good, $v'$ is outside of the cycle, and now deleting $uu'$ and adding the path $uvv'$ to the cycle gives a good path longer than $P$, a contradiction.

By a similar argument, we can show that any longest good path has the property that the neighborhood of its endpoints must lie in the path. We will repeatedly use this fact. Suppose $P = (v_0,v_1,\ldots,v_{\ell})$. If $v_iv_{\ell}$ is an edge in $G$ and $v_iv_{i+1}\notin M$, then the path $P'=(v_0,v_1,\ldots,v_i,v_{\ell},v_{\ell - 1},\ldots,v_{i+1})$ is also a longest good path. It is common in the literature to say that $P'$ is obtained from $P$ by a \textit{rotation} with \textit{fixed endpoint} $v_0$, \textit{pivot point} $v_i$, and \textit{broken edge} $v_iv_{i+1}$. We can again apply a rotation on $P'$ with the fixed endpoint $v_0$ and avoiding the edges in $M$ as broken edges to get another longest good path. Thus, repeatedly applying such rotations on $P$ with fixed endpoint $v_0$ generates many good paths using all the vertices in $P$. Define $S_0:=\{v_{\ell}\}$. For $t\ge 1$, define $S_t$ to be the set of vertices $v\in V(P)$ such that there is a good path using all the vertices in $P$ with an endpoint $v$ that can be found by applying at most $t$ such rotations on $P$ with the fixed endpoint $v_0$. From the minimum degree condition, we have 
\begin{equation} \label{eq:size of S_1}
|S_1|\ge d(v_{\ell}) -|M| \ge (1/2-2\eta)n.
\end{equation}

We next claim that $|V(P)| \ge (1-10\eta)n$. Suppose not, then by the fact that $d(v_0),d(v_{\ell})\ge (1/2 -\eta)n$ and that every neighbor of $v_0,v_{\ell}$ must lie on $P$ and that $|M|\le \eta n$, we have that there must exist $j\in [\ell -1]$ such that $v_0 v_{j+1} \in E(G)$, $v_{\ell} v_j \in E(G)$, and $v_j v_{j+1} \notin E(M)$. Then, the cycle given by $v_0 v_{j+1} v_{j+2} \ldots v_{\ell} v_j v_{j-1} \ldots v_0$ is a desired cycle using all the vertices on $P$. Thus, we have the following:
\begin{equation} \label{eq:size of V(P)}
|V(P)| \ge (1-10\eta)n.
\end{equation}

If $|S_3|\ge (1/2 +\eta)n$, then $S_3\cap N(v_0)\neq \emptyset$, thus we can find a good cycle as desired. Thus, we can assume that $|S_3|<(1/2 +\eta)n$. For a set $A=\{v_{i_1},\ldots,v_{i_k}\}$, we define the sets $A^+:=\{v_{i_1 +1},\ldots,v_{i_k +1}\}$ and $A^-:=\{v_{i_1 -1},\ldots,v_{i_k -1}\}$. We ignore indices that are not between $0$ and $\ell$.

\begin{claim} \label{cl:size of S_3}
If $|S_3|<(1/2 +\eta)n$, then $|S_3^+ \cup S_3^-|\le (1/2 + 20\eta)n$.
\end{claim}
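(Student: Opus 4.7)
The plan is to prove the claim by contradiction, using the rotation–extension machinery that already underlies the proof of \Cref{thm:robustdirac}. Suppose, for contradiction, that $|S_3^+\cup S_3^-|>(1/2+20\eta)n$. Combined with $d_G(v_0)\ge (1/2-\eta)n$, the bound $|V(P)|\ge (1-10\eta)n$ from~\eqref{eq:size of V(P)}, and $|M|\le \eta n$, the strategy is to construct a good cycle on $V(P)$; by the extension argument already given earlier in the proof of \Cref{thm:robustdirac} (using connectivity of $G$), such a cycle either already furnishes the desired Hamilton cycle of $G\cup M$ or produces a good path strictly longer than $P$, contradicting the maximality of $P$.

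The first concrete step is a pigeonhole argument inside $V(P)$. Note that $(S_3^+\cup S_3^-)\subseteq V(P)$ by definition, and $|N(v_0)\cap V(P)|\ge (1/2-\eta)n - (n-|V(P)|) \ge (1/2-11\eta)n$. Hence by inclusion–exclusion,
\begin{equation*}
|N(v_0)\cap (S_3^+\cup S_3^-)|\;\ge\; |S_3^+\cup S_3^-|+|N(v_0)\cap V(P)|-|V(P)|\;\ge\; 9\eta n,
\end{equation*}
which comfortably exceeds $2|M|$. After discarding the few choices touching $M$-edges incident to $v_0$ or to the associated pivot, we may select $w=v_j\in N(v_0)$ with, say, $v:=v_{j-1}\in S_3$ (the case $v=v_{j+1}$ is symmetric). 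Fix a good path $P_v$ from $v_0$ to $v$ obtained from $P$ by at most three rotations. Each rotation reverses exactly one contiguous segment of the current path, so $P_v$ decomposes into a bounded number of intervals of $P$, each either kept intact or reversed. If $v_{j-1}$ and $v_j$ lie in a common such interval, the edge $v_{j-1}v_j\in E(P)\subseteq E(G\cup M)$ survives on $P_v$; then the edge $v_0 v_j\in E(G)$ together with one final pivot closes $P_v$ into a good cycle on $V(P)$. Otherwise, one auxiliary rotation of $P_v$ brings $v_j$ next to $v$ and reduces to the previous case.

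The main obstacle is ensuring that the resulting cycle is \emph{good}, i.e.\ that it contains every $M$-edge that it touches. The rotations used to build $P_v$ and then close it break only a constant number of edges of $P$, but some of those edges may happen to lie in $M$, and both the closing edge and the pivot must avoid $M$. This is exactly where the $19\eta n$ slack between $|S_3^+\cup S_3^-|>(1/2+20\eta)n$ and $|S_3|<(1/2+\eta)n$ is spent: the $9\eta n$ candidates $w$ produced above, against only $O(\eta n)$ forbidden choices coming from $M$, provide enough flexibility to choose $w$ and its pivot while keeping all $M$-edges intact. The careful bookkeeping of $M$-edges during rotations follows the template of~\cite{krivelevich2014robust}.
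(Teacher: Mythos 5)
The approach you take is genuinely different from the paper's, and unfortunately it has a gap at the crucial step. The paper proves the claim \emph{directly}: it locates a pivot $v_{j-1}$ for which one can exhibit two good paths $P_1, P_2$ (reached from $P$ by one and two rotations respectively) that share the endpoint $v_j$ but traverse a long segment $V = V_\ell\setminus V_k$ in \emph{opposite} directions. For any $v_m\in (N(v_j)\cap V)\setminus V(M)$, one further rotation on $P_1$ puts $v_{m-1}$ in $S_3$ while the same rotation on $P_2$ puts $v_{m+1}$ in $S_3$; hence $v_m\in S_3^+\cap S_3^-$. This shows $|S_3^+\cap S_3^-|\ge(1/2-18\eta)n$, and the desired bound on $|S_3^+\cup S_3^-|$ then follows from inclusion--exclusion against $|S_3^{\pm}|\le|S_3|<(1/2+\eta)n$. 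You instead argue by contradiction and try to manufacture a good cycle on $V(P)$.

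The gap is in the cycle-closing step. Having $v_j\in N(v_0)$ with $v:=v_{j-1}\in S_3$ gives a good path $P_v$ from $v_0$ to $v_{j-1}$, and you may arrange (with extra rotations) that $v_j$ is the penultimate vertex of $P_v$. But now the chord $v_0 v_j$ does \emph{not} close $P_v$ into a cycle on $V(P)$: the cycle $v_0,\ldots,v_j,v_0$ simply omits the final vertex $v_{j-1}$. A ``final pivot'' doesn't repair this either. Reading $P_v$ as $u_0=v_0,\ldots,u_{s-1}=v_j,u_s=v_{j-1}$, a rotation fixing $v_0$ needs an edge $u_s u_i=v_{j-1}u_i$, and to close with $v_0 v_j$ you would need $i=s-2$, i.e.\ $v_{j-1}u_{s-2}\in E(G)$, which is not guaranteed; a rotation fixing $v_{j-1}$ and using $v_0 v_j$ moves the far endpoint to $u_{s-2}$ and then needs $v_{j-1}u_{s-2}\in E(G)$ — the same unavailable edge. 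So the claim ``the edge $v_0 v_j$ together with one final pivot closes $P_v$ into a good cycle'' is not correct as stated. Note also that the claim in the paper is established \emph{without} invoking non-extremality of $G$; what makes the overall proof of \Cref{thm:robustdirac} go through is that the claim yields a large set $D$ of unsaturated vertices, and only then does \Cref{lem:KLS general} supply the contradictory edge from $D$ to $S_1$. Your plan to get the contradiction directly from $|S_3^+\cup S_3^-|$ being large collapses without an actual cycle, and you never use the hypothesis $|S_3|<(1/2+\eta)n$ in an essential way (the ``$19\eta n$ slack'' you describe is not a real subtraction between these two quantities). To repair the proof you would need the paper's two-paths-in-opposite-directions construction, or an equivalent device that forces $S_3^+\cap S_3^-$ to be large.
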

\begin{proof}
Assume that $|S_3|<(1/2 +\eta)n$. For $i\in [\ell]$, let $V_i:=\{v_0,\ldots,v_i\}$. Let $k$ denote the minimum index such that $|S_1\cap V_k|=4\eta n$. Thus, $v_k\in S_1$ and the edge $v_{k-1}v_{\ell}\in E(G)$. We claim that there exists $j\le k-2$ such that $v_{j-1}v_k, v_{j-1}v_\ell\in E(G)$ and $v_{j-1}v_j\notin M$. To prove this claim it suffices to show that $v_k$ has a neighbor $v_{j-1}$ among $v_1,v_2,\ldots,v_{k-3}$ such that $v_j \in S_1$ (if $v_j \in S_1$, then $v_{j-1}v_j\notin M$ ). Assume not. Then by considering the P\'osa rotations on the path $v_0,v_1,\ldots,v_{k-1},v_\ell,v_{\ell-1},\ldots,v_k$ that fix $v_0$ and give a good path we have
$$|S_3|\ge |S_2|\ge (|N(v_k)|-|M|)+|S_1\cap V_{k-3}| \ge (1/2-2\eta)n + 4\eta n-3>(1/2+\eta) n,$$
which contradicts our initial assumption. Thus we may assume that there exists $j<k-1$ such that $v_{j-1}v_k, v_{j-1}v_\ell\in E(G)$ and $v_{j-1}v_j\notin M$. Observe that in this case we have two longest good paths $P_1:=(v_0,\ldots,v_{j-1},v_{\ell},v_{\ell -1},\ldots,v_j)$ and $P_2:=(v_0,\ldots,v_{j-1},v_k,v_{k+1},\ldots,v_{\ell},v_{k -1},v_{k-2},\ldots,v_j)$ with the same endpoints that traverse the part in $V:=V_{\ell}\setminus V_k$ in opposite ways.  This ensures that the set $C:=(N(v_j)\cap V)\setminus V(M)$ is in both $S_3^+$ and $S_3^-$. As $|S_3|\le (1/2+\eta)n$ is an upper bound on both $|S_3^+|$ and $|S_3^-|$, to finish the proof, it is enough to show that $|C|\ge (1/2-18\eta)n$. This immediately follows from the following (where we use \eqref{eq:size of S_1} at the fourth inequality).
\begin{align*}
(1/2+\eta)n \ge |S_3| &\ge |S_1 \cap V| + |S_3\setminus V| 
\\&\ge \left(|S_1|- |V_k\cap N(v_\ell)^+|\right) + |N(v_j)\setminus \left(V\cup V(M)\right)| 
\\&\ge ((1/2-2\eta)n-4\eta n ) + |N(v_j)\setminus V| - 2|M| \ge (1/2-8\eta)n + |N(v_j)\setminus V|,
\end{align*}
thus $|N(v_j)\setminus V|\le 9\eta n$, and $|C|\ge |N(v_j)|-2|M|-|N(v_j)\setminus V|\ge (1/2-\eta)n-2\eta n-9\eta n=(1/2-12\eta)n$.
\end{proof}

Let $D:= V(P) \setminus (S_3^+ \cup S_3^- \cup V(M))$. Let $D',S_1'$ be half-sets such that both $|D'\cap D|$ and $|S_1'\cap S_1|$ are maximized. Then, by \eqref{eq:size of S_1}, \eqref{eq:size of V(P)}, and \Cref{cl:size of S_3}, we have $|D'\setminus D|\le 32\eta n$ and $|S_1'\setminus S_1|\le 2\eta n$. By \Cref{lem:KLS general}, we have $e_G(D', S_1')\ge \alpha n^2/3$, thus there is an edge from a vertex $v_t\in D$ to a vertex $v_s\in S_1$. As $v_t\in D\subseteq \overline{S_3^+\cup S_3^-\cup V(M)}$, we have that $v_t$ is not adjacent to $v_s$ on $P$ and $v_tv_{t+1},v_tv_{t-1} \notin M$. Thus, the edge $v_tv_s$ can be used to perform a P\'osa rotation on $v_0\ldots v_{s-1}v_\ell v_{\ell-1}\ldots,v_s$ that fixes $v_0$ and gives a good path. Thus if $t<s$ then $v_{t+1}\in S_2$, else $v_{t-1}\in S_2$. In both cases, we have that $v_t\in S_2^+\cup S_2^-\subseteq S_3^+\cup S_3^-$, which contradicts the fact that $v_t\in D$. This finishes the proof of \Cref{thm:robustdirac}.
\end{proof}

\subsection{Concentration inequalities}
In our proof, we use the following standard concentration inequalities. 
\begin{theorem} [Chernoff bound, see~\cite{chernoff1952measure,mitzenmacher2017probability}] \label{chernoff} 
Let $0\le p\le 1$ and $X_1,\ldots,X_n$ be independent $Bernoulli(p)$ random variables. Let $X = \sum_{i=1}^n X_i$ and $\mu = \mathbb{E}(X) = np $. Then for any $0 < \delta < 1$, we have 
\[
\Pr\left[|X - \mu| \ge \delta \mu\right] \le 2 e^{-\frac{\delta^2 \mu}{3}}.
\]
\end{theorem}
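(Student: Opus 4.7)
The plan is to prove the upper and lower tail bounds separately and combine them via a union bound, each via the standard Cram\'er--Chernoff (exponential Markov) method. For the upper tail I would fix $t>0$ and apply Markov's inequality to $e^{tX}$, obtaining
\[
\Pr[X \geq (1+\delta)\mu] \;\leq\; \frac{\mathbb{E}[e^{tX}]}{e^{t(1+\delta)\mu}}.
\]
Independence of the $X_i$ lets the moment generating function factor as $\mathbb{E}[e^{tX}] = \prod_{i=1}^n (1 + p(e^t-1))$, and the elementary bound $1+x \leq e^x$ (valid for all real $x$) applied to each factor gives $\mathbb{E}[e^{tX}] \leq e^{\mu(e^t-1)}$.

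Next I would choose $t = \ln(1+\delta) > 0$, which minimizes the resulting expression, to obtain the raw upper-tail Chernoff bound
\[
\Pr[X \geq (1+\delta)\mu] \;\leq\; \left(\frac{e^\delta}{(1+\delta)^{1+\delta}}\right)^{\!\mu} \;=\; e^{-\mu\,((1+\delta)\ln(1+\delta) - \delta)}.
\]
To convert this into the form in the statement, I would verify the scalar inequality $(1+\delta)\ln(1+\delta) - \delta \geq \delta^2/3$ for $\delta \in (0,1)$ by a routine calculus check: setting $f(\delta) := (1+\delta)\ln(1+\delta) - \delta - \delta^2/3$, one has $f(0)=0$ and $f'(\delta) = \ln(1+\delta) - 2\delta/3 \geq 0$ on $[0,1]$ (immediate from $f'(0)=0$, the sign of $f''(\delta) = 1/(1+\delta) - 2/3$, and the endpoint value $f'(1) = \ln 2 - 2/3 > 0$). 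Exponentiating yields $\Pr[X \geq (1+\delta)\mu] \leq e^{-\delta^2\mu/3}$.

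For the lower tail I would run the mirror-image argument with $t = \ln(1-\delta) < 0$ in the Markov step, arriving at
\[
\Pr[X \leq (1-\delta)\mu] \;\leq\; \left(\frac{e^{-\delta}}{(1-\delta)^{1-\delta}}\right)^{\!\mu} \;\leq\; e^{-\delta^2\mu/2} \;\leq\; e^{-\delta^2\mu/3},
\]
where the middle inequality is the analogous scalar check on $(0,1)$. A union bound over the two tails then delivers $\Pr[|X-\mu| \geq \delta\mu] \leq 2e^{-\delta^2\mu/3}$. The only mildly technical step in the whole proof is the scalar inequality converting $e^\delta/(1+\delta)^{1+\delta}$ into the clean Gaussian-like bound $e^{-\delta^2/3}$; everything else is just Markov's inequality plus independence of the $X_i$. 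Since the theorem is invoked here as a textbook citation rather than reproved in the paper, one could equivalently just defer to the references; the sketch above is the self-contained route.
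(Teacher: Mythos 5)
Your proof is correct and is the standard Cram\'er--Chernoff argument (Markov on $e^{tX}$, independence, $1+x\le e^x$, optimize $t$, then the scalar inequality $(1+\delta)\ln(1+\delta)-\delta\ge\delta^2/3$ on $(0,1)$, mirror for the lower tail, union bound). The paper does not prove this theorem at all---it is stated as a black-box citation to \cite{chernoff1952measure,mitzenmacher2017probability}---so there is nothing in the paper to compare against; your self-contained derivation matches the textbook proof those references give.
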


\begin{theorem}[{\cite[Theorem 1.9]{Warnke2016}}] \label{lem:concentration_permutation}
Let $N \in \mathbb{N}$ and let $S_N$ be the set of all bijections from  $[N]$ to $[N]$. Let $M>0$. Suppose that $f \colon S_N \rightarrow \mathbb{R}$ is such that for all $\pi, \pi' \in S_N$ which differ in exactly two values, we have
    \[
        |f(\pi) - f(\pi')| \le  M.
    \]
    If $\pi \in S_N$ is chosen uniformly at random, then
    \[
        \Pr(\left|f(\pi)- \mathbb{E}(f(\pi))\right| \ge t ) \le 2\exp\left(-\frac{t^2}{8NM^2}\right).
    \]
\end{theorem}

\section{The vortex and the absorber} \label{section:vortex}
In this section, we first define the vortex and the absorber used for the proof of \Cref{thm:main_spread}. We then prove, in \Cref{lem:vortex}, that we can sample them from an appropriate spread probability measure. In the following, we often abuse notation by denoting both the vertex set and color set by $[n]$.

\begin{definition}[Vortex] \label{def:vortex}      
Let $0<\beta ,\delta, \epsilon <1<N$, and let $\mathbb{G}=\{G_1,\ldots,G_n\}$ be a collection of graphs on~$[n]$. We say that a pair of partitions $(\cV,\cC)$, $\cV=(V_1,\ldots,V_N)$ and $\cC=(C_1,\ldots,C_N)$ of $[n]$ is a $(\beta,\delta, \epsilon, N, \mathbb{G})$ vertex-color vortex if the following are satisfied.
\begin{itemize}
\item[(i)] $|V_{i+1}|= (1\pm \beta/10) \beta^i n$ for $1\le i\le N-1$, hence $|V_{i+1}|= (1\pm \beta) \beta |V_i|$ for $1\le i\le N-1$; 
\item[(ii)] $|C_i| = (1\pm \beta^4)(1-\beta^3)|V_i|$ for $2\le i\le N-1$;
\item[(iii)] $\delta^{-1}\le |C_N| /|V_N| \le 2\delta^{-1}$;
\item[(iv)] $d_{G_j}(v,V_l)\ge (1/2-\epsilon)|V_l|$ for $l\in\{i,i+1 \}$, $v\in V_i \cup V_{i+1}  $ and $j\in C_i\cup C_{i+1}$,  $1\le i\le N-1$.  
\end{itemize}
We denote the set of such vortices by $\cV\cC(\beta,\delta, \epsilon, N, \mathbb{G})$.
\end{definition}

\begin{definition}[Absorber]\label{def:absorber}
Let $0<\beta ,\delta, \epsilon <1<N$, and let $\mathbb{G}=\{G_1,\ldots,G_n\}$ be a collection of  graphs on~$[n]$. For partitions $\cV=(V_1,\ldots,V_N)$ and $\cC=(C_1,\ldots,C_N)$ of $[n]$ such that $(\cV,\cC) \in \cV\cC(\beta,\delta, \epsilon, N, \mathbb{G})$, a set of colors $C_{abs}\subseteq C_1$ and a set of edges $M_{abs}$ spanned by $V_1$, we say that the pair $(M_{abs},C_{abs})$ is $(V_N,C_N)$ absorbing if for every subset $A\subseteq C_N$ of size $(1-\beta^3)|V_N|$, there exists a bijection $\phi:M_{abs}\to C_{abs}\cup (C_N\setminus A)$ such that $e\in G_{\phi(e)}$ for every $e\in M_{abs}$.
\end{definition}

Since we use deterministic arguments to find a Hamilton cycle in the last part of the partition $V_N$, we must ensure that $|V_N|$ is small enough. This is because, as indicated by \eqref{eq:toy_spread}, the spreadness of the resulting distribution will depend on $|V_N|$. In our application, it turns out that taking $|V_N|=o(\log n)$ works. We choose to set $|V_N|$ as a sufficiently large constant (this will always be a function of a sufficiently large number~$L$) as this simplifies the underlying calculations.

\begin{definition}[Vortex with absorbing power] \label{def:vortexabsorber}
Let $0<\alpha, \alpha', \beta ,\delta, \epsilon <1 <L,N$. Let $\mathbb{G}=\{G_1,\ldots,G_n\}$ be a collection of graphs on $[n]$ that is not $\alpha$-extremal. We define $\mathcal{S}(\alpha,\alpha',\beta,\delta,\epsilon,N,\mathbb{G})$ to be the set of $4$-tuples $(\cV,\cC,M_{abs},C_{abs})$, with $\cV=(V_1,\ldots,V_N)$ and $\cC=(C_1,\ldots,C_N)$, such that  $(\cV,\cC)\in \cV\cC(\beta,\delta,\epsilon,N, \mathbb{G})$, $(M_{abs},C_{abs})$ is $(V_N,C_N)$ absorbing,    $|M_{abs}|,|C_{abs}|\le L^4$ and $\mathbb{G}_{C_i}[V_i]$ is not $\alpha'$-extremal for every $i\in [N]$. 
\end{definition}

In the following lemmas and in \Cref{thm:main_spread_non_extremal}, we slightly weaken the minimum degree condition imposed on the graph family $\mathbb{G}$ from $n/2$ to $(1/2-\eta)n$. This modification will ease the deduction of \Cref{thm:main_distinct_packing} in the non-extremal case. The following is the main result of this section.
\begin{lemma}[Vortex-absorber lemma] \label{lem:vortex}
Let $0<1/n \ll 1/L , \eta  \ll \epsilon \ll \delta, \beta \ll  \alpha' \ll \alpha \le 1$. Let $N=N(\beta,n,L)$ be the minimum integer such that $\beta^{N-1} n \in [L, 2\beta^{-1} L]$. Let $\mathbb{G}=\{G_1,\ldots,G_n\}$ be a collection of graphs on $[n]$ of minimum degree at least $(1/2-\eta)n$ that is not $\alpha$-extremal. Then, there exists a distribution $\mathcal{D}_{VC}$ on elements $(\cV,\cC,M_{abs},C_{abs})$ of $\cS(\alpha,\alpha',\beta,\delta,\epsilon, N,\mathbb{G})$ such that if $(\cV,\cC,M_{abs},C_{abs})$ is generated according to $\mathcal{D}_{VC}$ then, with $\cV=(V_1,\ldots,V_N)$ and $\cC=(C_1,\ldots,C_N)$, the following holds. For every $m\in [n]$, quadruple of sets $A=\{a_1,\ldots,a_m\}\subseteq V(G)$, $B=\{b_1,\ldots,b_m\}\in [N-1]^m$, $C=\{c_1,\ldots,c_m\}\subseteq [n]$, $D=\{d_1,\ldots,d_m\}\in [N-1]^m$, and quadruple of sets $U_1,U_N,Q_1,Q_N\subseteq [n]$, we have 
\begin{align}\label{eq:partitions}
\Pr_{\mathcal{D}_{VC}}\left( U_N\subseteq V_N \bigwedge Q_N \subseteq C_N \bigwedge U_1 \subseteq V(M_{abs}) \bigwedge Q_1 \subseteq C_{abs} \bigwedge \left(\bigwedge_{i\in [m]} \set{ a_i \in V_{b_i}  \text{ and } c_i \in C_{d_i}}\right) \right) \nonumber
\\\le 2\bfrac{L^4}{n}^{|U_1|+|U_N|+|Q_1|+|Q_N|}\prod_{i=1}^m\frac{2|V_{b_i}|}{n} \prod_{i=1}^m \frac{2|C_{d_i}|}{n}.
\end{align}
\end{lemma}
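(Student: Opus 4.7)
The plan is to construct $\mathcal{D}_{VC}$ in two approximately independent stages: first sample the vortex $(\cV,\cC)$, then the absorber $(M_{abs},C_{abs})$ conditional on the vortex. The sampling will be conditioned on a collection of good events (each holding whp) that guarantee membership in $\cS(\alpha,\alpha',\beta,\delta,\epsilon,N,\mathbb{G})$; this conditioning only distorts probabilities by a $(1+o(1))$ factor, easily absorbed by the factors of $2$ in (\ref{eq:partitions}).

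For the vortex, generate $\cV=(V_1,\ldots,V_N)$ by choosing a uniformly random ordering of $[n]$ and cutting it into consecutive blocks of sizes dictated by (i) and (iii); generate $\cC=(C_1,\ldots,C_N)$ independently and analogously for (ii) and (iii). Properties (i)--(iii) are then satisfied by construction. For property (iv), fix $v,j,l$ and observe that the expectation of $d_{G_j}(v,V_l)$ is at least $(1/2-\eta)|V_l|$ since $\delta(G_j)\geq(1/2-\eta)n$; Chernoff (with $|V_l|\geq L$ and $\epsilon\gg\eta$) yields strong concentration, and a union bound over the $O(n^2N)$ triples establishes (iv) whp. For non-extremality of $\mathbb{G}_{C_N}[V_N]$, invoke \Cref{lem:property preserve} after coupling the uniform fixed-size partition with an independent binomial subsample at densities $p=|V_N|/n$ and $q=|C_N|/n$, both of which exceed $L/n$.

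For the absorber, select a random matching $M_{abs}$ in $V_1$ of some size $O(L^3)$ and a uniformly random subset $C_{abs}\subseteq C_1$ of matching size. The critical property to verify is that for \emph{every} leftover set $A\subseteq C_N$ of size $(1-\beta^3)|V_N|$, the bipartite ``valid-coloring'' graph $B$ on $(M_{abs},C_{abs}\cup(C_N\setminus A))$, with $(e,c)$-edges whenever $e\in G_c$, has a perfect matching. Since each $G_c$ is Dirac, each edge $e\in M_{abs}$ satisfies $|\{c\in C_{abs}\cup C_N:e\in G_c\}|\geq(1/2-o(1))|C_{abs}\cup C_N|$ whp; a Chernoff-plus-Hall argument, using the non-extremality of $\mathbb{G}_{C_N}[V_N]$ guaranteed by stage one to exclude bad saturating subsets of $M_{abs}$, produces the required bijection uniformly in $A$. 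All sizes stay within the budget $2L^4$.

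For the spread bound (\ref{eq:partitions}): under a uniform random partition of $[n]$ with prescribed sizes $n_1,\ldots,n_N$ we have $\Pr(\bigwedge_i a_i\in V_{b_i})=\prod_b \frac{n_b(n_b-1)\cdots(n_b-m_b+1)}{n(n-1)\cdots(n-m+1)}\leq \prod_i \frac{2|V_{b_i}|}{n}$ whenever $m\leq\sqrt{n}$ (and the bound is trivial otherwise), where $m_b$ is the number of $i$ with $b_i=b$; an identical estimate holds for colors. Each of the small sets $V_N,C_N,V(M_{abs}),C_{abs}$ has size at most $2L^4$ and lies inside $V_1$ or $C_1$ of size $(1\pm\beta)n$, so the probability a specified element belongs to any of them is at most $O(L^4/n)$; approximate independence across distinct elements (with errors absorbed into the global factor of $2$) yields (\ref{eq:partitions}). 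The main obstacle is producing the absorber: designing a small matching whose valid-coloring graph against $C_{abs}\cup C_N$ admits a perfect matching for every permissible leftover $A$ while each vertex/color is selected with probability only $O(L^4/n)$. This requires a two-layer random construction (first a pool of candidate ``switcher'' edges drawn uniformly from $V_1\times V_1$, then a Hall-condition verification uniform in $A$ via concentration) and is the heart of the argument.
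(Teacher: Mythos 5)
Your high-level skeleton (sample a small random binomial pool, prune and condition on good events, verify the spread because the small sets live inside the pool) matches the paper's strategy, and your spread calculation is essentially the right one. The genuine gap is in the absorber step, and it is the heart of the lemma.

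You assert that for a random matching $M_{abs}$ in $V_1$, each edge $e\in M_{abs}$ belongs to $(1/2-o(1))|C_{abs}\cup C_N|$ of the graphs $G_c$ whp, and then appeal to a Chernoff-plus-Hall argument. This is false in general. A fixed edge $e$ is in $G_c$ for at least half the colors only \emph{on average}; the distribution can be bimodal. For instance, if roughly half the $G_c$'s restricted to the pool look like two cliques and half look like complete bipartite, an edge $e$ lies in essentially one cluster's graphs and none of the other's. Worse, the cluster sizes need not be balanced, so a random edge may see an arbitrarily small fraction of the colors. Consequently, a uniformly random matching together with a uniformly random color set $C_{abs}$ will generically not be $(V_N,C_N)$ absorbing: for a bad leftover set $A\subseteq C_N$, the bipartite ``valid-coloring'' graph on $(M_{abs},\,C_{abs}\cup(C_N\setminus A))$ will violate Hall's condition, and the non-extremality of $\mathbb{G}_{C_N}[V_N]$ alone does not repair this because it says nothing about how the individual $G_c$'s intersect one another.

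The paper handles this with a structural claim you have not supplied. It first extracts a subset $C^*$ of the sampled colors with the pairwise compatibility property (P): for any two colors $c_1,c_2\in C^*$ there are $\Omega(|C^*|)$ intermediate colors $c$ such that $G_c$ shares many edges with both $G_{c_1}$ and $G_{c_2}$. Proving that such a $C^*$ of size $\Omega(|C'|)$ exists is the substance of Claim~3.6, which runs a nontrivial case analysis to show that if the sampled family cannot be made coherent in this sense, then the sampled family is actually $O(\alpha'')$-extremal, contradicting \Cref{lem:property preserve}. With (P) in hand, the absorber is built \emph{deterministically} from explicit two-edge switching paths $e_r, e_1(r,s), e_2(r,s)$ with colors $c_1(r,s), c_2(r,s)$, chosen greedily so that they are pairwise disjoint; the absorbing bijection for an arbitrary $A$ is then assembled by composing these switches. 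Your sketch gestures at ``switcher edges'' but gives no mechanism for selecting them coherently, and without property (P) there may not even exist a $C_{abs}\subseteq C_1$ for which the construction can go through. This is the missing idea, and it is where the lemma's difficulty lives.
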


Before proving this lemma, we collect a couple of auxiliary lemmas that will be useful in our proof.
\begin{lemma}\label{lem:complement of extremal graph}
Let $0<1/n \ll \eta \ll \alpha \le 1$. Let $G_1$ and $G_2$ be two graphs on a vertex set $V$ of size $n$ with at most $0.1\alpha n^2$ common edges. Suppose that $G_1$ has minimum degree at least $(1/2-\eta)n$ and is $\alpha$-extremal with respect to a half-set $U\subseteq V$. Then, $G_2$ is $3\alpha$-extremal with respect to $U$. 
\end{lemma}
\begin{proof}
Let the graph $G^*$ be the complement of $G_1$. Then, $G_2$ can be turned into a subgraph of $G^*$ by removing at most $0.1\alpha n^2$ edges. Now, since $G_1$ has minimum degree at least $(1/2-\eta)n$ and $G_1$ is $\alpha$-extremal with respect to $U$, the graph $G^*$ is $(2\alpha +\eta)$-extremal with respect to $U$. As a consequence, since $G_2$ can be turned into a subgraph of $G^*$ by removing at most $0.1\alpha n^2$ edges, $G_2$ is $(2\alpha +\epsilon +0.1\alpha)$-extremal with respect to $U$. Thus, $G_2$ is $3\alpha$-extremal with respect to $U$. 
\end{proof}

\begin{lemma}\label{lem:preserve minimum degree}
Let $ \eta, \epsilon \in (0,1)$ be such that $\eta\le \epsilon/10$. Let $\mathbb{G}=\{G_1,\ldots,G_n\}$ be a collection of graphs on a vertex set $V$ of size $n$ with minimum degree at least $(1/2-\eta)n$. Let $p_1,q_1,p_2\in [0,1]$ and let $C=[n]$. Let $V_1, V_2\subseteq V$ be disjoint sets and $C_1\subseteq C$ sampled according to $V(p_1), V(p_2), C(q_1)$, respectively. Then, with probability at least $1 - 2(e^{-\epsilon^2 np_1/3\cdot 100^2}+ e^{-\epsilon^2 nq_1/3\cdot 100^2}+n^2p_1q_1(e^{-\epsilon^2 np_1/12}+e^{-\epsilon^2 np_2/12}))$, the following hold:
\begin{itemize}
    \item[(I)] $||V_1|-np_1|\le \epsilon np_1/100$ and $||C_1|-nq_1|\le \epsilon nq_1/100$,
    \item[(II)] for every $v\in V_1$, $c\in C_1$, and $i\in [2]$ we  have that  $d_{G_c[V_i]}(v)\ge (1/2 - \epsilon) |V_i|$. (In particular, $\mathbb{G}_{C_1}[V_1]$ is a family of graphs with minimum degree at least $(1/2-\epsilon)|V_1|$).
\end{itemize}
\end{lemma}
\begin{proof}
Denote the events that (I) and (II) hold by $\cE_I$ and $\cE_{II}$ respectively. By the Chernoff bound (\Cref{chernoff}), the probability that $\cE_I$ does not hold is at most $ 2e^{-\epsilon^2 np_1/3\cdot 100^2}+ 2e^{-\epsilon^2 nq_1/3\cdot 100^2}$. Thereafter, note that for $v\in V$ and $c\in C$, in the event $v\in V_1$ and $c\in C_1$, we have that for $i\in [2]$ the quantity $d_{G_c[V_i]}(v)$ dominates a $Bin((1/2-\eta)n ,p_i)$ random variable. In particular, $d_{G_c[V_i]}(v)<(1/2 - \epsilon) |V_i|$ with probability at most $ 2e^{-\epsilon^2 np_i/12}$, by the Chernoff bound. It follows that $\cE_{II}$ fails to hold with probability at most
\begin{align*}
   &\sum_{i\in [2]} \sum_{v\in V} \sum_{c\in C} \Pr(v\in V_1 \text{ and }c\in C_1 \text{ and }d_{G_c[V_i]}(v)<(1/2 - \epsilon) |V_i|)
   \\&= \sum_{i\in [2]} \sum_{v\in V} \sum_{c\in C} \Pr(v\in V_1 \text{ and }c\in C_1)   \Pr(d_{G_c[V_i]}(v)<(1/2 - \epsilon) |V_i| \big| v\in V_1 \text{ and }c\in C_1 )
   \\&\le \sum_{i\in [2]} \sum_{v\in V} \sum_{c\in C} p_1q_1  2e^{-\epsilon^2 np_i/12}
   =2n^2p_1q_1(e^{-\epsilon^2 np_1/12}+e^{-\epsilon^2 np_2/12}). 
\end{align*}
\end{proof}

We are now ready to prove \Cref{lem:vortex}.
\begin{proof}[{\bf Proof of \Cref{lem:vortex}}]
We choose $\alpha''$ such that the following hierarchy of constants holds. 
\[
0 < 1/n \ll 1/L, \eta \ll \epsilon \ll \delta , \beta \ll \alpha' \ll \alpha'' \ll \alpha \le 1.
\]
We first construct the pairs $(M_{abs},V_{abs})$ and $(V_N,C_N)$. Then we later extend them to an element of $\cS(\alpha,\alpha',\beta,\delta,\epsilon, N,\mathbb{G})$. Denote the vertex set of $\mathbb{G}$ by $V$ and the color set by $C$. Let $p=L^4/(2n)$ and $V'=V(p)$ and $C'=C(p)$. Let $\cE$ be the event that the following hold.
\begin{itemize}
    \item $\max\{||V'|-L^4/2|, ||C'|-L^4/2| \}\le \beta^2L^4$, hence $L^4/4 \le |V'|, |C'| \le L^4$.
    \item $G_c[V']$ has minimum degree at least $(1/2 -\epsilon/2)|V'|$ for every color $c\in C'$.
    \item The family $\mathbb{G}_{C'}[V']$ is not $50\alpha''$-extremal.
\end{itemize}
\Cref{lem:property preserve,lem:preserve minimum degree} imply that $\cE$ holds with probability at least $8/10$. 
\begin{claim}\label{claim:constructing the absorber}
If the event $\cE$ holds, then there is a subset $C^*\subseteq C'$ with $|C^*|\ge \alpha'' |C'|/2$ such that the family $\mathbb{G}_{C^*}[V']$ is not $\alpha''$-extremal and the following property holds.
\begin{itemize}
\item[{\normalfont (P)}] For every pair of colors $c_1,c_2\in C^*$, there are at least $\alpha'' |C^*|$ colors $c\in C^*\setminus \{c_1,c_2\}$ such that the graph $G_{c}[V']$ intersects $G_{c_i}[V']$ in at least $0.1\alpha'' |V'|^2$ edges for each $i\in [2]$.
\end{itemize}
\end{claim}
\begin{proof}
If the property~(P) holds with $C^*=C'$, then set $C^*=C'$. If (P) does not hold with $C^*=C'$, then there exist two colors $a,b\in C'$ and a subset $C''\subseteq C'\setminus \{a,b\}$ such that $|C''|\ge(1-\alpha'')|C'|$, and for every $c\in C''$, the graph $G_c[V']$ intersects at least one of $G_a[V']$, $G_b[V']$ in at most $0.1\alpha''|V'|^2$ edges. Let $A\subseteq C''$ (and $B$ respectively) be the set of all colors $c\in C''$ such that $G_c[V']$ intersects $G_a[V']$ (and $G_b[V']$ respectively) in at most $0.1\alpha''|V'|^2$ edges. Note that $A\cup B= C''$. Without loss of generality, assume that $|A|\ge |B|$. We next show that if the event $\cE$ holds, then at least one of the following holds.
\begin{itemize}
    \item[(1)] For every $c_1,c_2\in C''$, the graphs $G_{c_1}[V'],G_{c_2}[V']$ intersect in at least $0.1\alpha'' |V'|^2$ edges.
    \item[(2)] The family $\mathbb{G}_A[V']$ is not $\alpha''$-extremal.
    \item[(3)] $|B|\ge \alpha'' |C'|$ and $\mathbb{G}_B[V']$ is not $\alpha''$-extremal. 
\end{itemize}
Assume that $\cE$ occurs. Assume further that (1) and (2) do not hold and let $V_A\subseteq V'$ be a half-set such that $\mathbb{G}_A[V']$ is $\alpha''$-extremal with respect to $V_A$. If $|B|<\alpha''|C'|$ then 
\begin{align*}
    \sum_{G\in \mathbb{G}_{C'}[V']} \min\{e_G(V_A),e_G(V_A,\overline{V_A})\} & \le \sum_{G\in \mathbb{G}_{A}[V']} \min\{e_G(V_A),e_G(V_A,\overline{V_A})\}\} + |B||V'|^2+|C'\setminus C''||V'|^2
    \\& \le \alpha'' |A| |V'|^2+ 2\alpha'' |C'| |V'|^2 \le 3\alpha''|C'||V'|^2,
\end{align*}
thus $\mathbb{G}_{C'}[V']$ is $10\alpha''$-extremal contradicting the occurrence of $\cE$. Therefore, $|B|\ge \alpha''|C'|$. Now furthermore, for the sake of a contradiction, assume that $\mathbb{G}_B[V']$ is $\alpha''$-extremal with respect to a half-set $V_B\subseteq V'$. 

Under the assumption that (2) does not hold (i.e., $\mathbb{G}_A[V']$ is $\alpha''$-extremal with respect to $V_A$), there exists $G'\in \mathbb{G}_A[V']$ such that ${\min\{e_{G'}(V_A),e_{G'}(V_A,\overline{V_A})\} \le \alpha''|V'|^2}$.
Therefore, since the minimum degree of $G'$ is at least $(1/2-\epsilon)|V'|$, and $G',G_a[V']$ intersect in at most $0.1\alpha''|V'|^2$ edges, and $\epsilon\ll \alpha''$, by \Cref{lem:complement of extremal graph}, we have that $G_a[V']$ is $3\alpha''$-extremal with respect to $V_A$. By another application of \Cref{lem:complement of extremal graph} with the same reasoning, every graph $G\in \mathbb{G}_A[V']$ is $9\alpha''$-extremal with respect to $V_A$. 

By definition of $B$, every graph $G\in \mathbb{G}_B[V']$ intersects $G_b[V']$ in at most $0.1\alpha'' |V'|^2$ edges, and intersects the complement of $G_b[V']$, denoted by $\overline{G_b[V']}$, in at least $(1/4-\alpha'')|V'|^2$ edges. Therefore, since the graph $\overline{G_b[V']}$ has at most $(1/4+\epsilon)|V'|^2$ edges, for every graph $G\in \mathbb{G}_B[V']$, the graph $\overline{G_b[V']}$ contains at most $(\alpha''+\epsilon) |V'|^2$ edges that do not belong to $G$. Thus, for every $G_1,G_2\in \mathbb{G}_B[V']$, we have that 
\begin{align*}
    |E(G_1)\triangle E(G_2)| &\le |E(G_1)\cap E(G_b[V'])|+ |E(G_2)\cap E(G_b[V'])|+ |(E(G_1)\triangle E(G_2))\cap E( \overline{G_b[V']})|
    \\&\le 0.1\alpha''|V'|^2+ 0.1\alpha''|V'|^2+|E( \overline{G_b[V']})\setminus E(G_1)|+| E( \overline{G_b[V']})\setminus E(G_2)|
    \\& \le 0.2\alpha''|V'|^2+ (\alpha''+\epsilon) |V'|^2+ (\alpha''+\epsilon) |V'|^2 \le 3\alpha''|V'|^2.
\end{align*}
Since (1) does not hold, assume that $c_1,c_2\in C''$ are such that $G_{c_1}[V'], G_{c_2}[V'] $ intersect in fewer than $0.1\alpha'' |V'|^2$ edges. Then $c_1$ and $c_2$ do not belong to the same set in $\{A,B\}$, since, as seen by the calculation above, for $c,c'\in B$ (similarly for $c,c'\in A)$, $G_c[V']$ and $G_{c'}[V']$ intersect in at least $(1/4-\epsilon-3\alpha'')|V'|^2 > 0.1\alpha'' |V'|^2$ edges. Thus, we may assume that $c_1\in A$ and $c_2\in B$.

Now, since $G_{c_1}[V']$ has minimum degree at least $(1/2-\epsilon)|V'|$ and is $9\alpha''$-extremal with respect to $V_A$ (as $c_1\in A$), and $G_{c_1}[V'], G_{c_2}[V']$ intersect in fewer than $0.1\alpha'' |V'|^2$ edges, by \Cref{lem:complement of extremal graph}, $G_{c_2}[V']$ is $27\alpha''$-extremal with respect to $V_A$. Furthermore, for every $G\in \mathbb{G}_B[V']$, since $c_2\in B$, we have $|E(G)\triangle E(G_{c_2})|\le 3\alpha''|V'|^2$ by the calculation above, thus $G$ is $(27\alpha'' +3\alpha'')\le 30\alpha''$-extremal with respect to $V_A$. Recall that every graph in $\mathbb{G}_A[V']$ is $9\alpha''$-extremal with respect to $V_A$. It follows that $\mathbb{G}_{C''}[V']$ is $30\alpha''$-extremal and consequently, $\mathbb{G}_{C'}[V']$ is $31\alpha''$-extremal with respect to $V_A$. This contradicts the occurrence of $\cE$. This shows that if $\cE$ holds, then one of the events among (1)--(3) must hold.

Note that the property (P) always holds with $C^*=A$ (similarly with $C^*=B$). Indeed, for $c_1,c_2,c\in A$, each of $G_{c_1}[V'],G_{c_2}[V']$, and $G_{c}[V']$ can be turned into a subgraph of the complement of $G_a[V']$ by removing at most $0.1\alpha''|V'|^2$ edges. Since each of these graphs has minimum degree at least $(1/2-\epsilon)|V'|$, the number of edges $G_{c}[V']$ has in common with each of $G_{c_1}[V'],G_{c_2}[V']$ is at least 
\[
3|V'| (1/2-\epsilon)|V'|/2 - 2\cdot 0.1\alpha''|V'|^2 - \binom{|V'|}{2} \ge (1/4 -3\epsilon/2 -0.2\alpha'')|V'|^2 \ge 0.1\alpha'' |V'|^2.
\]
Finally, if (1) holds, then set $C^*=C''$; otherwise, set $C^*=A$ if (2) holds, else set $C^*=B$. In every case, the property (P) holds, the family $\mathbb{G}_{C^*}[V']$ is not $\alpha''$-extremal, and $|C^*|\ge \alpha''(1-\alpha'')|C'|\ge \alpha'' |C'|/2$. This completes the proof of \Cref{claim:constructing the absorber}.
\end{proof}

We finally let $V_N = V'(p')$, where $p' = \beta^{N-1}  /(L^4/2)$, and let $C_N = C^*(q')$, where $q'=1.5\delta^{-1} \beta^{N-1}n/|C^*|$. If the event $\cE$ holds, then the facts that $|V'|,|C'|\ge L^4/4$ and $|C^*|\ge \alpha'' |C'|/2$, and the condition $1/L \ll \alpha'',\beta,\delta$ imply that $p',q'<1$. Let $\cE_N$ be the event that the following hold:
\begin{itemize}
    \item[(i)] the event $\cE$ holds; 
    \item[(ii)] $|V_N|= (1\pm \beta/10)\beta^{N-1}n$ (recall that $N$ is chosen so that $\beta^{N-1}n\in [L,2\beta^{-1}L]$);         
    \item[(iii)] $\delta^{-1}\le |C_N|/|V_N|\le 2\delta^{-1}$ (thus $|C_N|\le 5\beta^{-1}\delta^{-1}L$);
    \item[(iv)] the graph family $\mathbb{G}_{C_N}[V_N]$ is not $\alpha'$-extremal and has minimum degree at least $(1/2-\epsilon)|V_N|$.
\end{itemize}
First recall that $\Pr(\cE)\ge 8/10$. Thereafter in the event $\cE$, by \cref{claim:constructing the absorber}, we have that $\mathbb{G}_{C^*}[V']$ is not $\alpha''$-extremal. Since $V_N=V'(p')$ and $C_N=C^*(q')$, \Cref{lem:property preserve} gives that conditioned on the event $\cE$, the family $\mathbb{G}_{C_N}[V_N]$ is not $\alpha'$-extremal with probability at least $99/100$. Moreover, since $V_N=V'(p')=V(pp')$, \Cref{lem:preserve minimum degree} gives that $|V_N|= (1\pm \beta/10)\beta^{N-1}n$ and the graph family $\mathbb{G}_{C'}[V_N]\supseteq \mathbb{G}_{C_N}[V_N]$ has minimum degree at least $(1/2-\epsilon)|V_N|$ with probability at least $99/100$. Finally, since $C_N = C^*(q')$, conditioned on the event $|V_N|= (1\pm \beta/10)\beta^{N-1}n$, the Chernoff bound gives that $\delta^{-1}\le |C_N|/|V_N|\le 2\delta^{-1}$ with probability at least $99/100$. Everything together gives that the event $\cE_N$ holds with probability at least $7/10$.

\begin{claim}\label{clm:absorber}
If the event $\cE_N$ holds, then there is a set of colors $C_{abs}\subseteq C^*\setminus C_N$ and a matching $M_{abs}$ spanned by $V'\setminus V_N$ such that $|M_{abs}|, |C_{abs}|\le L^4$ and $(M_{abs},C_{abs})$ is $(V_N,C_N)$ absorbing.
\end{claim}
\begin{proof}
Let $V^*=V'\setminus V_N$. Since $\cE_N$ holds and $1/L \ll \alpha'', \beta, \delta$, we have
\begin{equation}\label{eq:VNCN}
|V_N|\le |C_N|\le 5\beta^{-1}\delta^{-1}L \le \alpha''^2 L^4/4 \le\alpha''^2 |V'|\le 2\alpha''^2 |V^*|,
\end{equation}
where the last inequality uses the already established fact that $|V_N|\le \alpha''^2 |V'|$. Since the property (P) holds, by \eqref{eq:VNCN}, the property (P*) also holds.
\begin{itemize}
\item[(P*)] For every pair $c_1,c_2\in C^*$, there are at least $\alpha'' |C^*|$ colors $c\in C^*\setminus \{c_1,c_2\}$ such that the graph $G_{c}[V^*]$ intersects $G_{c_i}[V^*]$ in at least $0.05\alpha'' |V^*|^2$ edges for each $i\in [2]$.
\end{itemize}
(P*) implies that for every $c\in C_N\subseteq C^*$, there exist at least $0.05\alpha''|V^*|^2|C^*|$ pairs $(c',e)\in C^*\times E(G_{c}[V^*])$ such that $e$ belongs to both $G_{c}[V^*]$ and $G_{c'}[V^*]$. Thus, the number of edges of $G_{c}[V^*]$ that can be colored with at least $2|C_N|^2$ colors from $C^*$ is at least $(0.05\alpha''|V^*|^2|C^*| - 2|C_N|^2 \cdot |V^*|^2/2)/ |C^*| \ge 0.01\alpha''|V^*|^2$, where we use the following inequality. 
\begin{equation}\label{eq:CstarvsCN}
0.05\alpha''|C^*|\ge 0.05(\alpha'')^2|C'|/2 \ge 0.05(\alpha'')^2 L^4/8 \ge 2(5\beta^{-1}\delta^{-1}L)^2\ge 2|C_N|^2.
\end{equation}
Thus, since $|C_N|\le 2\alpha''^2|V^*|$ (by \eqref{eq:VNCN}), there exists a $C_N$-transversal matching $M$ on $V^*$ such that every edge in $M$ can be colored with at least $2|C_N|^2$ colors from $C^*$. 

Let $R,S\subseteq C_N$ be an arbitrary partition of $C_N$ into two disjoint sets where $S$ has size $(1-\beta^3)|V_N|$. For every $r\in R$, let $e_r$ be the edge with color $r$ in $M$. Now for each pair of colors $r\in R$, $s\in S$, we will identify a pair of edges $e_1(r,s)$, $e_2(r,s)$ and a pair of colors $c_1(r,s)$, $c_2(r,s)$ such that
\begin{itemize}
    \item the edges $e_r$ and $e_1(r,s)$ can be colored with the color $c_1(r,s)$;
    \item the edges $e_1(r,s)$ and $e_2(r,s)$ can be colored with the color  $c_2(r,s)$;
    \item the edge $e_2(r,s)$ can be colored with the color $s$. 
\end{itemize}
Thus the set $\{e_r,e_1(r,s),e_2(r,s)\}$ can be rainbow-colored using either the colors $r$, $c_1(r,s)$, and $c_2(r,s)$ or the colors $s$, $c_1(r,s)$, and $c_2(r,s)$. We will also ensure that these edges are pairwise non-adjacent and the colors are pairwise different. 

For that, let $(r_i,s_i)$, $i\in [|R||S|]$ be an ordering of the pairs in $R\times S$, and assume that so far, we have chosen suitable edges $e_1(r_j,s_j),e_2(r_j,s_j)$ and colors $c_1(r_j,s_j),c_2(r_j,s_j)$ for every $j<i$. In particular, so far, we have ruled out at most $2|C_N|^2-2$ colors. Call these colors unavailable and the rest available. We first choose the colors $c_1(r_i,s_i)$ and  $c_2(r_i,s_i)$. Since $e_{r_i}$ belongs to $M$, $e_{r_i}$ can be colored with at least $2|C_N|^2$ colors from $C^*$. Let $c_1(r_i,s_i)$ be one of these colors that is available. Then, given the color $c_1(r_i,s_i)$, by (P*) and \eqref{eq:CstarvsCN}, there exist at least $\alpha''|C^*|\ge 2|C_N|^2$ colors $c\in C^*$ such that $G_c[V^*]$ intersects each of $G_{c_1(r_i,s_i)}[V^*]$ and $G_{s_i}[V^*]$ in at least $0.05 \alpha'' |V^*|^2$ edges. Choose $c_2(r_i,s_i)$ to be an available such color that is distinct from $c_1(r_i,s_i)$. We continue and choose the edges $e_1(r_i,s_i)$ and $e_2(r_i,s_i)$. Choose the edge $e_1(r_i,s_i)$ from $G_{c_1(r_i,s_i)}[V^*]\cap G_{c_2(r_i,s_i)}[V^*]$ and the edge $e_2(r_i,s_i)$ from $G_{c_2(r_i,s_i)}[V^*]\cap G_{s_i}[V^*]$ subject to the condition that the set of edges chosen so far induces a matching (this condition rules out at most $2|C_N|^2|V^*|\le 2(5\beta^{-1}\delta^{-1}L)^2|V^*| \le 0.01\alpha'' (L^4/8)|V^*|\le 0.01 \alpha'' |V^*|^2$ out of the at least $0.05 \alpha'' |V^*|^2$ available edges). We set $M_{abs}=\bigcup_{i\in [|R||S|]}\{e_1(r_i,s_i),e_2(r_i,s_i)\} \cup\{e_r:r\in R\}$ and $C_{abs}=\bigcup_{i\in [|R||S|]}\{c_1(r_i,s_i),c_2(r_i,s_i)\}$.

Note that for every subset $A\subseteq C_N$ of size $|S|$, there exists a bijection $\phi:M_{abs}\to C_{abs}\cup (C_N\setminus A)$ such that $e\in G_{\phi(e)}$ for $e\in M_{abs}$. Indeed, first let $\psi:S\setminus A \to  A\setminus S$ be a bijection. For every $r\in R,s\in S$ such that $\{r,s\}$ is not a subset of $S \triangle A$, or $\{r,s\}$ is a subset of $S \triangle A$ but $\psi(s)\neq r$, set $\phi(e_i(r,s))=c_i(r,s)$ for $i\in [2]$. Thereafter, for $c\in R\setminus (S\triangle A)$, set $\phi(e_c)=c$. Finally, for each pair $r\in A\setminus S, s\in S\setminus A$ such that $\psi(s)=r$, set $\phi(e_r)=c_1(r,s)$, $\phi(e_1(r,s))=c_2(r,s)$, and $\phi(e_2(r,s))=s$. Then, $\phi(M_{abs})=C_{abs}\cup (C_N\setminus A)$, as desired. This finishes the proof of \Cref{clm:absorber}.
\end{proof}

Now we construct the pair of partitions $(\cV,\cC)$. For that, first generate $V_N, C_N$ as described before, and then if $\cE_N$ holds, then generate $M_{abs}, C_{abs}$ according to \Cref{clm:absorber}. Thus with $p=L^4/(2n)$ and $p'=\beta^{N-1}/(L^4/2)$ we have that $V_N,V(M_{abs})\subseteq V'=V(p)$, $V_N=V'(p')=V(pp')$ and $C_N,C(M_{abs})\subseteq C'=C(p)$. 
Now, let $V^-=V\setminus V'$ and  $C^-=C\setminus C'$. Let $\mathbf{p}=(p_1,\ldots,p_{N-1})$ be such that $p_i=\beta^{i-1}$ for $2\le i\le N-1$ and $p_1=1-\sum_{i=2}^{N-1}p_i$.
Also, let $\mathbf{q}=(q_1,\ldots,q_{N-1})$ be such that $q_i=(1-\beta^3) p_{i}$ for $2\le i\le N-1$ and $q_1=1-\sum_{i=1}^Nq_i$. 
To construct $\cV=(V_1,\ldots,V_N)$, first generate $(V_1',\ldots,V_{N-1}')$ by independently assigning each vertex $v\in V^-$ to the set $V_i'$ with probability $p_i$. Then set $V_1=V_1'\cup (V'\setminus V_N)$, and $V_i=V_i'$ for $2\le i\le N-1$. Similarly, construct the partition $(C_1',\ldots,C_{N-1}')$ of the colors by assigning independently each color $c\in C^-$ to the set $C_i'$ with probability $q_i$. Then set $C_1=C_1'\cup (C'\setminus C_N)$, and $C_i=C_i'$ for $2\le i\le N-1$. 

We will first show that the generated quadruple $(\cV,\cC,M_{abs},C_{abs})$ belongs to   $\cS(\alpha,\alpha',\beta,\delta,\epsilon,N,\mathbb{G})$ with probability at least $1/2$. For this, recall that $\Pr(\cE_N)\ge 7/10$. If $\cE_N$ holds, then the property (iii) in \Cref{def:vortex} holds, $\mathbb{G}_{C_N}[V_N]$ is not $\alpha'$-extremal and has minimum degree at least $(1/2-\epsilon)|V_N|$, $|V_N|=(1\pm  \beta/10)\beta^{N-1}n$, and by \Cref{clm:absorber}, the sets $M_{abs}, C_{abs}$ satisfy the relevant properties in \Cref{def:absorber,def:vortexabsorber}. Thereafter note that, with $p_1'=(1-p)p_1+p(1-p')$, $p_N'=pp'$ and $p_i'=(1-p)p_i$ for $2\le i\le N-1$, we have that $V_i=V(p_i')$ for $i\in [N]$. In addition, with $q_i'=(1-\beta^3)p_i'$ for $2\le i\le N-1$ and $q_1'=(1-p)q_1+p$ we have that $C_i=C(q_i')$ for $2\le i\le N-1$ and $C_1\subseteq C_1'\cup C'= C(q_1')$. \Cref{lem:preserve minimum degree} gives that the event $\cE_N$  and the properties (i), (ii), and (iv) in \Cref{def:vortex} hold with probability at least 
\[
\frac{7}{10} - \sum_{i=1}^{N-1} 2\left(e^{-\epsilon^2 np_i'/3\cdot 100^2}+ e^{-\epsilon^2 nq_i'/3\cdot 100^2}+n^2p_i'q_i'\left(e^{-\epsilon^2 np_i'/12}+e^{-\epsilon^2 np_{i+1}'/12}\right)\right) 
\ge \frac{6}{10}.
\]
(Formally, here for the index $i=1$ we apply \Cref{lem:preserve minimum degree} with the set  $C_1'\cup C'$ in place of $C_1$, hence $d_{G_j}(v,V_l)\ge (1/2-\epsilon)|V_l|$ for $l\in\{1,2 \}$, $v\in V_1 \cup V_2$, and $j\in C_1'\cup C'\cup C_2$.)
\Cref{lem:property preserve} gives that, with probability at least $9/10$, $\mathbb{G}_{C_1'\cup C'}[V_1]$ is not $2\alpha'$-extremal, and for every $2\le i \le N-1$, we have $\mathbb{G}_{C_i}[V_i]$ is not $\alpha'$-extremal. Note that if $\cE_N$ occurs (in particular if $|C'|\le L^4$), then the facts $|C_1'|\ge \beta n/2$ and $\mathbb{G}_{C_1'\cup C'}[V_1]$ is not $2\alpha'$-extremal imply that $\mathbb{G}_{C_1}[V_1]$ is not $\alpha'$-extremal. The above argument ensures that the generated quadruple $(\cV,\cC,M_{abs},C_{abs})$ belongs to $\cS(\alpha,\alpha',\beta,\delta,\epsilon,N,\mathbb{G})$ with probability at least $1/2$. 

We define $\mathcal{D}_{VC}$ to be the distribution on elements of $(\cV,\cC,M_{abs},C_{abs})\in \cS(\alpha,\alpha',\beta,\delta,\epsilon,N,\mathbb{G})$ such that $\Pr_{\mathcal{D}_{VC}}(\cV,\cC,M_{abs},C_{abs})$ equals the probability that $(\cV,\cC,M_{abs},C_{abs})$ is output by the above procedure divided by the probability that it outputs an element of $\cS(\alpha,\alpha',\beta,\delta,\epsilon,\mathbb{G})$. The latter probability is at least $1/2$ and corresponds to the leading constant $2$ in \eqref{eq:partitions}. Now, a vertex $v$ belongs to $V_N\cup V(M_{abs})$ only if it belongs to $V'$. Thus, each vertex belongs to $V_N\cup V(M_{abs})$ with probability at most $p\le L^4/n$ and to $V_i$ with probability at most $p_i' \le 2|V_i|/n$ independently from the rest of the vertices and the partition $\mathcal{C}$ (the inequality $p_i' \le 2|V_i|/n$ follows from $(\cV,\cC,M_{abs},C_{abs})\in \cS(\alpha,\alpha',\beta,\delta,\epsilon,\mathbb{G})$ - see \Cref{def:vortex}~(i)). Similarly, each color belongs to $C_N\cup C_{abs}$ with probability at most $p\le L^4/n$ and to $C_i$ with probability at most $q_i' \le 2|C_i|/n$ independently from the rest of the colors and the partition $\mathcal{V}$. The remaining terms in \eqref{eq:partitions} follow from the above independence. This finishes the proof of \Cref{lem:vortex}.
\end{proof}

\section{Proof of the non-extremal case}

In this section, we prove \Cref{thm:main_spread,thm:main_distinct_packing} when $\mathbb{G}$ is a non-extremal family. We start by describing and analyzing the cover-down step. 

\subsection{The cover-down step} 
We will construct an $O(1/n^2)$-spread distribution on a disjoint union of paths. We will find these paths by sampling a perfect matching in an auxiliary bipartite graph.

\begin{definition}\label{def:bipartite_auxiliary_graph}
Let  $\mathbb{G}=\{G_1,\ldots,G_{m}\}$ be a graph collection with $|V(\mathbb{G})|\le m$. For an injection $\sigma$ from $V(\mathbb{G})$ to $[m]$, we define $B_\sigma$ to be the bipartite graph on $X\cup Y$, where both $X,Y$ are copies of $V(\mathbb{G})$, and for $x\in X$, $y\in Y$ the edge $xy$ belongs to $B_\sigma$ if and only if $xy$ belongs to $G_{\sigma(x)}$. 
\end{definition}
In the above definition, $\sigma$ determines the ``color" of the edges incident to a given vertex $v\in X\subseteq V(B_\sigma)$. In addition, since $\sigma$ is an injection, every color in $[m]$ appears on the edges incident to at most one vertex of~$X$. If we let $|V(\mathbb{G})|=n$, then $B_{\sigma}$ is a bipartite graph on $2n$ vertices and a matching of $B_{\sigma}$ of size $(1-\alpha)n$ corresponds to a $\mathbb{G}$-rainbow collection of paths and cycles that span in total $(1-\alpha)n$ edges.

Starting with a non $\alpha$-extremal family we first show that for a uniformly random injection $\sigma$, the graph $B_\sigma$ inherits a non-extremal feature. Namely, there is no small edge-cut that bipartitions the vertex set (we call such bipartite graphs non-extremal). This is done in \cref{lem:sample_bijection}. We later sample a large matching of $B_{\sigma}$.
\begin{definition}(Extremal bipartite graphs)
Let $\alpha>0$. We say that a bipartite graph $G$, with bipartition $X\cup Y$, is $\alpha$-extremal if there exist half-sets $A\subseteq X$ and $B\subseteq Y$ such that $e(A,B)\le \alpha |X||Y|$. 
\end{definition}

\begin{lemma}\label{lem:sample_bijection} 
Let $0<1/n \ll \epsilon  \ll  \alpha \ll 1$ and $\mathbb{G}=\{G_1,\ldots,G_{m}\}$ be a graph collection with $|V(\mathbb{G})|=n\le m\le 2n$. Suppose that $\mathbb{G}$ is a non $\alpha$-extremal family of graphs with minimum degree at least $(1/2 -\epsilon)n$. If $\sigma$ is an injection from $V(\mathbb{G})$ to $[m]$ chosen uniformly at random, then whp $B_\sigma$ is not $(\alpha^7/100)$-extremal.
\end{lemma}
\begin{proof}
Let $V:=V(\mathbb{G})$ and $C:=[m]$ denote the index set of $\mathbb{G}$. For a half-set $T\subseteq  V$ and a vertex $v\in V$, let $D(v,T)$ be the set of colors $c\in C$ such that $v$ has at most $\alpha^7 n/10$ neighbors in $T$ in $G_c$. For a half-set $T\subseteq  V$, let $D(T)$ be the set of vertices $v\in V$ such that $|D(v,T)|\ge n/10^4$. Call a half-set $T\subseteq V$ \textit{dangerous} if $|D(T)|\ge n/10^4$. We first deal with the non-dangerous sets.

We denote by $X\cup Y$ the bipartition of the vertex set of $B_\sigma$, as given by \Cref{def:bipartite_auxiliary_graph}.

\begin{claim}\label{claim:non_dangerous}
Whp there do not exist half-sets $S\subseteq X$ and $T\subseteq Y$ such that $T$ is not dangerous and $e_{B_\sigma}(S,T)\le \alpha^7 n^2/100$.
\end{claim}
\begin{proof}
Let $T\subseteq V$ be a non-dangerous half-set. Fix a set $\{v_1,\ldots,v_{9n/10}\}\subseteq V$. Since $T$ is not dangerous, among $\{v_1,\ldots,v_{9n/10}\}$ there exist at least $8n/10$ vertices $v_i$ such that $|D(v_i,T)|\le n/10^{4}$. In the event that for at least $6n/10$ of these vertices $v_i$ we have that $\sigma(v_i)\notin D(v_i,T)$, then as every half-set $S$ contains at least $n/10$ of these vertices, we have that $e_{B_\sigma}(S,T) > (\alpha^7 n/10)\cdot n/10 = \alpha^7 n^2/100$. 

Now, for each vertex $v_i$ with $i\in [9n/10]$ such that $|D(v_i,T)|\le n/10^{4}$, we have that $\sigma(v_i)\in D(v_i,T)$ with probability at most $10^{-4}n/10^{-1}n= 1/1000$, independently from $\sigma(v_1),\dots \sigma(v_{i-1})$. Thus, the probability that there exist a half-sets $S\subseteq V$, $T\subseteq Y$ so that $T$ is not dangerous and $e_{B_\sigma}(S,T)\le \alpha^7 n^2/100$ is at most
\[
2^n \binom{8n/10}{2n/10} \bfrac{1}{1000}^{2n/10}\le 2^n \bfrac{8en/10}{2n/10}^{2n/10} \bfrac{1}{1000}^{2n/10} =\bfrac{2^5 \cdot 4e }{1000}^{2n/10}=o(1).
\]
\end{proof}
We let $\mathcal{T}$ be a maximal set of dangerous sets so that $|T\cap T'|\le (1/2-\alpha^7/2)n$ for every distinct $T,T'\in \mathcal{T}$.
\begin{claim}\label{claim:reduce number}
Suppose that for every $T\in \mathcal{T}$ and half-set $S\subseteq X$ we have that $e_{B_\sigma}(S,T)\ge \alpha^7 n^2 $. Then, $e_{B_\sigma}(S,T)\ge \alpha^7 n^2/2$ for every dangerous half-set $T\subseteq Y$ and half-set $S\subseteq X$. 
\end{claim}
\begin{proof}
Let $T$ be a dangerous half-set. Then, there exist $T'\in \mathcal{T}$ such that $|T\cap T'|\ge (1/2-\alpha^7/2)n$. Thus, under the given assumption, for every half-set $S\subseteq Y$ we have that
\[
e_{B_\sigma}(S,T)\ge e_{B_\sigma}(S,T')- |S||T'\setminus T|\ge \alpha^7 n^2 - n(\alpha^7 n/2) \ge  \alpha^7 n^2 /2.
\]
\end{proof}
We now bound the size of $\mathcal{T}$.
\begin{claim}\label{claim:t is small}
$|\mathcal{T}| \le 2\cdot 10^{8}$.
\end{claim}
\begin{proof}
For $v\in V$, $c\in C$, and half-sets $T,T'$ such that $|T\cap T'|\le (1/2-\alpha^7/2)n$, since $d_{G_c}(v)\ge (1/2-\epsilon)n$, we have that $v$ has at least $(\alpha^7/2-\epsilon)n/2 > \alpha^7 n/10$ neighbors in one of the sets $T,T'$ in $G_c$. Thus, there exists at most one set $T\in \mathcal{T}$ such that $v$ has at most $\alpha^7 n/10$ neighbors in $T$ in $G_c$. On the other hand, by definition, for each element of $\mathcal{T}$ there exist at least $10^{-8}n^2$ such vertex-color pairs. Thus $|\mathcal{T}| \cdot 10^{-8}n^2 \le nm\le 2 n^2$, giving that $|\mathcal{T}| \le 2\cdot 10^{8}$.
\end{proof}
We finally prove that whp no set in $\mathcal{T}$ induces a small cut in $B_\sigma$ whp. 
\begin{claim}\label{claim:dangerous}
Whp there do not exist dangerous half-set $T\subseteq Y$ and half-set $S\subseteq X$ such that ${e_{B_\sigma}(S,T)< \alpha^7 n^2/2}$.
\end{claim}
\begin{proof}
We will show that for fixed $T\in \mathcal{T}$, whp we have that $e_{B_\sigma}(S,T)\ge \alpha^7 n^2$ for every half-set $S\subseteq X$. Then, the union bound and \cref{claim:t is small} imply that whp for every $T\in \mathcal{T}$ and half-set $S\subseteq X$, we have that $e_{B_\sigma}(S,T)\ge \alpha^7 n^2$. Then, \cref{claim:reduce number} implies the statement of this claim.  

Fix $T \in \mathcal{T}$. For $c\in C$, let $V_c$ be the set of vertices $v\in V$ such that $d_{G_c}(v,T)\ge \alpha^4 n$. Let $B$ be the set of colors $c\in C$ such that $|V_c|\le (1/2 + 3\alpha^2)n$. Note that for every color $b\in B$, by possibly extending the set of vertices $V\setminus V_c$, there exists a half-set $S\subseteq V$ such that 
\[
e_{G_c}(T,S)\le 3\alpha^2 n \cdot n/2 + n/2 \cdot a^4n\le 2\alpha^2 n^2.
\]
Thus, \Cref{lem:KLS general} implies that $\min\{e_{G_c}(T),e_{G_c}(T,\overline{T}) \}\le 6\alpha^2 n^2$ for every $c\in B$. Since $\mathbb{G}$ is not $\alpha$ extremal we have that
\[
\alpha n^2m \le \sum_{G\in \mathbb{G}} \min\{e_{G_c}(T),e_{G_c}(T,\overline{T})\}
\le 6\alpha^2 n^2\cdot |B|+(n^2/2)(m-|B|).
\]
Thus $|B|\le (1-\alpha)m$. For every $c\notin B$, we have that $|V_c|> (1/2 + 3\alpha^2) n$. On the other hand, for $c\in B$, since $G_c$ has minimum degree at least $(1/2-\epsilon)n$, we have that $|V_c|\ge ((1/2-\epsilon)n|T|-\alpha^4 n^2)/|T|\ge (1/2-\epsilon-3\alpha^4)n$. It follows that
\begin{align*}
\sum_{c\in C} |V_c|&\ge (1/2 + 3\alpha^2) n \cdot \alpha m+ (1/2-\epsilon-3\alpha^4)n \cdot (1-\alpha)m 
\\&= nm/2 +3\alpha^3 nm -(\epsilon+3\alpha^4) (1-\alpha)nm \ge  nm/2 +2\alpha^3 nm.
\end{align*}

We can generate $\sigma$ by first choosing a set $C'\subseteq C = [m]$ of size $n$ and then a bijection from $V$ to $C'$, both choices done uniformly at random. \Cref{lem:concentration_permutation} (by setting $[N]$ as the color set $[m]$ and defining $C'$ to be the image of $[n]$ under the random bijection from $[m]$ to $[m]$) and the inequality $m\le 2n$ imply that whp $\sum_{c\in C'} |V_c|\ge n^2/2 +3\alpha^3 n^2/2$.

Now, let $Z$ be the number of colors $c\in C'$ that are mapped by $\sigma^{-1}$ to an element of $V_c$. Then, 
\[
\mathbb{E}\left(Z \; \bigg| \; \sum_{c\in C'} |V_c|\ge n^2/2 +3\alpha^3 n^2/2 \right)\ge (1/2+3\alpha^3/2)n.
\]
\Cref{lem:concentration_permutation} gives that $Z\ge (1/2+\alpha^3)n$ whp. In the event that $Z\ge (1/2+\alpha^3)n$ we have that every half-set $S$ contains at least $\alpha^3 n$ vertices $v$ such that $d_{G_{\sigma(v)}}(v,T)\ge \alpha^4 n$. Thus $e_{B_\sigma}(S,T) \ge \alpha^3 n \cdot \alpha^4 n = \alpha^7 n^2$, completing the proof.
\end{proof}
Claims~\ref{claim:non_dangerous} and~\ref{claim:dangerous} give that whp $B_\sigma$ is not $(\alpha^7/100)$-extremal. This completes the proof of \cref{lem:sample_bijection}. 
\end{proof}

\begin{definition}[Cover-down structure]
Given two disjoint vertex sets $U,V$ and a matching $M$ on $U$, we say that $\cP$ is a $(U,M)$-internal $V$-external path covering, $(U,V,M)$-PC for short, if $\cP$ is a collection of vertex-disjoint paths whose union covers both $U$ and $M$, and for $P\in \cP$ the endpoints of $P$ lie in $V$ and the rest of its vertices in $U$. Furthermore, given a graph collection $\mathbb{G}=\{G_1,\ldots,G_{m}\}$ on $U\cup V$, we define a $(U,V,M,\mathbb{G})$-PC to be a $(U,V,M)$-PC such that the set of edges of the underlying paths that do not belong to $M$ is $\mathbb{G}$-transversal. 
\end{definition}

We will construct a spread measure on $(U,V,M,\mathbb{G})$-PCs in the following lemma. 
\begin{lemma}[Cover-down lemma] \label{lem:cover_down}
Let $0< 1/n  \ll \epsilon, \delta, \gamma \ll  \alpha \ll 1$. Let $C$ be a color set which we identify with $[|C|]$. Let $U,V$ be disjoint sets and $\mathbb{G}=\{G_1,\ldots,G_{|C|}\}$ be a graph family on $U\cup V$ such that 
\begin{itemize}
    \item[(i)] $\mathbb{G}[U]$ is not $\alpha$-extremal and every graph in $\mathbb{G}[U]$ has minimum degree at least $(1/2-\epsilon)|U|$,
    \item[(ii)] $d_{G_i}(u,V)\ge |V|/3$ for all $u\in U$ and $i\in [|C|]$.
\end{itemize}
In addition, let $M$ be a matching on $U$ of size at most $\gamma|U|$. Assume that $|U|=n $, $|V|\le |U|$, and $|C|-|U|+|M|=\delta |U| \le |V|/8$. Then there exists a $(100\delta^{-1}/|V|^2)$-spread distribution $\mathcal{D}_2$ on the $\mathbb{G}$-transversal set of edges $E$ with the property that $E\cup M$ spans a $(U,V,M,\mathbb{G})$-PC.
\end{lemma}
\begin{proof}
We first construct the distribution $\mathcal{D}_2$ and then show that it is $(100\delta^{-1}/|V|^2)$-spread. 

Let $U':= U \setminus V(M)$. First, let $\sigma'$ be an injection from $U'$ to $C$ chosen uniformly at random and let $C'=\sigma'(U')$ be its image. Let $U''\subseteq V(M)$ be such that for each edge $e$ in $M$ the set $U''$ contains exactly one of the endpoints of $e$. Let  $\sigma''$ be an injection from $U''$ to $C\setminus C'$, chosen uniformly at random. Also set $C''=\sigma''(U'')$. Concatenate the maps $\sigma'$ and $\sigma''$ to the map $\sigma$ from $U'\cup U''$ to $C$.

Let $\tilde{B}_\sigma$ be the auxiliary graph on $X\cup Y$ where $X$ is a copy of $U'\cup U''$, $Y$ is a copy of $U\setminus U''$, and for every $x\in X,y\in Y$ the edge $xy$ belongs to $E(\tilde{B}_\sigma)$ if and only if $xy$ belongs to $G_{\sigma(x)}$. We can think that for $x\in X$, every edge incident to $x$ in $\tilde{B}_\sigma$ has color $\sigma(x)$. Let $p=\delta^{-1}/n$ and let $(\tilde{B}_{\sigma})_p$ be the random subgraph of $\tilde{B}_\sigma$ obtained by retaining each edge with probability $p$ independently. Let $M'$ be a maximum matching of $(\tilde{B}_{\sigma})_p$. Note that $M'$ corresponds to a $\mathbb{G}$-rainbow set of edges $E'$ with the property that every vertex in $U$ is incident to at most 2 edges in $E'\cup M$. Let $\mathcal{E}$ be the event that $M'$ has size at least $(1-\delta/2)|U| - |M|$ and $E'\cup M$ spans at most $\delta|U|/2$ many cycles.

Throughout this paragraph, assume the event $\mathcal{E}$ holds. Then, the number of connected components of the graph on $U$ induced by $E'\cup M$ is exactly $|U| - |M| - |M'| + (\text{\# cycles spanned by $E'\cup M$}) \le \delta |U|$, where the inequality uses that $\mathcal{E}$ holds. Now, by removing an edge from each cycle spanned by $E'\cup M$ that does not belong to $M$, and potentially a small number of additional edges not in $M$, we obtain a set of edges $E''\subseteq E'$ such that $E''\cup M$ induces a linear forest $\mathcal{P}'$ of $\delta|U|$ many paths (here we consider a vertex to be a path of length~$0$) where $\mathcal{P}'$ spans $U$. Let $C'''\subseteq C$ be the set of colors not appearing on $E''$. Then, $|C'''| = |C| - |E(\mathcal{P}')| = |C| - (|U| - |M| - \delta |U|) = 2\delta |U|$, where the last equality follows from an assumption of \Cref{lem:cover_down}. Let $U'''$ be the multiset of $2\delta |U|$ endpoints of paths in $\mathcal{P}'$ (for each path $v\in \mathcal{P}'$ of length $0$, there are two copies of $v$ in $U'''$). Let $\sigma'''$ be a bijection from $U'''$ to $C'''$ chosen uniformly at random. For every $u\in U'''$, we will now (randomly) choose an edge from $G_{\sigma'''(u)}$ between $u$ and $V$ so that no two of these edges are incident to a vertex in $V$. Formally, set $E'''=\emptyset$ initially and let $U''' = \{u_1,\ldots,u_\ell\}$. Sequentially, for every $i\in [\ell]$, choose an edge of $G_{\sigma'''(u_i)}$ from $u_i$ to $V$ that is not incident to any edge in $E'''$ (constructed so far) uniformly at random, and add it to $E'''$. With the final set $E'''$ obtained this way, let $E=E''\cup E'''$. By construction, in the event $\mathcal{E}$, the set $E\cup M$ spans a $(U,V,M,\mathbb{G})$-PC.

We let $\mathcal{D}_2$ be the distribution on $\mathbb{G}$-rainbow sets of edges so that the probability that $\mathcal{D}_2$ assigns to such a set $F$ equals the probability that the above procedure terminates with $E=F$ conditioned on the event $\mathcal{E}$, divided by the probability that $\mathcal{E}$ occurs. 
\begin{claim}
Suppose that the event $\mathcal{E}$ occurs with probability at least $1/2$. Then $\mathcal{D}_2$ is $(100\delta^{-1}/|V|^2)$-spread.  
\end{claim}
\begin{proof}
Let $F$ be a $\mathbb{G}$-rainbow set of edges with colors given by the injection $c:F \to C$. Let $F''\subseteq F$ denote the set of all edges with both endpoints in $U$, and $F'''\subseteq F$ denote the set of all edges with one endpoint in $U$ and the other in $V$. If the event $F\subseteq E$ holds, then we must have $F = F''\cup F'''$ (which we assume from now on without loss of generality) and $F''\subseteq E''$ and $F'''\subseteq E'''$. These happen only if the following occur:
\begin{itemize}
    \item[($\mathcal{E}_1$)] for every $e\in F''$, $\sigma$ maps one of the endpoints of $e$ to the color $c(e)$;
    \item[($\mathcal{E}_2$)] for every $e\in F''$, $e$ survives the sparsification step, i.e. it belongs to $(\tilde{B}_{\sigma})_p$;
    \item[($\mathcal{E}_3$)] for every $e\in F'''$, $\sigma'''$ maps one of the endpoints of $e$ to the color $c(e)$;
    \item[($\mathcal{E}_4$)] for every $e\in F'''$, $e$ is chosen (randomly) incident to one of its endpoints. 
\end{itemize} 
Recall that $\sigma$ is distributed uniformly at random from $U'\cup U''$ to $C$. Let $U'\cup U'' = \{u_1,\ldots,u_{|U'\cup U''|}\}$. Then one may generate  $\sigma$ by choosing $\sigma(u_i)$ uniformly at random from $(U'\cup U'')\setminus \{\sigma(u_1),\ldots,\sigma(u_{i-1})\}$ for every $i\in [|U'\cup U''|]$. It follows that for $\{u_1,\ldots,u_k\}\subseteq U'\cup U''$ and $\{c_1,\ldots,c_k\}\subseteq C$, we have 
\[
\Pr(\sigma(u_i) = c_i \text{ for every }i\in[k])= \prod_{i=0}^{k-1}\frac{1}{|C|-i} \le \bfrac{1}{|C|!}^{k/C}\le \bfrac{3}{|C|}^k.
\]
Thus, since there are $2^{|F''|}$ ways to specify the endpoints in the event $\mathcal{E}_1$, we have that $\Pr(\mathcal{E}_1)\le (6/|C|)^{|F''|}$. Similarly, $\Pr(\mathcal{E}_3)\le (6/2\delta|U|)^{|F'''|}$. It is clear that $\Pr(\mathcal{E}_2)\le p^{|F''|}$. For the event $\mathcal{E}_4$, note that item (ii) and the inequality $\delta|U|\le |V|/8$, imply that for every vertex $u\in U'''$, given the color $\sigma'''(u)$, there are at least $|V|/3 -2\delta |U| \ge |V|/12$ many choices for choosing the edge of color $\sigma'''(u)$ from $u$ to $V$. Therefore, 
\begin{align*}
\frac{\Pr(F\subseteq E \; | \; \mathcal{E})}{\Pr(\mathcal{E})}&\le 2\cdot \bfrac{6}{|C|}^{|F''|}  p^{|F''|} \bfrac{6}{2\delta|U|}^{|F'''|} \bfrac{12}{|V|}^{|F'''|} 
\\&\le 2\cdot \bfrac{6p}{|C|}^{|F''|} \bfrac{36}{\delta|U||V|}^{|F'''|}\le 2\bfrac{50 \delta^{-1}}{|V|^2}^{|F|}.
\end{align*} 
Hence $\mathcal{D}_2$ is $(100\delta^{-1}/|V|^2)$-spread.
\end{proof}
In order to complete the proof of \Cref{lem:cover_down}, by the previous claim, it remains to prove that the event $\mathcal{E}$ occurs with probability at least $1/2$. This follows from the next couple of claims. 

\begin{claim}\label{claim:perfect matching}
$(\tilde{B}_\sigma)_p$ spans a matching of size at least $(1-\delta/2)|U| - |M|$ with probability at least $3/4$.    
\end{claim}

\begin{claim}\label{claim:few cycles}
$E'\cup M$ spans fewer than $\delta |U|/2$ cycles with probability at least $3/4$. 
\end{claim}

\begin{proof}[{\bf Proof of \Cref{claim:perfect matching}}]
Using (i) and $|V(M)|\le 2\gamma |U|$, we deduce that every graph in $\mathbb{G}[U']$ has minimum degree at least $(1/2-\epsilon-2\gamma)|U|\ge (1/2-\epsilon-2\gamma)|U'|$. Furthermore, $\mathbb{G}[U']$ is not $(\alpha/2)$-extremal. Indeed, for any half-set $A'\subseteq U'$, there exists a half-set $A\subseteq U$ such that $|A\triangle A'|\le |U\triangle U'| = |U\setminus U'| \le 2\gamma|U|$, and therefore
\begin{align*}
\sum_{c\in C}\min\{e_{G_c}(A'), e_{G_c}(A',U'\setminus A')\}&\ge \sum_{c\in C}\min\{e_{G_c}(A), e_{G_c}(A,U\setminus A)\} - |U\setminus U'||U||C|
\\&\ge \alpha |C||U|^2 -2\gamma |C||U|^2 > \alpha |C||U|^2/2.  
\end{align*}

Let $\cE'$ be the event that $B_{\sigma'}$ is not $((\alpha/2)^7/100)$-extremal (with $B_{\sigma'}$ defined as in \Cref{def:bipartite_auxiliary_graph}). \Cref{lem:sample_bijection} implies that $\cE'$ occurs with probability at least $99/100$. We claim that if the event $\cE'$ holds, then the graph $\tilde{B}_\sigma$ is not $(\alpha/10)^7$-extremal. Indeed, for each pair of half-sets $S\subseteq X$ and $T\subseteq Y$ there exists a pair of sets $S'\subseteq X$ and $T'\subseteq Y$ so that $|S\triangle S'|, |T\triangle T'|\le \gamma |U|$ and $S',T'$ are half-sets of $U'$. Thus,
\begin{align*}
e_{\tilde{B}_\sigma}(S,T) &\ge e_{B_{\sigma'}}(S',T')-(|S\triangle S'|+|T\triangle T'|)|U|
\\&\ge ((\alpha/2)^7/100)|U'|^2 -2\gamma |U|^2 > (\alpha/10)^7 |U|^2 \ge (\alpha/10)^7 |X||Y|.  
\end{align*}

Note that (i) implies that every vertex in $X$ has degree at least $(1/2-\epsilon-\gamma)n$ in $\tilde{B}_\sigma$. Let $\cE''$ be the event that $\tilde{B}_\sigma$ has minimum degree at least $(1/2-\epsilon-2\gamma)n$. \Cref{lem:concentration_permutation} implies that with probability at least~$9/10$ every vertex in $Y$ has degree at least $(1/2-\epsilon-2\gamma)n$ in $\tilde{B}_\sigma$. Thus $\cE''$ occurs with probability at least~$9/10$. 

Recall that $|U|=n$ and $M'$ is a maximum matching of $(\tilde{B}_\sigma)_p$. For every $k\in [n-|M|]$, let $\cE_k$ be the event that there exists a pair of such sets $S\subseteq X, T\subseteq Y$ with $|S|=k$, $|T|=k - \delta  n/2$, and there is no edge from $S$ to $Y\setminus T$ in $(\tilde{B}_\sigma)_p$ (thus $N_{(\tilde{B}_\sigma)_p}(S)\subseteq T$). Then, since $|X|=|Y|=n-|M|$, Hall's theorem implies that if $M'$ has size smaller than $(1-\delta/2)n-|M|$, then there exists $\delta n/2 \le k\le n-|M|$ such that the event $\cE_k$ occurs. 

We first deal with the case $\delta n/2 \le k\le (1/2- \epsilon -\gamma-10\delta^{1/2})n$. For such $k$, using the fact that $p=\delta^{-1}/n$ and every vertex in $X$ has degree at least $(1/2-\epsilon-\gamma)n$ in $\tilde{B}_\sigma$, we have
\begin{align*}
    \Pr(\cE_k) &\le \binom{n-|M|}{k}\binom{n-|M|}{k-\delta n/2} (1-p)^{k((1/2-\epsilon-\gamma)n-k)} \le \binom{n}{k}^2 e^{-10\delta^{1/2} np k}
    \\&\le \bfrac{en}{k}^{2k} \left(e^{-10\delta^{1/2}np}\right)^k \le \left(4e^2\delta^{-2} e^{-10\delta^{-1/2}}\right)^k =o(1/n). 
\end{align*}
Thereafter, for $(1/2- \epsilon -\gamma-10\delta^{1/2})n\le k \le (1/2+\epsilon+2\gamma + 10 \delta^{1/2})n$, note that in the event $\cE'$, for $S\subseteq X$ and $T\subseteq Y$ such that $|S|=k$ and $|T|=k-\delta n/2$, using that $\epsilon,\delta,\gamma\ll \alpha$, we have that $e_{\tilde{B}_\sigma}(S,T)\ge (\alpha/10)^7 n^2/2$. Thus, 
\begin{equation*}
    \Pr(\cE_k \; | \; \cE') \le \binom{n-|M|}{k}\binom{n-|M|}{k-\delta n/2} (1-p)^{(\alpha/10)^7 n^2/2} \le 2^n\cdot 2^n \cdot e^{-p (\alpha/10)^7 n^2/2} =o(1/n),
\end{equation*}
where the last equality follows from $(pn)^{-1}=\delta \ll \alpha$.

Finally, for $(1/2+\epsilon+ 2\gamma + 10 \delta^{1/2})n \le k\le n-|M|$, note that in the event $\cE''$ the graph $\tilde{B}_\sigma$ has minimum degree at least $(1/2-\epsilon-2\gamma)n$. Hence, for $S\subseteq X$ and $T\subseteq Y$ such that $|S|=k$ and $|T|=k-\delta n/2$, the number of edges in $\tilde{B}_\sigma$ from $Y\setminus T$ to $S$ is at least 
\[
|Y\setminus T|(\delta(\tilde{B}_\sigma) - |X\setminus S|) \ge ((n-|M|)-(k-\delta n/2))((1/2-\epsilon-2\gamma)n-(n-k)) \ge (n-|M|-k+\delta n/2) \cdot 10 \delta^{1/2}n.
\]
Thus, we have
\begin{align*}
    \Pr(\cE_k \; | \; \cE'') &\le \binom{n-|M|}{k}\binom{n-|M|}{k-\delta n/2} (1-p)^{10 \delta^{1/2}n \cdot (n-|M|-k+\delta n/2)}
    \\&\le \binom{n-|M|}{n-|M|-k+\delta n/2}^2(1-p)^{10 \delta^{1/2}n \cdot (n-|M|-k+\delta n/2)}
    \\&\le \left(\bfrac{e(n-|M|)}{n-|M|-k+\delta n/2}^2 e^{-p\cdot 10\delta^{1/2}n}\right)^{n-|M|-k+\delta n/2}
    \\&\le \left( \bfrac{en}{\delta n/2}^2 e^{-10\delta^{-1/2}}\right)^{\delta n/2} =o(1/n). 
\end{align*}
Now, by a union bound over $k$ between $\delta n/2$ and $n-|M|$, whp it holds that conditioned on the events $\cE'$ and $\cE''$, none of the events $\cE_k$ occur. It follows that $(\tilde{B}_\sigma)_p$ contains a matching of size at least $(1 -\delta /2)n - |M|$ with probability at least $1-o(1)-\Pr(\cE')-\Pr(\cE'')\ge 3/4$, as desired.    
\end{proof}

\begin{proof}[{\bf Proof of \Cref{claim:few cycles}}]
A cycle $C$ of length $k$ that is spanned by $E'\cup M$ can be described by choosing $\ell\le k/2$ vertices $u_1,\ldots,u_\ell \in V(M)$, $k-2\ell$ vertices $v_1,v_2,\ldots,v_{k-2\ell} \in U\setminus V(M)$, and choosing a cyclic permutation of these vertices $w_1,w_2,\ldots,w_{k-\ell},w_{k-\ell+1}=w_1$ such that the edges are as follows
\[
E(C)=\{w_iw_{i+1}: w_i\notin V(M)\} \cup \{w_ix:w_i\in V(M),w_ix\in M\} \cup \{xw_{i+1}:w_i\in V(M),w_ix\in M\}.
\]
The edges of such a cycle belong to $E'\cup M$ with probability at most $p^{k-\ell}$. Thus the expected number of cycles of length at most $8\delta^{-1}$ spanned by $E'\cup M$ is at most
\[
\sum_{k=1}^{8\delta^{-1}}\sum_{\ell=1}^{k/2} |U|^{k-\ell}p^{k-\ell}=O(1).
\]
Thus Markov's inequality gives that $E'\cup M$ spans at most $\delta n/8$ cycles of length at most $8\delta^{-1}$ with probability at least $3/4$. In this event $E'\cup M$ spans at most $\delta n/8 +  n/(8\delta^{-1}) < \delta n/2$ cycles in total, as desired.
\end{proof}
This finishes the proof of the cover-down lemma, i.e., \Cref{lem:cover_down}.
\end{proof}

\subsection{Putting everything together} \label{subsec:puttingeverythingtogether}
In this final subsection, we prove the following. 
\begin{theorem}\label{thm:main_spread_non_extremal}
Let $0 <1/n \ll 1/c \ll \alpha \le 1$ and $0\le \eta \ll \alpha$. For every non $\alpha$-extremal collection $\mathbb{G}=\{G_1,\ldots,G_n\}$ of graphs on $n$ vertices with minimum degree at least $(1/2-\eta)n$, there is a $(c/n^2)$-spread probability measure on the set of Hamilton $\mathbb{G}$-transversals.
\end{theorem}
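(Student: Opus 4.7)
The plan is to carry out the iterative absorption scheme outlined in Section~2, linking Lemma~\ref{lem:vortex} (vortex-absorber), Lemma~\ref{lem:cover_down} (cover-down), and Theorem~\ref{thm:robustdirac} (robust Dirac), and then to bound the spread of the induced distribution on Hamilton transversals. I will fix a hierarchy $0 < 1/n \ll 1/L, \eta \ll \delta \ll \epsilon \ll \alpha' \ll \alpha, \beta, \gamma \ll 1$ and let $N$ be as in Lemma~\ref{lem:vortex}. The first move is to sample a quadruple $(\cV, \cC, M_{abs}, C_{abs}) \in \cS(\alpha, \alpha', \beta, \delta, \epsilon, N, \mathbb{G})$ from the distribution $\mathcal{D}_{VC}$ of Lemma~\ref{lem:vortex}. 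I then set $M_1 := M_{abs}$ and declare the colors available at step $1$ to be $C_1' := C_1 \setminus C_{abs}$, which are the initial data for the iteration.

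For $i = 1, \ldots, N-1$, I will apply the cover-down lemma to $(V_i, V_{i+1}, M_i, \mathbb{G}_{C_i'}[V_i \cup V_{i+1}])$, taking $C_i' := C_i$ for $i \ge 2$. The minimum-degree hypothesis is Definition~\ref{def:vortex}(iv), and the window $\beta^6 |V_i| \le |C_i'| + |M_i| - |V_i| \le 0.1 |V_{i+1}|$ follows from (i)--(iii) of that definition together with an inductive bound on $|M_i|$. The resulting rainbow PC $\cP_i$ will be drawn from an $O(1/|V_i|^2)$-spread distribution; defining $M_{i+1}$ on $V_{i+1}$ as the matching that joins the two endpoints of each $P \in \cP_i$ (matching length-$0$ paths to themselves doubly) feeds into the next iteration with $|M_{i+1}| \le \gamma |V_{i+1}|$.

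At step $N$, I will be left with a matching $M_N$ on $V_N$ and the non-$\alpha'$-extremal family $\mathbb{G}_{C_N}[V_N]$. I will define $G^*$ as the graph on $V_N$ whose edges lie in $G_c[V_N]$ for at least $|V_N|$ colors $c \in C_N$; the excess $|C_N| \ge \delta^{-1}|V_N|$ together with an averaging argument should transfer the non-extremal and minimum-degree conditions from $\mathbb{G}_{C_N}[V_N]$ to $G^*$. Theorem~\ref{thm:robustdirac} applied to $G^* \cup M_N$ then yields a Hamilton cycle $H_N$ spanning $M_N$. A greedy coloring of $E(H_N) \setminus M_N$, justified by a Hall-type argument exploiting the excess of colors, uses a subset $A \subseteq C_N$ of size $|V_N| - |M_N|$. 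Calibrating the vortex so that $|M_N| = \beta^3 |V_N|$ matches the absorber's input size $(1 - \beta^3)|V_N|$, whereupon Definition~\ref{def:absorber} furnishes a rainbow coloring of $M_{abs}$ by $C_{abs} \cup (C_N \setminus A)$. Unfolding each abstract edge of $M_i$ into the corresponding real path of $\cP_{i-1}$ produces the desired Hamilton $\mathbb{G}$-transversal.

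For the spread bound, I will argue that the probability a fixed pair $(e,c)$ appears in the output can be bounded by summing, over levels $i$, the probability that the endpoints of $e$ land in $V_i \cup V_{i+1}$ (which is $O(|V_i|^2/n^2)$ by Lemma~\ref{lem:vortex}) times the conditional $O(1/|V_i|^2)$-spread of the cover-down at step $i$, giving $O(1/n^2)$ per pair; the multiplicative spread bounds at each level then extend this to arbitrary subsets of edge-color pairs. The hard part will be the parameter bookkeeping: synchronizing the vortex constants so that $|M_N|$ lands exactly at $\beta^3|V_N|$ for the absorber to apply, controlling $|M_i|$ inductively throughout to keep within the cover-down window, and ensuring that the auxiliary graph $G^*$ inherits a non-extremal structure strong enough to feed into Theorem~\ref{thm:robustdirac} --- this is where the separation between $\alpha'$ and $\alpha$ is consumed.
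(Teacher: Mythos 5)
Your proposal matches the paper's proof essentially step for step: sample a vortex-absorber quadruple from $\mathcal{D}_{VC}$ (Lemma~\ref{lem:vortex}), iterate the cover-down lemma across the nested levels while tracking $|M_i|=(\beta^3\pm 2\beta^4)|V_i|$ and $|M_N|=\beta^3|V_N|$ via the budget identity $|V|=|C|=n$, close the last level with Theorem~\ref{thm:robustdirac} applied to the auxiliary graph $G_N$ of edges colorable by at least $|V_N|$ colors in $C_N$, then color greedily and invoke the absorber. The spread calculation in the paper implements your per-pair sketch by partitioning a fixed colored edge set $F$ into $F_{abs}\sqcup F'$, summing over level-assignment data $Q$, and combining \eqref{eq:partitions} with the per-level cover-down spread --- exactly the accounting you anticipate.
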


\begin{proof}
Note that we can (and do) assume $1/c \ll \eta$ without loss of generality. Let $L, \delta, \epsilon, \beta, \alpha'$ be such that $0<1/n \ll 1/c \ll 1/L, \eta \ll \epsilon \ll \delta, \beta \ll \alpha' \ll \alpha \le 1$. Also, let $N=N(\beta,n,L)$ be the minimum integer such that $\beta^{N-1} n \in [L, 2\beta^{-1} L]$. Sample an element $(\cV,\cC,M_{abs},C_{abs})$ of  $\cS(\alpha,\alpha',\beta,\delta,\epsilon, N,\mathbb{G})$ according to the distribution $\mathcal{D}_{\cV\cC}$ given at the statement of \Cref{lem:vortex}. We will now note a few useful equalities involving the sizes of the sets in $\cV = (V_1,\ldots,V_N)$ and $\cC = (C_1,\ldots,C_N)$. 
Observe that $\sum_{j=1}^{N}|V_j|=n=\sum_{j=1}^{N}|C_j|$, thus for every $i\in [N-1]$, we have $\sum_{j=1}^{i}(|C_j|-|V_j|)=\sum_{j=i+1}^{N}(|V_j|-|C_j|)$.
Using this, for every $i\in [N-1]$, we have
\begin{align}
\sum_{j=1}^{i}(|C_j|-|V_j|)+(|M_{abs}|-|C_{abs}|)
&= \sum_{j=i+1}^{N}(|V_j|-|C_j|)+(|M_{abs}|-|C_{abs}|) \label{eq:simple relation}
\\&= \sum_{j=i+1}^{N-1} (\beta^3\pm \beta^4)|V_j| + \beta^3 |V_N| \nonumber
\\&= (\beta^3\pm 3\beta^4)|V_{i+1}| = (\beta^4\pm 4\beta^5)|V_i|. \label{eq:basic}
\end{align}
In the above, the second equality uses the following two facts. The first one is that ${|V_j|-|C_j|= (\beta^3\pm \beta^4)|V_j|}$ whenever $2\le j\le N-1$ (this holds because $(\cV,\cC,M_{abs},C_{abs})$ belongs to $\cS(\alpha,\alpha',\beta,\delta,\epsilon,N,\mathbb{G})$, see \cref{def:vortex}). 
The second one is that $(|V_N|-|C_N|)+(|M_{abs}|-|C_{abs}|)=\beta^3|V_N|$ which follows from the fact that the pair $(M_{abs},C_{abs})$ is $(V_N,C_N)$ absorbing (see \Cref{def:absorber}), and thus for every subset $A\subseteq C_N$ of size $(1-\beta^3)|V_N|$, there exists a bijection $\phi:M_{abs}\to C_{abs}\cup (C_N\setminus A)$. The equality in \Cref{eq:basic} follows from the fact that $|V_{i+1}|= (1\pm \beta/10) \beta^i n$ for $i\in [N-1]$ (see \cref{def:vortex}).

For $i\in [N-1]$, we will inductively construct a set $\cP_i$ of paths of positive lengths that is $(\cup_{j\in [i]} V_j, V_{i+1}, M_{abs})$-PC, and has the following property. Let $E_i=\bigcup_{P\in \cP_i}E(P)$, then $E_i\setminus M_{abs}$ is $(\cup_{j\in [i]}C_j)\setminus C_{abs}$-transversal. At the $N$-th step, we will join the paths in $(\cup_{j\in [N-1]} V_j, V_N, M_{abs})$-PC into a single Hamilton cycle $H$ on $V(\mathbb{G})$ using edges with colors in $C_N$. Finally, we will use the color-absorbing property of $(M_{abs},C_{abs})$ to produce a rainbow coloring of $H$. 

To do this, let $\cP_0=M_{abs}$. For each $i\in [N-1]$ and given $\cP_{i-1}$, let $M_i$ be the matching on $V_i$ such that $e\in M_i$ if and only if there exists a path $P\in \cP_{i-1}$ that joins the endpoints of $e$. We will first construct $\cP_1$ given the matching $M_1$ on $V_1$. For that, we can apply \Cref{lem:cover_down} to sample $\cP_1$, a $(V_1,V_2,M_1,\mathbb{G}_{C_1\setminus C_{abs}})$-PC with $(\beta^{-5}/|V_2|^2)$-spread distribution. For this, note that conditions (i) and (ii) of \Cref{lem:cover_down} are satisfied since $(\cV,\cC,M_{abs},C_{abs})$ belongs to $\cS(\alpha,\alpha',\beta,\delta,\epsilon,N,\mathbb{G})$, $|M_1| = |M_{abs}| \le 2L^4$, and $|C_1\setminus C_{abs}|-|V_1|+|M_1|= |C_1| - |V_1| + |M_{abs}| - |C_{abs}| = (\beta^4\pm 4\beta^5)|V_1|\le |V_2|/8$ (by \eqref{eq:basic}). 

Thereafter, for $2\le i \le N$, inductively, we assume that we are given $\cP_{i-1}$ (and thus also $M_i$) with the desired properties, and we will construct $\cP_i$. Note that $|\cP_{i-1}| = |M_i|$. Since $\cP_{i-1}$ is a $(\cup_{j\in [i-1]}V_j,V_i,M_{abs})$-PC, we have that $|E_{i-1}| = |M_i| +\sum_{j=1}^{i-1}|V_j|$. Therefore, since the edge set $E_{i-1}\setminus M_{abs}$ is $(\cup_{j \in [i-1]}C_j)\setminus C_{abs}$-transversal, we have that
\begin{equation}\label{eq:M1}
|M_i|+\sum_{j=1}^{i-1}|V_j|-|M_{abs}|=\sum_{j=1}^{i-1}|C_j|-|C_{abs}|
\text{ thus, }
|M_i|=\sum_{j=1}^{i-1}(|C_j|-|V_j|)+(|M_{abs}|-|C_{abs}|).
\end{equation}
The above equality and \Cref{eq:basic} imply that if $2\le i\le N$, then
\begin{equation}\label{eq:M3}
    |M_i| = (\beta^3\pm 3\beta^4)|V_i|
    \text{\;\; and \;\; }
    |C_i|-|V_i|+|M_i| = (\beta^4\pm 4\beta^5)|V_i|.
\end{equation}

Now if $2\le i\le N-1$, then given the matching $M_i$ on $V_i$, we can apply \Cref{lem:cover_down} to sample $\cP_i'$, a $(V_i,V_{i+1},M_i,\mathbb{G}_{C_i})$-PC with $(\beta^{-5}/|V_{i+1}|^2)$-spread distribution. For this, note that conditions (i) and (ii) of \Cref{lem:cover_down} are satisfied since $(\cV,\cC,M_{abs},C_{abs})$ belongs to $\cS(\alpha,\alpha',\beta,\delta,\epsilon,N,\mathbb{G})$ and the relations in \eqref{eq:M3} holds (noting that $(\beta^4\pm 4\beta^5)|V_i| \le |V_{i+1}|/8$). Substituting every edge of each path in $\cP_i'$ that lies in $M_i$ with the corresponding path in $\cP_{i-1}$ gives the set $\cP_i$ with the desired properties.

In the last step, i.e. $i=N$, construct the matching $M_N$ from $\cP_{N-1}$ as before. Thus, $|M_N|\le 2\beta^3|V_N|$ by \eqref{eq:M3}. Moreover since $(\cV,\cC,M_{abs},C_{abs})$ belongs to $\cS(\alpha,\alpha',\beta,\delta,\epsilon,N,\mathbb{G})$, we have that $|C_N|\ge \delta^{-1} |V_N|$, $\mathbb{G}_{C_N}[V_N]$ is not $\alpha'$-extremal, and $G_i[V_N]$ has minimum degree at least $(1/2-\epsilon)|V_N|$ for each $i\in C_N$ (see \Cref{def:vortex,def:vortexabsorber}). We let $G_N$ be the graph on $V_N$ such that $e\in E(G_N)$ if and only if $|\{i\in C_N:e\in E(G_i)\}|\ge |V_N|$. Therefore, for every vertex $v\in V_N$, we have
\[
\delta(G_N)|C _N|+(|V_N|-\delta(G_N))|V_N|\ge \sum_{i\in C_N} d_{G_i[V_N]}(v)\ge (1/2-\epsilon)|V_N||C_N|.
\]
As $|V_N|\le \delta |C_N|$, we have $(|V_N|-\delta(G_N))|V_N|\le \delta|V_N||C_N|$, thus the above inequality implies that $\delta(G_N)\ge (1/2-\epsilon-\delta)|V_N|$. Similarly, for every half set $A\subseteq V_N$, we have that  
\begin{align*}
    \min\{ e_{G_N}(A),e_{G_N}(A,\bar{A})\} &\ge \frac{\sum_{i\in C_N} \min\{ e_{G_i}(A),e_{G_i}(A,\bar{A})\} - |V_N| \binom{|V_N|}{2} }{|C_N|} \\&\ge \frac{\alpha'|C_N||V_N|^2 - \delta |C_N| |V_N|^2/2 }{|C_N|} >   \alpha'|V_N|^2/2,
\end{align*}
where the second inequality uses that $\mathbb{G}_{C_N}[V_N]$ is not $\alpha'$-extremal and $|C_N|\ge \delta^{-1} |V_N|$. Thus $G_N$ is not $(\alpha'/2)$-extremal. Thus, we can use \Cref{thm:robustdirac} to conclude that there is a Hamilton cycle $H_N$ of $G_N$ covering $M_N$. Replacing each edge of $M_N\subseteq E(H_N)$ with the corresponding paths in $\cP_{N-1}$ gives a Hamilton cycle $H$ with the property that $E(H)\setminus (M_{abs} \cup (E(H_N)\setminus M_N))$ is $[n]\setminus (C_{abs}\cup C_N)$ transversal. We then greedily color $E(H_N)\setminus M_N$ so that it is $C_N$-rainbow. This is possible as there are initially at least $|V_N|$ choices for a color in $C_N$ for each edge in $G_N$. Note that the number of colors in $C_N$ used in coloring the edges in $E(H_N)\setminus M_N$ is precisely $|V_N| - |M_N| = |C_N| - |M_{abs}| + |C_{abs}| = (1-\beta^3)|V_N|$, where the first equality uses \Cref{eq:simple relation,eq:M1}. Thus, we finally use the unused colors in $C_N$ and the colors in $C_{abs}$ to color the edges in $M_{abs}$  in a rainbow fashion (see \cref{def:absorber,def:vortexabsorber}).

By construction, $E(H)$ is the edge set of a rainbow Hamilton cycle $H$ (here, every element in $E(H)$ denotes a colored edge). We now turn our attention to showing that the above procedure produces an $O(1/n^2)$-spread distribution on the collection of all Hamilton $\mathbb{G}$-transversals.
For that, let $F$ be a set of colored edges on the vertex set $V(\mathbb{G})$. Note that we can assume that $F$ is rainbow and it induces a Hamilton cycle or a disjoint union of paths because otherwise, $F$ cannot be a subset of $E(H)$, which immediately implies that  $\Pr(F\subseteq E(H))=0$. First, we partition $F$ into two sets $F_{abs}$ and $F'$. For each such partition, we will consider the event that $F_{abs}\subseteq M_{abs}$ and $F'\subseteq E(H)\setminus M_{abs}$ which we denote by $\cE(F_{abs},F')$. 

Observe that if $|F_{abs}|> 2L^4$ or $F_{abs}$ is not a matching, then the event $\cE(F_{abs},F')$ has probability zero as $|M_{abs}|\le 2L^4$ and $M_{abs}$ is a matching. 
Furthermore if $F'=\emptyset$, then $\cE(F_{abs},F')$ occurs only if $V(F_{abs})\subseteq V(M_{abs})$ and the colors of the edges of $F_{abs}$ belong to $C_{abs}$. Hence, if $F'=\emptyset$, then $\Pr(\cE(F_{abs},F')) \le 2(L^4/n)^{3|F_{abs}|}\le (1/n^2)^{|F|}$ by \cref{lem:vortex}. To this end, fix a partition of $F$ into $F_{abs}, F'$ with $F'\neq \emptyset$ such that ${|F_{abs}|\le 2L^4}$ and $F_{abs}$ is a matching. Suppose the set $F'$ spans a set of vertex-disjoint paths $P_1,\ldots,P_k$. 
Let $\ell_i$ be the length of $P_i$, $P_i=v_1^iv_2^i,\ldots,v_{\ell_i+1}^i$, and $c(i,j)$ be the color of the edge $v_j^iv_{j+1}^i$ for $i\in [k]$, $j\in [\ell_i]$, and $q(i,j)$ be the index $r\in [N]$ satisfying $v_j^i\in V_r$. 

Every edge $e\in E(H)$ needs to satisfy that both endpoints of $e$ must lie in some $V_i\cup V_{i+1}$ for some $i\in [N-1]$. In addition, for every edge $e\in E(H)\setminus M_{abs}$ that lies in $V_i\cup V_{i+1}$ and intersects $V_i$, we must have that $e$ belongs to some path in $\cP_i$ and $e$ does not belong to any path in $\cP_{i-1}$, hence its color must belong to $C_i$. Thus, in the event $F'\subseteq E(H)\setminus M_{abs}$, we can associate to each path $P_i$ the index $q(i)$, and an $\ell_i$-tuple $t_i\in\{-1,0,1\}^{\ell_i}$ such that the following are satisfied. 
\begin{itemize}
\item[(I)] $q(i)=q(i,1)$ for $i\in [k]$; 
\item[(II)] $t_i(j)=q(i,j)-q(i,j+1)$ for $i\in [k], j\in[\ell_i]$.
\end{itemize}
Thus, (I) and (II) determine $q(i,j)$ for $i\in [k]$, $j\in [\ell_i+1]$, i.e. the index $r$ so that the vertex $v_j^i$ belongs to $V_r$. In addition, they determine the index $r$ such that color $c(i,j)$ belongs to $C_r$, this equals $\min\{q(i,j), q(i,j+1)\}$. We denote the pair $(q(i),t_i)$ associated to $P_i$ by $q(P_i)$ and let $q(F')=\{q(P_i)\}_{i\in [k]}$. 

Suppose $\Pr(F'\subseteq E(H))>0$. Then let $\mathcal{Q}(F')$ be the set of all $\{q(i),t_i\}_{i\in [k]}$ for which (I) and (II) may be satisfied. 
For every $Q=\{q(i),t_i\}_{i\in [k]} \in \mathcal{Q}(F')$ and $l\in [N]$, let 
\[
n_l(Q)=|\{(i,j):q(i,j+1)=l,i\in[k],j\in\{1,\ldots,\ell_i\}|,
\]  
\[
m_l(Q)=|\{(i,j):l=\min\{q(i,j), q(i,j+1)\},i\in[k],j\in[\ell_i]\}|.
\]

For sets $F_{abs}$, $F'$ and $Q\in \mathcal{Q}(F')$, we let $\cE(F_{abs}, F',Q)$ be the event that $\cE(F_{abs}, F')$ occurs and $\mathcal{Q}(F')=Q$.

For completeness, if $\Pr(F'\subseteq E(H))=0$, then let $\mathcal{Q}(F')=\emptyset$. 

Thus, in the event $\cE(F_{abs}, F',Q)$, the quantity $n_l(Q)$ counts the number of vertices in $V_l$ that are not the first vertex of a path and are spanned by $F'$, while $m_l(Q)$ counts the number of edges spanned by $F'$ that receive a color from $C_l$ (equivalently, they are spanned by $\cP_l$ but not $\cP_{l-1}$). Then,
\begin{align}\label{eq:0}
    \Pr(F\subseteq E(H))&\le \sum_{F_{abs}\sqcup F'=F } \; \sum_{Q\in \mathcal{Q}(F')}
    \Pr(F'\subseteq E(H)|\cE(F_{abs},F',Q)) \Pr(\cE(F_{abs}, F',Q))
\end{align}
By \Cref{lem:vortex}, the probability of the event $\cE(F_{abs}, F',Q)$ is at most 
\begin{align}\label{eq:1}
2 \bfrac{L^4}{n}^{3|F_{abs}|+n_N(Q)+m_N(Q)}
\prod_{i\in [k]}\bfrac{2|V_{q(i)}|}{n}    
\prod_{i\in [N]}\bfrac{2|V_i|}{n}^{n_i(Q)}
\prod_{i\in [N]}\bfrac{2|C_i|}{n}^{m_i(Q)} \nonumber
\\ \times \bfrac{n}{2|V_1|}^{2|F_{abs}|}
\bfrac{n}{2|V_N|}^{n_N(Q)}
\bfrac{n}{2|C_N|}^{m_N(Q)} 
\end{align}
The term $(n/2|V_1|)^{2|F_{abs}|}$ is due to potentially double-counting vertices that are spanned by both $F_{abs}$ and $F'$ and thus, these vertices belong to $V_1$. We next simplify the above expression using the following observations. First, since $1/n\ll 1/L$, we have $(L^4/{n})^{3|F_{abs}|}\le (1/n)^{2.5|F_{abs}|}$. Second, since  $|V_1|\ge (1-\sum_{i\ge 1}2\beta^i)n\ge n/2$, we have $(n/2|V_1|)^{2|F_{abs}|}\le 1$. Third, 
\[
\bfrac{L^4}{n}^{n_N(Q)+m_N(Q)} 
\bfrac{n}{2|V_N|}^{n_N(Q)} 
\bfrac{n}{2|C_N|}^{m_N(Q)} \le \bfrac{L^4}{2}^{(n_N(Q)+m_N(Q))} \le \frac{L^{L^2}}{2}.
\] 
At the last inequality we used the fact that due to the choice of $N$ and \Cref{def:vortex}, we have $n_N(Q)\le |V_N|\le (1+\beta/10)\beta^{N-1}n\le 3\beta^{-1}L\le L^2/8$ and $m_N(Q)\le |C_N|\le 2\delta^{-1}|V_N|\le (2\delta^{-1})(3\beta^{-1}L)\le L^2/8$. Combining the above gives,
\begin{align}\label{eq:1.1}
2 \bfrac{L^4}{n}^{3|F_{abs}|+n_N(Q)+m_N(Q)}
\bfrac{n}{|V_1|}^{2|F_{abs}|}
\bfrac{n}{2|V_N|}^{n_N(Q)}
\bfrac{n}{2|C_N|}^{m_N(Q)} \le L^{L^2} n^{-2.5|F_{abs}|}.
\end{align}

Recall that $c(i,j) \in \min\{q(i,j), q(i,j+1)\}$ and $|q(i,j)- q(i,j+1)|\le 1$, thus $c(i,j)\in \{q(i,j+1), q(i,j+1)-1\}$. 
It follows that $\sum_{i=1}^l n_i(Q)\le \sum_{i=1}^l m_i(Q)$ for every $\ell\in [N]$. We also have $\sum_{i=1}^N n_i(Q) = \sum_{i=1}^N m_i(Q)$. Thus, since $|V_i|$ is decreasing with respect to $i$, we have that 
\begin{align}\label{eq:1.2}
    \prod_{i\in [N]} \bfrac{2|V_i|}{n}^{n_i(Q)}
    \prod_{i\in [N]} &\bfrac{2|C_i|}{n}^{m_i(Q)} 
    \le \prod_{i\in [N]}\bfrac{2|V_i|}{n}^{m_i(Q)}
    \prod_{i\in [N]}\bfrac{2|C_i|}{n}^{m_i(Q)} \nonumber
    \\&\le (\delta)^{-m_N(Q)} \prod_{i\in [N]}\bfrac{8\beta^{-1}|V_i|}{n}^{2m_i(Q)} \le L^{L^2} \prod_{i\in [N]}\bfrac{8\beta^{-1}|V_i|}{n}^{2m_i(Q)},
\end{align} 
where at the second inequality we used that $|C_N|\le 2\delta^{-1}|V_N|$ and $|C_i|\le 2|V_i|$ for every $i\in [N-1]$, and at the last inequality we used that $1/L \ll \delta$ and $m_N(Q)\le L^2$ as established right before \eqref{eq:1.1}. Now, \eqref{eq:1.1} and \eqref{eq:1.2} give that \eqref{eq:1} can be bounded above by
\begin{equation}\label{eq:2}
L^{2L^2} n^{-2.5|F_{abs}|} 
\prod_{i\in [k]}\bfrac{2|V_{q(i)}|}{n} \prod_{i\in [N]}\bfrac{8\beta^{-1}|V_i|}{n}^{2m_i(Q)}.
\end{equation}

Thereafter, as for every $i\in [N-1]$, each $\cP'_i$ (which contains all edges in $\cP_i$ that are not in $\cP_{i-1}$) is sampled from an $(\beta^{-5}/|V_{i+1}|^2) \le (\beta^{-8}/|V_i|^2)$-spread distribution, we have
\[
\Pr(F'\subseteq E(H)|\cE(F_{abs},F',Q)) \le \prod_{i\in [N-1]}\bfrac{\beta^{-4}}{|V_i|}^{2m_i(Q)} \le L^{L^2} \prod_{i\in [N]}\bfrac{\beta^{-4}}{|V_i|}^{2m_i(Q)},
\]
where we used $|V_N|\le L^2/8$ and $m_N(Q)\le L^2/8$ as established right before \eqref{eq:1.1}. The above inequality together with \eqref{eq:2} yields the following.
\begin{align}\label{eq:3}
\Pr(F'\subseteq E(H)|\cE(F_{abs},F',Q)) \Pr(\cE(F_{abs},F',Q)) 
&\le L^{3L^2} n^{-2.5|F_{abs}|} \prod_{i\in [k]}\bfrac{2|V_{q(i)}|}{n} \bfrac{8\beta^{-5}}{n}^{2|F'|} \nonumber
\\&\le L^{3L^2} \bfrac{8\beta^{-5}}{n}^{2|F|} \prod_{i\in [k]}\bfrac{2|V_{q(i)}|}{n},
\end{align}

Combining \eqref{eq:0} and \eqref{eq:3} give
\begin{align*} 
\Pr(F\subseteq E(H))&\le L^{3L^2} \bfrac{8\beta^{-5}}{n}^{2|F|} 
\sum_{F_{abs}\sqcup F'=F } \; \sum_{Q\in \mathcal{Q}(F')}
\prod_{i\in [k]}\bfrac{2|V_{q(i)}|}{n}.
\end{align*} 
Observe that for every $i\in [k]$, there are $3^{|F'|}\le 3^{|F|}$ ways to choose the $t_i$'s. In addition, $\sum_{i=1}^N 2|V_i|/n = 2$ and $k\le |F|$. Thus, we have
\begin{align*} 
\Pr(F\subseteq E(H))&\le L^{3L^2} \bfrac{8\beta^{-5}}{n}^{2|F|} \sum_{F_{abs}\cup F'=F} 3^{|F|} \left(\sum_{i\in [N]} \frac{2|V_i|}{n}\right)^k
\\&\le L^{3L^2} \bfrac{8\beta^{-5}}{n}^{2|F|}  2^{|F|} 3^{|F|} 2^{|F|} 
\le \bfrac{c}{n^2}^{|F|},
\end{align*}
where we used $1/c \ll 1/L, \beta$ and $F$ is non-empty. This finishes the proof of \Cref{thm:main_spread_non_extremal}.
\end{proof}

\begin{proof}[{\bf Proof of \Cref{thm:main_distinct_packing} when $\mathbb{G}$ is a non-extremal family}]
Let $0 < 1/n \ll \zeta \ll 1/c \ll \alpha \le 1$. Starting with a non $\alpha$-extremal family $\mathbb{G}= \{G_1,\ldots,G_n\}$ of Dirac graphs on $n$ vertices,  recursively construct a set of $\zeta n^2$ Hamilton $\mathbb{G}$-transversals as follows. Let $j\in [\zeta n^2]$. Given $j-1$ Hamilton $\mathbb{G}$-transversals constructed so far, let $\mathbb{G}^j=\{G_1^j,\ldots,G_n^j\}$ be the graph family where $G_i^j$ is the subgraph of $G_i$ that contains all the edges that are not spanned by the Hamilton $\mathbb{G}$-transversals constructed so far. If every graph in $\mathbb{G}^j$ has minimum degree at least $(1/2-3c\zeta)n$, then sample a Hamilton $\mathbb{G}^j$-transversal according to the $(c/n^2)$-spread measure given by \Cref{thm:main_spread_non_extremal}. Note that a Hamilton $\mathbb{G}^\ell$-transversal sampled according to a $(c/n^2)$-spread measure contains an edge of $G_i$ incident to $v$ with probability at most $c/n$, for every $i\in [n]$, $v\in V(\mathbb{G})$, and $\ell\le j$. Thus, by the union bound, the probability that for a fixed $j\in [\zeta n^2]$, some graph in $\mathbb{G}^{j}$ has minimum degree less than $(1/2-3c\zeta)n\le \delta(\mathbb{G})-2c\zeta n$ is bounded above by $n^2\Pr(Bin (\zeta n^2,c/n)>2c\zeta n)$, which equals $o(n^{-2})$ by the Chernoff bound. Hence the minimum degree condition is satisfied through the $\zeta n^2$ steps whp. Thus, this procedure generates $\zeta n^2$ pairwise edge-disjoint Hamilton cycles whp, as desired.
\end{proof}

\section{Extremal families of Dirac graphs} \label{section:extremal families}

In this section, we prove the following two theorems, which together imply Theorems~\ref{thm:main_spread}, \ref{thm:main_spread_exceptional}, and~\ref{thm:main_distinct_packing} in the extremal cases.

\begin{theorem}\label{thm:extremalsimple}
Let $0 < 1/c, \zeta \ll \alpha \ll 1$. For every non-exceptional but $\alpha$-extremal family $\mathbb{G}=\{G_1,\ldots,G_n\}$ of Dirac graphs on $n$ vertices, there exists $(c/n^2)$-spread probability measure on the set of $\mathbb{G}$-transversal Hamilton cycles. In addition, there exists a collection of $\zeta n^2$ pairwise edge-disjoint Hamilton $\mathbb{G}$-transversals.
\end{theorem}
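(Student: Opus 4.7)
The plan is to exploit the near-extremal structure of $\mathbb{G}$. Let $A$ be a half-set witnessing the $\alpha$-extremality, and set $B := [n] \setminus A$. Combining the Dirac condition with extremality forces each $G_i$ to be close to one of two extremal graphs: the balanced complete bipartite graph $K_{n/2,n/2}$ on $(A,B)$ (\emph{bipartite-type}) or the disjoint union $K_{n/2}\sqcup K_{n/2}$ plus a near-perfect matching across (\emph{clique-type}). Indeed, if $e_{G_i}(A,B)$ is small then the Dirac minimum degree condition forces $e_{G_i}(A)$ and $e_{G_i}(B)$ to both be nearly $\binom{n/2}{2}$, and symmetrically; and averaging in the $\alpha$-extremality inequality bounds the number of ``ambiguous'' graphs by $O(\sqrt{\alpha})n$. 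I would classify the color set as $[n]=C_0\sqcup C_1$ with $C_0$ the bipartite-type colors, assigning the few ambiguous colors to either class and treating them later as forced edges.

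A crucial parity issue then arises: every Hamilton cycle on $[n]$ crosses between $A$ and $B$ an even number of times. In a $\mathbb{G}$-transversal, the natural assignment of each color in $C_0$ to a crossing edge and each color in $C_1$ to a within-edge uses exactly $|C_0|$ crossings, which is admissible precisely when $|C_0|$ is even. When $|C_0|$ is odd, the non-exceptional hypothesis applied to $C=C_0$ yields $e_{C_0}(\mathbb G, A) > 0.1 n^2$ ``wrong-type'' edge--color pairs (either a $C_0$-colored within-edge or a $C_1$-colored crossing), and I would pre-sample one such pair uniformly at random as a mandatory parity-correcting edge before proceeding.

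With the role of each color fixed and any parity-correcting edge chosen, the remaining task is to sample, from an $O(1/n^2)$-spread measure, a Hamilton transversal respecting the role assignment. This decomposes into (i) a rainbow matching in $A\times B$ of the prescribed size using the ``crossing'' colors, and (ii) two rainbow path covers of $A$ and $B$ using the ``within'' colors, then gluing at matching endpoints. I would adapt the iterative-absorption machinery of \Cref{section:vortex,subsec:puttingeverythingtogether} to a \emph{bipartite vortex}: partition $A$ and $B$ separately into nested sequences of vertex sets, split $C_0,C_1$ along the levels in matching proportions, and repeat the cover-down step in bipartite/clique analogs of \Cref{lem:matching_square,lem:cover_down}; the near-complete bipartite (resp.\ near-clique) structure of each restricted $G_i$ makes these analogs follow by essentially the same Hall/Chernoff analysis. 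The innermost level is closed by a bipartite variant of \Cref{thm:robustdirac}, and the $O(1/n^2)$-spread bound is extracted by the same edge-tracing calculation as in \Cref{subsec:puttingeverythingtogether}. For the packing statement, I would iterate the spread sampling and use a Chernoff argument to show that after $\zeta n^2$ iterations every $G_i$ has lost only $o(n)$ edges at every vertex, so the residual family remains Dirac and $(\alpha/2)$-extremal non-exceptional with the same witness $A$, and the procedure can be restarted.

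The main technical obstacle is the spread bookkeeping in the odd-$|C_0|$ case. The parity-correcting edge is a rare global choice, while the rest of the cycle is sampled by a highly decentralised procedure, and one must couple these two stages so that any adversarially chosen subset $F$ of the output Hamilton cycle still receives marginal probability $(c/n^2)^{|F|}$. A secondary difficulty is handling the ambiguous graphs uniformly with the parity-correcting edge in an initial ``forced-edge'' preprocessing phase, so that the bipartite vortex argument proceeds on a cleanly classified family without losing a constant factor in the spread bound.
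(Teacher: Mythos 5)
Your structural observations are mostly aligned with the paper's: classifying colors/graphs as bipartite-type vs.\ clique-type with respect to an extremal half-set $A$, noting that every Hamilton cycle crosses $A\times\overline{A}$ an even number of times, and invoking the non-exceptional hypothesis to pre-sample a parity-correcting ``wrong-type'' colored edge when $|C_0|$ is odd -- all of these appear in the paper's clean-up lemmas (\Cref{lem:cleanupbipartite,lem:cleanupcliques}, Steps~0--1). But the route you propose for the core spread construction diverges from the paper and contains two real gaps.

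First, you propose running a bipartite vortex/iterative-absorption argument. The paper deliberately does \emph{not} do this in the extremal case: the vortex machinery of \Cref{section:vortex} is set up only for non-$\alpha$-extremal families (\Cref{def:vortexabsorber}) and its key ingredient \Cref{lem:property preserve} is proved using non-extremality; there is no off-the-shelf analog for a dense near-bipartite (or near two-clique) family, and saying that the ``bipartite/clique analogs follow by essentially the same Hall/Chernoff analysis'' papers over a substantial missing lemma. The paper instead reduces, via the clean-up, to \Cref{lem:bipartite}: a self-contained, non-absorptive construction of an $O(1/n^2)$-spread measure on $a$-$b$ Hamilton transversal paths in near-complete bipartite families -- built from a greedily sampled ``flexible'' path $P_1$, a rainbow path $P_2$ hitting the rare colors, a P\'osa-rotation finish (\Cref{claim:ham_spread}), and a rainbow coloring of $E(P_1)\cup E(P_3)$ via a perfect matching in an auxiliary color--edge bipartite graph. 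This direct argument avoids the property-preservation problem entirely.

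Second, your decomposition (i)+(ii) -- a rainbow crossing matching plus rainbow path covers of $A$ and $B$ -- does not by itself yield a single Hamilton cycle; without a connectivity constraint on how the matching joins path endpoints, you get a $2$-factor that may have many components. The paper enforces this by designing \Cref{lem:bipartite} to produce a Hamilton \emph{path with prescribed endpoints}, so that in Case~II two invocations on $A$ and on $B$ can be concatenated with the crossing piece into one cycle, and in Case~I a single invocation plus the short clean-up path closes up. Your plan would need to build this endpoint-prescription into the vortex, which is nontrivial. For the packing claim, your Chernoff restart idea is plausible, but note the paper proves it differently and more robustly: it first places $\zeta n^2$ pairwise edge-disjoint clean-up paths, then for each one greedily extends to a Hamilton transversal avoiding all previously used edges, so that the argument is deterministic and does not rely on the spread measure remaining well-behaved after many rounds.
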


\begin{theorem}\label{thm:superextremal}
Let $0< 1/c, \zeta \ll 1$. Let $\mathbb{G}=\{G_1,\ldots,G_n\}$ be a collection of Dirac graphs on $n$ vertices that is exceptional, and let $A=A(\mathbb{G})$ and $C=C(\mathbb{G})$. Let $(e,i)$ be such that $e\in E(G_i)$ and either $i\in C$ and $e\notin A\times \overline{A}$ or $i\in \overline{C}$ and $e\in A\times \overline{A}$. Also let $\mathbb{G}^{-i}(A,C)=\{G_1',\ldots, G_{i-1}',G_{i+1}',\ldots,G_n'\}$ where $G_j'$ is the bipartite subgraph of $G_j$ with bipartition $A\times \overline{A}$ (also denoted as $G_j[A\times \overline{A}]$) if $j\in C$ and $G_j'=G_j-G_j[A\times \overline{A}]$ otherwise. Then there exists a $(c/n^2)$-spread probability measure on the set of $\mathbb{G}^{-i}$-transversal $(E,\phi)$ where $E$ spans a Hamilton path on $V$ that joins the endpoints of $e$. In addition, there exist a collection of $\min\{\zeta n^2, r(\mathbb{G})\}$ such pairs $(e,i)$ and $\mathbb{G}^{-i}$-transversal Hamilton paths that are pairwise edge-disjoint.
\end{theorem}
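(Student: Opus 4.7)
The plan is to adapt the iterative absorption framework of \Cref{subsec:puttingeverythingtogether} to the rigid bipartite-like structure forced by $\mathbb{G}$ being exceptional. Since $\mathbb{G}$ is exceptional with witness $(A,C)$ and each $G_j$ is Dirac with $|A|=|\overline{A}|=n/2$, for each $j\in C\setminus\{i\}$ the graph $G_j'=G_j[A,\overline{A}]$ is close to the complete bipartite graph on $(A,\overline{A})$, while for each $j\in\overline{C}$ the graph $G_j'=G_j-G_j[A,\overline{A}]$ is close to $K_A\sqcup K_{\overline{A}}$. I would first run a short preprocessing step that absorbs into the nascent path the small number of vertices whose degree into the expected part is deficient; after this cleanup, every $G_j'$ has minimum degree at least $(1/2-\eta)(n/2)$ either across the bipartition (for $j\in C\setminus\{i\}$) or within each side (for $j\in\overline{C}$).

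Next I would carry out a short parity/counting step. Any rainbow Hamilton path from one endpoint of $e$ to the other must use the $|C\setminus\{i\}|$-many $C$-colored edges exactly on the $A$--$\overline{A}$ crossings; counting maximal runs inside $A$ and inside $\overline{A}$ then uniquely determines (up to at most one) the numbers of $\overline{C}\setminus\{i\}$-colors that must be realised inside $A$ versus inside $\overline{A}$, and this allocation is consistent because $|C|$ is odd and $|A|=|\overline{A}|$. I would therefore first sample a random balanced partition $\overline{C}\setminus\{i\}=C_A\sqcup C_{\overline{A}}$ of the correct sizes and then look for a rainbow Hamilton path whose $C$-colors lie exactly on the crossings, whose $C_A$-colors lie inside $A$, and whose $C_{\overline{A}}$-colors lie inside $\overline{A}$.

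With this structure fixed I would rerun the vortex-absorber scheme. The vortex $V_1\supset\cdots\supset V_N$ is sampled so that $|V_l\cap A|=|V_l\cap\overline{A}|\pm 1$ at every level, and the color vortex $C_1,\ldots,C_N$ respects the partition $C\sqcup C_A\sqcup C_{\overline{A}}$ in the correct proportions. The absorber gadgets of \Cref{lem:vortex} go through verbatim because both $K_{A,\overline{A}}$ and $K_A\sqcup K_{\overline{A}}$ have an abundance of common neighbours to support the color-swapping length-two paths. The cover-down step is the bipartite analogue of \Cref{lem:cover_down}: at each level I produce spread matchings across the bipartition for $C$-colors and spread matchings within each side for $C_A$- and $C_{\overline{A}}$-colors, invoking \Cref{lem:matching_square} (which is already bipartite) in all cases. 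At the constant-sized innermost vortex $V_N$ the role of \Cref{thm:robustdirac} is played by its bipartite analogue, which I would prove by the same rotation-extension argument as in \Cref{subsec:robustDirac}, specialised so that the rotations preserve the side of the fixed endpoint.

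The main obstacle I expect is precisely this bipartite base step: one must simultaneously close up the path between the prescribed endpoints of $e$, span the prescribed residual matching inside $V_N$, and respect the prescribed crossing/non-crossing colouring, all inside a constant-sized non-extremal bipartite family. Once this is settled, the spread computation from \Cref{subsec:puttingeverythingtogether} transfers essentially verbatim and gives the claimed $c/n^2$-spread probability measure. The packing statement then follows as in the non-extremal case: iteratively sample pairs $(e,i)$ from the witnessing edges of $r(\mathbb{G})$ together with a Hamilton path from the spread measure on the residual family, and observe via Chernoff that over the first $\min\{\zeta n^2, r(\mathbb{G})\}$ rounds each fixed edge is reused in expectation $O(\zeta)$ times so the minimum degree drops by at most $O(\zeta n)$, the residual family remains exceptional with the same $(A,C)$, and enough witnessing pairs remain to continue.
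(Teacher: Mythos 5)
Your approach is genuinely different from the paper's, and it has a gap that you partially anticipate but do not resolve.

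The paper does not rerun the iterative absorption machinery in a bipartite setting. Instead, it first shows that being exceptional forces most colors in $C$ to index $(1-\epsilon)$-bipartite graphs (in fact all but $O(1/\epsilon)$ of them) and forces $|C|\ge 0.8n$. It then constructs a single \emph{cleanup} rainbow path $P$, following Steps~3--5 of the Case~I argument in the proof of \Cref{thm:extremalsimple}, that (i) uses every color outside the ``good'' set $C'$ (this includes all of $\overline{C}$ and the $O(1/\epsilon)$ exceptional colors in $C\setminus C'$), (ii) covers all low-degree vertices, and (iii) has the right length parity. What remains after deleting the internal vertices of $P$ is a genuinely bipartite configuration with only almost-complete-bipartite colors, and then \Cref{lem:bipartite} --- a self-contained result proved directly via a three-path decomposition and rotation-extension, not via vortices --- closes the construction in an $O(1/n^2)$-spread manner. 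The packing part is then a greedy recursion over witnessing pairs $(e,i)$ exactly as in the non-extremal case.

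The concrete gap in your proposal is the claim that after absorbing ``the small number of vertices whose degree into the expected part is deficient'' one can assume $G_j'$ has minimum degree at least $(1/2-\eta)(n/2)$ within each side for $j\in\overline{C}$ (and across the bipartition for $j\in C$). This is not true pointwise. The exceptional hypothesis only gives the aggregate bound $\sum_{j\in\overline{C}}e_{G_j}(A,\overline{A})\le 0.1n^2$ (and $\sum_{j\in C}\bigl(e_{G_j}(A)+e_{G_j}(\overline{A})\bigr)\le 0.1n^2$). A \emph{single} color $j\in\overline{C}$ can account for nearly the entire budget, e.g.\ $e_{G_j}(A,\overline{A})\approx 0.1n^2$, in which case as many as $\Theta(n)$ vertices of $A$ may have essentially all of their $G_j$-degree going across the bipartition, so $G_j'[A]$ has $\Theta(n)$ vertices of degree close to zero. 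These cannot be absorbed in a ``short'' preprocessing step. The same failure occurs for a color $j\in C$ that is far from bipartite. Your plan of rescuing this by putting such colors onto a few absorbed edges would, carried out honestly, reproduce the paper's cleanup-path construction; at that point there is no reason to split $\overline{C}$ into $C_A\sqcup C_{\overline{A}}$ or to invoke iterative absorption at all, since every remaining color indexes a graph that is almost complete bipartite on $(A,\overline{A})$ and \Cref{lem:bipartite} applies directly.

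In addition, you flag the bipartite base step (a bipartite analogue of \Cref{thm:robustdirac} with prescribed endpoints, a prescribed residual matching, and a prescribed crossing/non-crossing coloring) as the main obstacle and do not supply it. The paper never needs such a lemma: the innermost step of its construction is the auxiliary-bipartite-graph perfect-matching step inside the proof of \Cref{lem:bipartite}, which reuses \Cref{claim:ham_spread}. Finally, your parity/counting allocation of $\overline{C}\setminus\{i\}$ into $C_A\sqcup C_{\overline{A}}$, while arithmetically consistent, is an unnecessary layer: it forces each $\overline{C}$-color to appear on a specific side of the bipartition, which can be infeasible because a given $G_j$ with $j\in\overline{C}$ need not have any usable edges inside the side you assign it to. The paper avoids this commitment entirely by consuming all such colors in the cleanup path before the bipartite phase begins.
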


Note that \Cref{thm:extremalsimple}, together with \Cref{thm:main_spread_non_extremal}, finishes the proof of \Cref{thm:main_spread}. Moreover, \Cref{thm:superextremal} obviously implies \Cref{thm:main_spread_exceptional}. Additionally, \Cref{thm:extremalsimple,thm:superextremal} imply \Cref{thm:main_distinct_packing} when $\mathbb{G}$ is an extremal family. 

After a ``clean up" procedure, we reduce both the above results to finding an $O(1/n^2)$-spread measure on the set of Hamilton paths with fixed endpoints that are $\mathbb{G}'$-transversal, where $\mathbb{G}'$ is a family of almost complete bipartite graphs that share the same bipartition. Then, we make use of the lemma below. For a given graph $G$ and $u,v\in V(G)$, a Hamilton path of $G$ with the two endpoints $u,v$ is referred to as an $u$-$v$ Hamilton path. 

\begin{lemma}\label{lem:bipartite}
Let $0\le \epsilon \ll 1$. Let $\mathbb{G}$ be a graph family of $2n-1$ bipartite graphs indexed by $C=[2n-1]$ on the vertex set $A\cup B$ with $|A|=|B|=n$. Assume that $\mathbb{G},A,B,C$ satisfy the following conditions.
\begin{enumerate}
\item $e_{G_i}(A,B)\ge (1 - \epsilon)|A||B|$ for every $i\in C$,
\item \label{eq:min deg of A} $\sum_{i\in C} e_{G_i}(v,B)\ge (1 - \epsilon)|B||C|$ for every $v\in A$, and 
\item \label{eq:min deg of B} $\sum_{i\in C} e_{G_i}(v,A)\ge (1 - \epsilon)|A||C|$ for every $v\in B$.
\end{enumerate}
Then, for every $a\in A$ and $b\in B$, there exists an $O(1/n^2)$-spread measure on the set of $a$-$b$ Hamilton paths that are $\mathbb{G}$-transversal. 
\end{lemma}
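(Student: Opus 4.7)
The plan is to adapt the iterative absorption / vortex scheme used in the non-extremal case (\Cref{lem:vortex}, \Cref{lem:cover_down}, and the proof of \Cref{thm:main_spread_non_extremal}) to the bipartite setting, using the near-completeness of each $G_i$ to simplify several steps. The final output will be a distribution on $\mathbb{G}$-transversal $a$-$b$ Hamilton paths assembled level by level through a bipartite vortex.

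First, I would construct a bipartite vortex $(\mathcal{A},\mathcal{B},\mathcal{C},M_{\mathrm{abs}},C_{\mathrm{abs}})$ with $\mathcal{A}=(A_1,\ldots,A_N)$, $\mathcal{B}=(B_1,\ldots,B_N)$, $\mathcal{C}=(C_1,\ldots,C_N)$ by random subsampling: the $A_i$'s and $B_i$'s shrink geometrically, $|A_i|=|B_i|$ at every level, $a\in A_N$ and $b\in B_N$ are planted in the innermost level, and the cardinalities $|C_i|$ are calibrated to leave a small but positive color surplus at each interior level. An analogue of \Cref{lem:property preserve} is needed to check that after subsampling both sides of the bipartition and the color set, conditions $(1)$--$(3)$ of the lemma are inherited (with slightly worse constants) to every $\mathbb{G}_{C_i}[A_i\cup B_i]$. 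The absorber $(M_{\mathrm{abs}},C_{\mathrm{abs}})$ is placed in $A_1\cup B_1$; its construction is substantially easier than in \Cref{lem:vortex} because $e_{G_i}(A,B)\geq(1-\epsilon)|A||B|$ means that for any two chosen edges there are $\Omega(|C|)$ colors each sharing $\Omega(n^2)$ edges with both $G_i$'s, giving ample room for the color-swap gadgets.

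Second, I would prove a bipartite cover-down lemma in the spirit of \Cref{lem:cover_down}: given a bipartite matching $M$ on $A_i\cup B_i$, a $(A_i\cup B_i,A_{i+1}\cup B_{i+1},M,\mathbb{G}_{C_i})$-PC can be sampled from an $O(1/|A_i|^2)$-spread distribution. The sampling procedure mirrors \Cref{lem:cover_down}: randomly sub-partition each of $A_i,B_i$ into $O(1)$ blocks, iteratively extract rainbow bipartite matchings across consecutive blocks using a bipartite analogue of \Cref{lem:matching_square} (which goes through verbatim since the input graphs in its statement are already bipartite), then attach the remaining endpoints to $A_{i+1}\cup B_{i+1}$ via random rainbow edges, whose existence is guaranteed by the sum-degree hypotheses $(2)$ and $(3)$. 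At the innermost level $N$, since $|C_N|$ exceeds $|A_N|+|B_N|$ by a constant factor and each $G_j[A_N\cup B_N]$ is nearly complete bipartite, I can deterministically complete the matching $M_N$ into an $a$-$b$ Hamilton path using a bipartite robust-Dirac statement, which follows from Hall's / König-type arguments applied to the auxiliary graph whose edges are covered by at least $|A_N|$ colors in $C_N$. Finally, the absorber consumes the remaining colors exactly as in \cref{def:absorber}.

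The spread computation will then be a direct telescoping analogue of the one in \Cref{thm:main_spread_non_extremal}: for any target rainbow edge set $F$, split it into the part absorbed by $M_{\mathrm{abs}}$ and the part distributed across the vortex layers, sum over the possible level assignments and lengths using the tuples $\{q(i),t_i,x_i\}$, and use the per-layer $O(1/|A_i|^2)$-spread bound together with $\sum_i |A_i|/n=O(1)$. The main obstacle I expect is bookkeeping bipartite parity: a bipartite path covering $M$ with one endpoint in $A_{i+1}$ and the other in $B_{i+1}$ has odd length while one with both endpoints on the same side has even length, so the choice of $|C_i|$ and the structure of the induced matching $M_{i+1}$ at the next layer must be compatible with the bipartition. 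Carrying this parity bookkeeping consistently through the vortex, and verifying that it does not degrade the spread estimate, is where the bulk of the technical care has to go; everything else is a direct bipartite translation of the non-extremal proof.
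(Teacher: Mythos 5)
Your plan takes a genuinely different route from the paper. The paper does not re-run the iterative-absorption / vortex machinery at all; it exploits the $(1-\epsilon)$-near-completeness of the $G_i$ to give a short, direct three-path construction. Define $G$ to be the graph of edges belonging to at least $1.99n$ of the $G_i$; conditions (1)--(3) force $\delta(G)\ge 0.9n$. The paper first greedily grows a long path $P_1$ from $a$ using random eligible edges of $G$ (uncolored, $O(1/n)$-spread), collects a set $C_1$ of $1.95n$ colors that each appear on at least $1.6n$ edges of $P_1$, then builds a short rainbow path $P_2$ that consumes every color outside $C_1$ plus a few endpoints-adjusting colors, then closes off with an uncolored $w$-$b$ Hamilton path $P_3$ on the remaining roughly $0.25n$ vertices via a simple dense-bipartite spread-Hamilton-path claim. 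Finally the edges of $E(P_1)\cup E(P_3)$ are colored with $C_1$ by finding a perfect matching, in spread fashion, in the auxiliary bipartite graph (edges of $E(P_1)\cup E(P_3)$) $\times$ $C_1$, which has minimum degree $\ge 1.6n$. The whole argument has no absorber, no vortex, no analogue of \Cref{lem:property preserve}, and no cover-down lemma: near-completeness is strong enough that ``place edges first, color later'' works with a single auxiliary matching step.

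By contrast, your proposal is essentially re-deriving a bipartite version of \Cref{lem:vortex}, \Cref{lem:cover_down}, and \Cref{thm:robustdirac}, then plugging them into a telescoping spread computation. In principle this could be made to work, but it is dramatically heavier than the situation warrants, and you have not closed the one genuinely delicate point, which you yourself flag: bipartite parity of the PC. For the next-level induced matching $M_{i+1}$ to live in $A_{i+1}\times B_{i+1}$, every path in the cover-down output must have exactly one endpoint on each side of the bipartition, i.e.\ have odd length. The non-bipartite cover-down procedure (random blocks $U_1,\ldots,U_k$, cross-block matchings, then random attachment of leftover endpoints) produces paths of both parities, and the places where paths of even length arise are exactly where the block structure and $M_i$ interact. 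You cannot simply ``carry the bookkeeping''; one must redesign the block decomposition so that the two sides of the bipartition alternate and each resulting path spans an odd number of cross-block matching edges, and also re-balance $|A_{i+1}|$ against $|B_{i+1}|$ when some vertices of $V_{i+1}$ get matched twice (paths of length 0 become length 2 after attachment, changing which side the endpoints live on). Until that is nailed down, the proposal has a real gap. Given that the paper's route avoids all of this by leveraging $\epsilon\le 0.01$ to produce a common core graph of degree $\ge 0.9n$, I would recommend switching to the direct construction rather than repairing the bipartite vortex.

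One smaller technical caution if you do persist with the vortex route: planting $a$ and $b$ in $V_N$ changes the distribution under which $V_N$ is sampled, and your spread estimate in \Cref{lem:vortex}-style form conditions on $a,b\in V_N$; you would need to verify that conditioning on two fixed vertices landing in the final layer only degrades the $\big(L^4/n\big)^{|U_N|}$-type factors by an absolute constant, which is true but must be stated and used.
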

We prove \cref{lem:bipartite} by constructing a path $P_1$, a matching $M$, and a linear forest $P_2$ whose union forms a Hamilton path with endpoints $a,b$. $P_1$ will have the property that every edge of $P_1$ sees many colors, i.e. it belongs to many graphs in $\mathbb{G}$. Then we let $C_1$ be the colors that appear on many edges of $P_1$. $M$ will be rainbow and use all the colors in $[2n-1]\setminus C_1$. $P_2$ will cover the rest of the vertices and every edge of $P_2$ will see many colors in $C_1$. At the end, we will match and color the edges in $E(P_1)\cup E(P_2)$ with the colors in $C_1$.

We will first use the following lemma to prove \Cref{lem:bipartite} and then later give a proof of it.
\begin{lemma}\label{claim:ham_spread}
Let $0\le \epsilon \le 0.1$. Let $G$ be a bipartite graph on $A\cup B$ with $|A|=|B|=n$, and minimum degree at least $(1-\epsilon)n$. Let $M$ be a matching on $V(G)$ of size at most $\epsilon n$. Then, there exists an $O(1/n)$-spread measure on the set of Hamilton cycles of $G\cup M$ that spans every edge in $M$.
\end{lemma}

\begin{proof}[{\bf Proof of \Cref{lem:bipartite}}]
Without loss of generality, we can assume that $\epsilon> 0$ and $n$ is sufficiently large relative to $\epsilon$. Let $k$ be such that $0 < 1/n \ll 1/k \ll \epsilon \ll 1$, and let $\delta = \epsilon^{1/2}$. 
Let $G$ be the graph on $A\cup B$ with edge set $E(G)$, where $e\in E(G)$ if and only if $e$ belongs to at least $(2-\delta)n$ graphs of $\mathbb{G}$. By \eqref{eq:min deg of A} and \eqref{eq:min deg of B}, the minimum degree of $G$ is at least $(1-\delta)n$.
We first apply \Cref{claim:ham_spread} with $\epsilon = \delta$ and $M=\{ab\}$ on the graph $G$ to give an $(k/n)$-spread measure on the set of paths $P_1$ such that $a,b\notin V(P_1)$ and $|V(P_1)| = 1.9n$. This easily follows by defining $P_1$ to be a subpath of the path obtained from applying \Cref{claim:ham_spread} that does not contain the vertices $a,b$ and has size $1.9n-1$. We will next condition on such a path $P_1$ and then proceed with the plan as sketched before.

Since our arguments still work with any integer in $1.9n \pm 1$ instead of $1.9n$, we will assume that $1.9n$ is an even number from now on. In particular, $|V(P_1)\cap A| = |V(P_1)\cap B| = 0.95n$. Note that the edges in $E(P_1)$ belong to $G$ and have not received a color yet. Let $C_1$ be the set of all colors $c$ such that $|E(P_1)\cap E(G_c)|\ge 1.8n$. Then, $|C_1|\ge (2-20\delta)n$. Remove some colors from $C_1$ so that it has size exactly $(2-20\delta)n$. Let $u,v$ be the endpoints of the path $P_1$.

Let $V= V(G)\setminus (\{a,b\}\cup V(P_1))$. Note that $|V|= 0.1n -2$. We now construct a rainbow matching $M$ on $V$ using all colors in $C\setminus C_1$ with a $(k/n^2)$-spread measure as follows. By using (1), we have $e(G_i[V])\ge 0.49 |V|^2$ for every $i\in C$. We now form the matching $M$ in a random greedy way. Give an arbitrary ordering to the colors in $C\setminus C_1$ and then sequentially, for every $i\in C\setminus C_1$, let $e_i$ be a uniformly random edge that can be added to the matching $M$ formed so far (i.e., add an edge not incident to any of the edges already in $M$). Notice that for every color, we will have a choice of at least $0.49 |V|^2 - 20\delta n \cdot |V|\ge 0.1^3n^2$ edges. Thus, $M$ is $(k/n^2)$-spread over the colored edges of $\mathbb{G}$. We will next condition on such a matching $M$.

Let $V'= V(G)\setminus (V(P_1)\setminus \{u,v\})$ and $G'=G[V']$. Then $G'$ is bipartite on $0.05n + 1$ vertices on both sides and has minimum degree larger than $|V'|/2 - \delta n \ge 0.49|V'|$ (where we use that the minimum degree of $G$ is at least $(1-\delta)n$). We can use \Cref{claim:ham_spread} with $M = \{ab\}\cup \{uv\}\cup M$ to sample a linear forest $P_2$ from an $(k/n)$-spread probability measure such that $P_2\cup \{ab\}\cup \{uv\}\cup M$ forms a Hamilton cycle of $G'$. 

Now let $P=P_1\cup M \cup P_2$. Note that $P$ is a Hamilton path with endpoints $a$ and $b$. It remains to assign the colors in $C_1$ to the edges in $E(P_1)\cup E(P_2)$ in a $(k/n)$-spread manner. For that, consider the auxiliary balanced bipartite graph $H$ with vertex set $C_1\times (E(P_1)\cup E(P_2))$. An edge $(e,c)$ belongs to $H$ if $e\in E(G_c)$. Each vertex of $H$ in $E(P_1)\cup E(P_2)$ corresponds to an edge of $G$, hence it has degree at least $(2-\delta)n - |C\setminus C_1|\ge 1.8n\ge (1-0.1)|C_1|$ in $H$. Thereafter, each vertex in $C_1$ has degree at least $1.8n-1\ge (1-0.1)|C_1|$. Observe that there exists a natural bijection between perfect matchings of $H$ and $C_1$-rainbow colorings of $E(P_1)\cup E(P_2)$. To construct a perfect matching $M$ of $H$ in a $(k/n)$-spread manner, apply \cref{claim:ham_spread} to construct a Hamilton path $P$ in $H$ in a $(k/n)$-spread manner and then take $M$ to be the maximum matching of $P$ (i.e the one that spans every other edge of $P$ and is incident to the endpoints of $P$). This completes the construction of the $a$-$b$ Hamilton path that is $\mathbb{G}$-transversal. 

We next check that this procedure gives an $O(1/n^2)$-spread measure on the set of such paths. Consider a set $C'\subseteq C$ and edges $e_i = a_ib_i \in E(G_i)$ for every $i\in C'$ and let $E' = \{e_i : i\in C'\}$. We are interested in the probability that $E'\subseteq E(P)$. Fix a partition of the set $E'$ into three sets $E_1, E_2, E_3$. Then, the probability that edges in $E_1$ without the corresponding colors all lie in $P_1$ is at most $(k/n)^{|E_1|}$. Then, conditioning on this, the probability that all edges in $E_2$ with the colors lie in the rainbow matching $M$ is at most $(k/n^2)^{|E_2|}$. Conditioned on these, the probability that all uncolored edges in $E_3$ lie in $P_2$ is at most $(k/n)^{|E_3|}$. Finally, conditioning on all these, while giving the random coloring of $P_1\cup P_2$, the probability that the uncolored edges of $E_1\cup E_3$ get the corresponding color is at most $(k/n)^{|E_1| + |E_3|}$. Thus, we have 
\[
\Pr(E_1\subseteq E(P_1) \bigwedge E_2\subseteq E(M) \bigwedge E_3\subseteq E(P_2)) \le (k/n)^{2(|E_1| + |E_2| + |E_3|)} = (k/n^2)^{|E_1| + |E_2| + |E_3|}.
\]
Thus, by a union bound over all partitions of $E'$ into three sets, we have $\Pr(E'\subseteq P) = 3^{|E'|} \cdot (k/n^2)^{|E'|}$. This completes the proof of \Cref{lem:bipartite}.
\end{proof}

\Cref{claim:ham_spread} can be proved in a number of ways (including a similar approach to the proof of the non-extremal case of \Cref{thm:main_spread}). We give a proof using a more classical approach.

\begin{proof}[{\bf Proof of \Cref{claim:ham_spread}}]
Let $G_0$ be a random subgraph of $G$ whose edge set $E(G_0)$ is $\cup_{v\in V(G)}E_v$ where for every vertex $v\in G$, the set $E_v$ is formed by including $100$ edges from $\{vw \in E(G): w\notin V(M)\}$. We first aim to show that the random graph $G' = G_0\cup G_{1000/n}$ (which is the union of $G_0$ and an independently generated $G_{1000/n}$) has the property that $G'\cup M$ whp contains a Hamilton cycle that spans all edges in $M$. 

We first show that the minimum degree condition on $G$ implies good expansion properties of $G_0$. For a graph $H$ and a matching $M$ on $V(H)$, we call $H$ an \textit{$M$-respecting $2$-expander} if $H$ is connected and for every $U\subseteq V(H)$ with $|U|\le |V(H)|/8$, we have $|N_{H}(U)\setminus V(M)|\ge 2|U|$. Let us first show that 
\begin{equation}\label{eq: expansion}
\text{whp for every $U\subseteq A$ or $U\subseteq B$ with $|U|\le n/2$, we have $|N_{G_0}(U)|> \min\{4|U|, n/2\}$.} 
\end{equation}
Indeed, by a union bound, this does not hold with probability at most 
\begin{align*}
\sum_{k\in [n/8]} 2 \binom{n}{k} \binom{n}{4k}\left(\frac{\binom{4k}{100}}{\binom{(1-2\epsilon)n}{100}}\right)^k + \sum_{k\in [n/8, n/2]} 2 \binom{n}{k} \binom{n}{n/2}\left(\frac{\binom{n/2}{100}}{\binom{(1-2\epsilon)n}{100}}\right)^k = o(1).
\end{align*}
Now, \eqref{eq: expansion} easily implies that whp for every $U\subseteq V(G_0)$ with $|U|\le n/4$, we have $|N_{G_0}(U)\setminus V(M)|\ge 2|U|$.
We next argue that whp $G_0$ is connected. Indeed, if $G_0$ is not connected, then there must exist a connected component of $G_0$ of size at most $n$, and thus there must be a non-empty set $U\subseteq A$ or $U\subseteq B$ with $|U|\le n/2$ such that $|N_{G_0}(U)|\le |U|$, which then contradicts \eqref{eq: expansion}. This completes the proof that $G_0$ is whp an $M$-respecting $2$-expander.

A similar argument shows that whp $G_0\cup M$ contains a $2$-factor containing all edges of $M$. For this, one can, for example, decompose the graph $G_0$ into two edge-disjoint random graphs $G'_0$ and $G''_0$ where each of these graphs is formed by including $50$ edges from $\{vw \in E(G): w\notin V(M)\}$. Then, one can argue using Hall's theorem and a property analogous to \eqref{eq: expansion} that each of $G'_0$, $G'0[V(G)\setminus V(M)]$  contains a perfect matching whp. By taking the union of these two perfect matchings and $M$, we can obtain a desired $2$-factor in $G_0\cup M$ spanning all edges in $M$.

Next, we will use a bipartite variant of the classical P\'osa rotation-extension argument (\cite{posa1976hamiltonian}). The rotation-extension technique was extended to bipartite graphs by Frieze~\cite{frieze1985limit}, which was later simplified by Bollob\'as and Kohayakawa~\cite{bollobas1991hitting}. We follow this later approach. We use the same terminology as in the proof of \Cref{thm:robustdirac}. In particular, a spanning subgraph $H$ of $G\cup M$ is called \textit{good} if for all $uv\in M$, we have $uv\in E(H)$. 

For a graph $H$, we define $\ell(H)$ to be the size of a largest component of a good $2$-factor of $H$. For a graph $H$, we call an edge $e\in \binom{V(H)}{2}$ a \textit{good booster} if $\ell(H+e) > \ell(H)$ or the graph $H+e$ contains a good Hamilton cycle. Now, the following two claims finish the proof of the fact that the random graph $G_0\cup G_{1000/n}\cup M$ contains a good Hamilton cycle whp.

\begin{claim}
Let $G_0$ be an $n\times n$ bipartite graph containing a good $2$-factor with respect to a matching $M$ on $V(G_0)$. If $G_0$ is an $M$-respecting $2$-expander, then $G_0$ has at least $n^2/32$ good boosters.
\end{claim}
\begin{proof}
We first consider a good 2-factor $F$ in $G_0$ such that $\ell(G_0)$ equals the size of a largest component of $F$. Let $C$ denote such a largest component of size $\ell(G_0)$. We next show that there is a good spanning subgraph $F'$ of $G_0$ such that 
\begin{enumerate}
\item $F'$ is a union of a path $P$ and a $2$-factor of $G_0\setminus V(P)$.
\item $V(P)\supsetneq V(C)$. 
\end{enumerate} 
Since $G_0$ is a connected graph, there exists an edge $v_1v_2\in E(G_0)$ such that $v_1\in V(C)$ and $v_2\in V(G_0)\setminus V(C)$. Let $C'$ denote the component of $F$ containing $v_2$. For $i\in [2]$, let $x_i$ and $y_i$ denote the neighbors of $v_i$ in $F$. Note that since $M$ is a matching, at most one edge from $\{v_ix_i, v_iy_i\}$ belong to $M$ for each $i\in [2]$. Without loss of generality, let $v_ix_i \notin E(M)$ for each $i\in [2]$. Thus, by deleting the edges $v_1x_1$ and $v_2x_2$ from $C$ and $C'$, and adding the edge $v_1v_2$, we get a good path $P$ on $V(C)\cup V(C')$. We can now consider $F'$ to be the union of $P$ and the $2$-factor of $G_0\setminus V(P)$ obtained from deleting the components $C$ and $C'$ from $F$.

We now fix a good spanning subgraph $F'$ of $G_0$ satisfying (1) and (2), where the length of $P$ is as large as possible. Due to the assumption that the length of $P$ is as large as possible, we have the following property. 
\begin{itemize}
\item[$(\star)$] Every good path $P'$ in $G_0\cup M$ with vertex set $V(P') = V(P)$ satisfies that the endpoints of $P'$ do not have a neighbor outside $V(P)$.
\end{itemize}
For a vertex $u\in V(P)$, let $S(u)$ be the set of endpoints not equal to $u$ of good paths with the vertex set $V(P)$ with $u$ as one endpoint. Let $v$ be an endpoint of $P$. For every $x\in S(v)$, let $P_x$ be a $v$-$x$ path with $V(P_x) = V(P)$. Let $E = \{xy : x\in S(v) ~ \text{and} ~ y\in S(x)\}$. We will show that $E$ is a desired set of at least $n^2/32$ good boosters. To see that $|E|\ge n^2/32$, we will use the following variant of the standard P\'osa's lemma. 
\begin{lemma}[Lemma~7.7 in \cite{behague2025colour}]
Let $G_0$ be a graph, and let $M$ be a matching on $V(G_0)$. Let $H$ be an $M$-respecting $2$-expander. Let $P$ be a good path in $G_0\cup M$ satisfying $(\star)$, which has $v$ as an endpoint. Then, there are at least $|V(G_0)|/8$ vertices $u\in V(P)$ for which there is an $M$-respecting $v$-$u$ path in $G_0\cup M$ with vertex set $V(P)$.
\end{lemma}
We remark that this lemma is proved with a stronger assumption that $P$ is a longest good path in $G_0$. However, we can replace this with the assumption that $(\star)$ holds, as the proof only uses this weaker property.

By the above lemma, we have $|S(v)|\ge n/4$ and for each $x\in S(P,v)$, we have $|S(x)|\ge n/4$. This immediately implies that $|E|\ge n^2/32$. The only thing left to show is that every edge of $E$ is a good booster. Let $e\in E$. Clearly, the graph $G_0+e$ contains a good cycle $C_e$ on the vertex set $V(P)$. If $V(P) = V(G_0)$, then the graph $G_0+e$ contains a Hamilton cycle. If not, then the cycle $C_e$ along with the components of $F'$ minus $P$, forms a good $2$-factor of $G_0$ where $\ell(G_0+e)\ge \ell(C_e) > \ell(C) = \ell(G_0)$, as desired.
\end{proof}

\begin{claim}
If $G_0$ is an $n\times n$ bipartite graph with at least $n^2/32$ good boosters with respect to a matching $M$ on $V(G_0)$, then whp $G_0\cup G_{1000/n}\cup M$ contains a Hamilton cycle containing $M$.
\end{claim}
\begin{proof}
Let $m=10n$ and $p=100/n^2$. For every $i\in [m]$, let $G_i$ be an independent copy of $G_{100/n}$. Then, 
\[
\Pr(G_0\cup G_{1000/n}\cup M ~ \text{contains a good Hamilton cycle}) \ge \Pr(\cup_{i=0}^m G_i \cup M ~ \text{contains a good Hamilton cycle}).
\]
For every $j\in [m]$, define a Bernoulli random variable $X_j$ such that $X_j = 1$ if $G_j$ contains a good booster for the graph $\cup_{i=0}^{j-1} G_i$, and otherwise $X_j = 0$. Note that the event $\sum_{j\in [m]} X_j \ge n$ implies that the graph $\cup_{i=0}^m G_i \cup M$ contains a good Hamilton cycle. Thus, it is sufficient to show that whp $\sum_{j\in [m]} X_j \ge n$.  

Observe that $\Pr(X_j = 0) \le (1-p)^{n^2/32} \le 1/2$. Thus, a standard application of Azuma-Hoeffding inequality yields that the probability that $\sum_{j\in [m]} X_j$ is less than $n \le m/2 - n$ is at most $2e^{-\frac{n^2}{2m}} = o(1)$, as desired.
\end{proof}

{\bf Wrapping up the proof of \Cref{claim:ham_spread}.} Let $p^*$ be the probability that $G_0\cup G_{1000/n}\cup M$ does not contain a Hamilton cycle spanning all edges in $M$. We have shown that $p^*=o(1)$. Denote by $\mathcal{F}_G$ the family of subgraphs $H\subseteq G$ such that $H\cup M$ contains a Hamilton cycle spanning all edges in $M$.
We finally describe a procedure to sample Hamilton cycles with the desired spreadness property. For every graph $H\in \mathcal{F}_G$, fix a good Hamilton cycle $C_H$ in $H\cup M$. Now, consider the distribution on the good Hamilton cycles $C$, give it a measure of 
\[
\frac{1}{1-p^*}\sum_{H\in \mathcal{F}_G : C_H = C} \Pr(G_0\cup G_{1000/n} = H).
\]
This gives an $O(1/n)$-spread measure as every set of edges $E$ belongs to $G_0\cup G_{1000/n}$ with probability at most $(2000/n)^{|E|}$.
\end{proof}

\subsection{Proof of \Cref{thm:extremalsimple}}

Let $0 < \zeta \ll \alpha \ll \epsilon \ll 1$.  
Suppose we are given a non-exceptional, $\alpha$-extremal graph family $\mathbb{G}$ of edge-minimal Dirac graphs with respect to a fixed half-set $A_0\subseteq V(\mathbb{G})$. We say that a graph $G\in \mathbb{G}$ is $(1-\epsilon)$-bipartite if $e_G(A_0)+e_G(\overline{A_0})\le \epsilon n^2$. We let $m(\mathbb{G})$ be the number of graphs in $\mathbb{G}$ that are $(1-\epsilon)$-bipartite. For the proof of \Cref{thm:extremalsimple}, we consider the case distinction based on whether $m(\mathbb{G})\ge (1-\epsilon/100)n$ or not. 

\subsubsection{\textbf{Case-I: {\boldmath $m(\mathbb{G})\ge (1-\epsilon/100)n$}}, i.e., at least $(1-\epsilon/100)n$ graphs in $\mathbb{G}$ are $(1-\epsilon)$-bipartite graphs}

\begin{definition}\label{def:bipartitecleanup}
For a graph family $\mathbb{G}$ on $n$ vertices, let $\cP=\cP(\mathbb{G})$ be the set of all rainbow paths $P$ such that if we let $v_1=v_1(P)$ and $v_2=v_2(P)$ be the endpoints of $P$, and $C$ be the set of all colors not appearing on the edges in $E(P)$, then there exists an equibipartition of $\{v_1,v_2\} \cup (V(\mathbb{G})\setminus V(P))$ into two sets $A,B$ such that the following are satisfied. 
\begin{itemize}
\item[(1)] $v_1\in A$ and $v_2\in B$.
\item[(2)] $e_{G_c}(A,B)\ge (1-\epsilon^{1/2})|A||B|$ for $c\in C$.
\item[(3)] $\sum_{c\in C} d_{G_c}(v,A)\ge (1-\epsilon^{1/2})|A||C|$ for $v\in B$.
\item[(4)] $\sum_{c\in C} d_{G_c}(v,B)\ge (1-\epsilon^{1/2})|B||C|$ for $v\in A$.
\item[(5)] $|A|,|B|,|C|=\Omega(n)$.
\end{itemize}
\end{definition}

\begin{lemma}\label{lem:cleanupbipartite}
Let $\mathbb{G} = \{G_1,\ldots,G_n\}$ be an $\alpha$-extremal family of Dirac  graphs on $n$ vertices satisfying the condition of Case~I. Then, there exists an $O(1/n^2)$-spread measure on $\mathcal{P}(\mathbb{G})$.  In addition, there exists a collection of $\zeta n^2$ elements of $\mathcal{P}(\mathbb{G})$ that are pairwise edge-disjoint.
\end{lemma}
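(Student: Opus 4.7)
The plan is to construct a short random rainbow path $P$ that absorbs all the ``bad'' colors and vertices of $\mathbb{G}$; after deleting $P$'s interior, the residual family on the remaining vertices with the unused colors becomes a family of near-complete bipartite graphs with bipartition induced by $A_0$, which is exactly what Definition~\ref{def:bipartitecleanup} requires.

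First I would identify the obstructions. Let $B_{col} \subseteq [n]$ be the colors $c$ for which $G_c$ is not $(1-\epsilon)$-bipartite with respect to $A_0$; the Case-I hypothesis gives $|B_{col}| \leq 100\epsilon n$. For bad vertices, let $B_A$ be those $v \in A_0$ with $\sum_{c \notin B_{col}} d_{G_c}(v, A_0) > \epsilon^{1/2} n^2$, and define $B_{\overline A} \subseteq \overline{A_0}$ symmetrically. Since $\sum_{c \notin B_{col}} (e_{G_c}(A_0) + e_{G_c}(\overline{A_0})) \leq \epsilon n \cdot n^2$, double counting yields $|B_A|, |B_{\overline A}| = O(\epsilon^{1/2} n)$. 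Then I would build $P$ as follows: fix a uniformly random ordering of $B_{col}$, a uniformly random list of insertion slots for the vertices of $B_A \cup B_{\overline A}$, and a uniformly random start $v_1 \in A_0 \setminus B_A$. Grow $P$ edge by edge, where at each step the next color is either the next scheduled bad color or a fresh good color (needed when a bad vertex must be inserted or the side-count rebalanced), and the next vertex is chosen uniformly at random from all legal extensions: rainbow, no repeats, of the right color class, and --- outside scheduled insertions --- on the opposite side of $A_0/\overline{A_0}$. When a bad color has too few cross edges, I would use a two-edge excursion that returns to the current side, compensated later by a symmetric excursion on the other side. This produces a rainbow path of odd length $O(\epsilon^{1/2} n)$ with $v_1 \in A_0$, other endpoint $v_2 \in \overline{A_0}$, and equal numbers of interior vertices on the two sides of $A_0$.

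Setting $A := \{v_1\} \cup (A_0 \setminus V(P))$ and $B := \{v_2\} \cup (\overline{A_0} \setminus V(P))$ and letting $C$ be the unused colors, conditions (1) and (5) of Definition~\ref{def:bipartitecleanup} are immediate; (2) holds since every $c \in C$ is $(1-\epsilon)$-bipartite and only $O(\epsilon^{1/2}n)$ vertices have been removed; (3) and (4) hold since every remaining vertex lies outside $B_A \cup B_{\overline A}$. For the spread bound, the key observation is that at every extension step the number of eligible next edges is $\Omega(n)$, by the Dirac condition and the fact that only $O(\epsilon^{1/2}n)$ vertices have been used so far. A position-by-position calculation then gives $\Pr[(e,c) \in P] = O(1/n^2)$ for any fixed colored edge --- one factor of $1/n$ from the marginal distribution of the current endpoint (spread over $\Omega(n)$ vertices by the random choices made so far), another from the uniform choice of the next vertex --- and iterating gives $\Pr[S \subseteq P] \leq (c/n^2)^{|S|}$. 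For the $\zeta n^2$ edge-disjoint collection I would build paths sequentially and delete their edges from $\mathbb{G}$ at each step; since each path uses only $O(\epsilon^{1/2}n)$ edges, the Dirac condition and the sets $B_{col}, B_A, B_{\overline A}$ change only negligibly over $\zeta n^2$ iterations and the same construction can be repeated. The main obstacle is the coordination in the path construction: one has to exhaust $B_{col}$, cover $B_A \cup B_{\overline A}$, and finish with balanced side-counts in the interior, all while preserving $\Omega(n)$ random choices at every step so that the spread bound goes through.
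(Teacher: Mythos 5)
Your high-level strategy --- a short random rainbow path that absorbs the bad colors and bad vertices, followed by an application of the bipartite lemma (\Cref{lem:bipartite}) --- is exactly the paper's. The gap is the ``coordination'' issue you correctly flag at the end, and it is not a loose end you can afford to leave open: it is precisely where the non-exceptionality hypothesis (in force throughout Case~I of \Cref{thm:extremalsimple}, even though it is not repeated in the lemma statement) must enter. Concretely, a rainbow path $P$ from $A_0$ to $\overline{A_0}$ that exhausts every color outside the residual set $C$ and leaves $|A|=|B|$ necessarily has equal numbers of within-$A_0$ and within-$\overline{A_0}$ edges. When some bad color has no $A_0$--$\overline{A_0}$ edges, your ``compensating excursion on the other side'' requires a within-side edge on the opposite side, and such edges need not exist: take all good graphs equal to $K_{n/2,n/2}$ on $A_0\cup\overline{A_0}$ and one bad graph whose within-side edges all lie inside $A_0$ --- this family is exceptional, and no compensation is available. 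The paper handles this by first modifying $A_0$ to $A_2$ (its Step~1) so that either the imbalance $d=|A_2|-|B_2|\le 1$, in which case it invokes non-exceptionality to supply $\Omega(n^2)$ candidates for the parity edge $E_0^i$, or $d>1$, in which case Dirac's condition forces $\Omega(nd)$ within-$A_2$ edges of every good color, feeding the rebalancing matching $E_1^i$. Keeping $A_0$ fixed, as you do, gives you neither crutch.

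A secondary but substantive difference: the paper does not grow a single path online. It builds five separate edge sets $E_0^i,\dots,E_4^i$ (parity edge, rebalancing matching inside $A_2$, a $[n]\setminus C$-transversal matching, a bad-vertex cover, and a connector) and merges them only at the end. This makes the spread bound routine, because each colored edge has essentially one slot where it could be selected and there are $\Omega(n^2)$ eligible choices in that slot, whereas your ``marginal of the current endpoint is $O(1/n)$-spread'' argument must be verified conditionally on the scheduled color and is fragile exactly at the insertion and excursion steps, where the number of legal continuations can collapse to $O(1)$. Your sketch for the $\zeta n^2$ edge-disjoint paths is fine in outline and matches the paper's, but it inherits the same unresolved coordination gap.
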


We first prove \Cref{thm:extremalsimple} assuming the above lemma and then prove it right after.

\begin{proof}[{\bf Proof of \Cref{thm:extremalsimple} in Case~I}] 
To prove the first part of \Cref{thm:extremalsimple}, when $m(\mathbb{G})\ge (1-\epsilon/100)n$, first sample $P\in \mathcal{P}(\mathbb{G})$ according to the $O(1/n^2)$-spread measure given by \Cref{lem:cleanupbipartite} and then apply \Cref{lem:bipartite} with $a=v_1(P)$ and $b=v_2(P)$ and graph family $\mathbb{G}'$ given by $G_i[A,B], i\in C$, where $A,B,C$ are as in \Cref{def:bipartitecleanup}. 
To construct $\zeta n^2$ pairwise edge-disjoint Hamilton $\mathbb{G}$-transversals, first apply \Cref{lem:cleanupbipartite} to find a collection $\{P_1,\ldots,P_{\zeta n^2}\}$ of $\zeta n^2$ elements of $\mathcal{P}(\mathbb{G})$ that are pairwise edge-disjoint. Then, for $i\in [\zeta n^2]$, construct a new Hamilton $\mathbb{G}$-transversal, as before, that spans $P_i$ and avoids all the edges that are spanned by the Hamilton transversals constructed so far or by the paths in $\{P_1,\ldots,P_{\zeta n^2}\}$. This can be done since, at each step, we avoid at most $2\zeta n^2$ edges incident to each vertex, thus we can still apply \Cref{lem:bipartite}.
\end{proof}

\begin{proof}[{\bf Proof of \Cref{lem:cleanupbipartite}}] 
We let $C$ be the set of indices of the $(1-\epsilon)$-bipartite graphs in $\mathbb{G}$. Let $B_0=\overline{A_0}$. We first slightly modify the sets $A_0,B_0$. We then build the paths $P_1,\ldots,P_{\zeta n^2}$, where $E(P_i)$ will be the union of at most $5$ edge sets $E_0^i,E_1^i,\ldots,E_4^i$. The purpose of  $E_1^i$, $E_2^i$, and $E_3^i$ is to cover ``bad" colors and vertices, i.e., colors and vertices that do not satisfy conditions (2), (3), and (4).  $E_0^i$ has at most $1$ edge, and its purpose is to address parity issues that could arise.  Finally, $E_4^i$ is used to join the edges in $E_0^i\cup E_1^i\cup E_2^i\cup E_3^i$ into a path~$P$.

\textbf{Step 1: Modify the sets {\boldmath $A_0,B_0$} and construct the edge-set {\boldmath $E_0^i$}.}
We will modify the sets $A_0, B_0$ to obtain a partition $A_1, B_1$ of $V(\mathbb{G})$ such that $\sum_{c\in C}d_{G_c}(v,A_1)\ge 1/3 \sum_{c\in C}d_{G_c}(v)$ for every $v\in B_1$ and $\sum_{c\in C}d_{G_c}(v,B_1)\ge 1/3 \sum_{c\in C}d_{G_c}(v)$ for every $v\in A_1$. To achieve this, starting from the partition $A_0,B_0$, we iteratively relocate a vertex $v\in A_0$ from $A_0$ to $B_0$ if $\sum_{c\in C}d_{G_c}(v,B_0)< 1/3 \sum_{c\in C}d_{G_c}(v)$ (equivalently, $\sum_{c\in C}d_{G_c}(v,A_0)> 2/3 \sum_{c\in C}d_{G_c}(v)$), and also similarly relocate vertices from $B_0$ to $A_0$. Let $A_1$, $B_1$ be the updated partition (this relocation process will terminate after at most $18\alpha n$ steps as shown below). As $\mathbb{G}$ is $\alpha$-extremal with respect to $A_0$ and $G_c$ is $(1-\epsilon)$-bipartite for every $c\in C$, we have $\sum_{c\in C} e_{G_c}(A_0) \le \sum_{c\in C} \min\{e_{G_c}(A_0), e_{G_c}(A_0,B_0)\}\le \alpha n^3$. Thus, $\Phi:= \sum_{c\in C} e_{G_c}(A_0,B_0) \ge |C||A_0|n/2 - 2\alpha n^3 \ge |C| (1/4 -3\alpha)n^2$. Note also that for every $A\subseteq V(\mathbb{G})$, we have $\sum_{c\in C} e_{G_c}(A,V(\mathbb{G})\setminus A) \le |C| n^2/4$.
Each time we relocated a vertex from $A_0$ to $B_0$, the number $\Phi$ (with respect to the updated $A_0, B_0$) increases by at least $|C|n/6$, thus at most $\frac{3\alpha|C|n^2}{|C|n/6} = 18\alpha n$ vertices are relocated. By symmetry, we may assume that $|A_1|\ge |B_1|$. Hence, $0\le |A_1|-|B_1|\le 36\alpha n$ and $|A_0\triangle A_1|\le 18\alpha n$.

Then, relocate from $A_1$ to $B_1$, one by one, vertices $v\in A_1$ such that $\sum_{c\in C}d_{G_c}(v,A_1)\ge 0.0001 n^2$ until $|A_1|-|B_1|\le 1$ or no such vertices exist. Let $A_2,B_2$ be the resultant sets. Note that $|A_1\setminus A_2|\le (|A_1|-|B_1|)/2 \le 18\alpha n$. Since $\sum_{c\in C}d_{G_c}(v,B_2) \ge \sum_{c\in C}d_{G_c}(v,B_1)$ for $v\in A_2$ (as $B_2\supseteq B_1)$  and $\sum_{c\in C}d_{G_c}(v,A_2)\ge \sum_{c\in C} (d_{G_c}(v,A_1) - |A_1\setminus A_2|)$ for $v\in B_2$, we have 
\begin{equation}\label{eq:lower bound on across edges}
\text{$\sum_{c\in C}d_{G_c}(v,B_2) \ge 0.00005 n^2$ for $v\in A_2$ and $\sum_{c\in C}d_{G_c}(v,A_2)\ge 0.00005 n^2$ for $v\in B_2$.}
\end{equation} 
Since $|A_0\triangle A_2|\le |A_0\triangle A_1| + |A_1\setminus A_2| \le 36\alpha n$, 
\begin{equation}\label{eq:number of edges between A2 B2}
\text{the graph $G_c[A_2,B_2]$ spans at least $(1/4-\epsilon-36\alpha)n^2\ge (1/4-2\epsilon)n^2$ edges for every $c\in C$.}
\end{equation}

If $|A_2|>|B_2|$ and $|C|$ is odd, then for $i\in [\zeta n^2]$, let $c^*_i$ be a uniform element of $C$ such that there exists an edge of color $c^*_i$ spanned by $A_2$ not among $\{e_{c^*_1},\ldots,e_{c^*_{i-1}}\}$ and $e_{c^*_i}$ be a random such edge (initially there exist at least $n/4$ eligible edges for each color $c \in C$). Else, if $|A_2|=|B_2|$ and $|C|$ is odd, then for $i\in [\zeta n^2]$, let $e_{c^*_i}$ be a uniform element taken from the set of edges that either have color $c^*_i\in C$ and are not spanned by $A_2\times B_2$ or have color $c^*_i\in [n]\setminus C$ and are spanned by $A_2\times B_2$ not among $\{e_{c^*_1},\ldots,e_{c^*_{i-1}}\}$. As our graph family is not exceptional, initially, there exist at least $0.01n^2$ such edges. In both cases, we let $E_0^i=\{e_{c^*_i}\}$ and $C_0^i=\{c^*_i\}$ for all $i\in [\zeta n^2]$. When $|C|$ is even, we let $E_0^i=C_0^i=\emptyset$ for all $i\in [\zeta n^2]$. Let $E_0=\bigcup_{i\in [\zeta n^2]}E_0^i$.

\textbf{Step 2: Construct the edge-set {\boldmath $E_1^i$} in {\boldmath $A_2$} to ``balance" the sets {\boldmath $A_2,B_2$}.}
Let $d=|A_2|-|B_2|$. Note that $0\le d\le |A_1|-|B_1|\le 36\alpha n$. If $d\in \{0,1\}$, then let $E_1^i=\emptyset$ for all $i\in [\zeta n^2]$. Assume $d\ge 2$ in the following.
We now construct a $C\setminus C_0^1$-rainbow matching $E_1^1$ of size $2 \lfloor d/2 \rfloor$ spanned by $A_2$ respecting an $O(1/n^2)$-spread measure. (The number $2 \lfloor d/2 \rfloor$ is carefully chosen to balance the sets $A_2, B_2$ appropriately.) Note that $A_2$ spans at least $|A_2|d/4$ edges of every color in $C$. For $i\in \{1,\cdots,2 \lfloor d/2 \rfloor\}$, choose a uniformly random edge not in $E_0$ and not incident to the edges added to $E_0^1\cup E_1^1$ so far of a distinct color $c\in C\setminus C_0^1$. 
Then, at step $i\in [2 \lfloor d/2 \rfloor]$, for each vertex $v\in A_2$, as there are in total (counting with multiplicity) at most $0.0001n^2$ edges $uv$ of color in $C$ such that $u\in A_2$ (due to the termination condition of Step~1 and $A_2\subseteq A_1$), there exist at least $(|C|-d)|A_2|d/4 - 2d\cdot 0.0001n^2 -|E_0|\ge 0.2dn^2-\zeta n^2 \ge 0.1dn^2$ eligible edges. Therefore, each colored edge is chosen with probability at most $10n^{-2}$ during this step (conditioned on the event that any other set of colored edges are previously chosen). 

Similarly, for $2\le i\le \zeta n^2$, construct the set $E_1^i$ such that it is disjoint from each of the sets $E_0$ and $E_1^1,\ldots,E_1^{i-1}$ (once again, at each step there exist at least $0.2dn^2-\zeta d n^2 \ge 0.1dn^2$ eligible edges). Then set $E_1=\cup_{i\in [\zeta n^2]}E_1^i$.
In the same way, for $i\in [\zeta n^2]$, perform the following steps always selecting edges that have not been chosen so far.

\textbf{Step 3: Construct an {\boldmath $[n]\setminus (C\cup C_0^i)$}-transversal matching {\boldmath $E_2^i$} disjoint from {\boldmath $A_2\times B_2$}.}
For $i\in [\zeta n^2]$, we now construct, in a random greedy manner, an $[n]\setminus (C\cup C_0^i)$-transversal matching $E_2^i$ disjoint from $A_2\times B_2$ and $E_0\cup E_1$ such that $E_0^i\cup E_1^i\cup E_2^i$ forms a matching.
For every $c\in [n]\setminus (C\cup C_0^i)$, sequentially, if currently there are more edges in $E_0^i\cup E_2^i$ that are spanned by $A_2$ than by $B_2$, then choose a random edge in color $c$ spanned by $B_2$, else choose a random edge spanned by $A_2$. This gives us the desired spreadness because for every $c\notin C$,  the number of edges of $G_c$ spanned by both $B_2$ and $A_2$ is at least 
\[
\min\{e_{G_c}(A_2), e_{G_c}(B_2)\} - \zeta n^2 - 2n(|E_0^i| + |E_1^i| + (n-|C|)) \ge \epsilon n^2/5 - \zeta n^2 - 2n (1 + 18\alpha n + \epsilon n/100)
\ge \epsilon n^2/6,
\]
where the lower bound on $e_{G_c}(A_2)$ (and similarly on $e_{G_c}(B_2)$) can be reasoned as follows. First, $c\notin C$ implies that $G_c$ is not $(1-\epsilon)$-bipartite with respect to $A_0$ and therefore, as $G_c$ is an edge-minimal Dirac graph, we have $e_{G_c}(A_0)\ge \epsilon n^2/4$ (by applying \Cref{lem:complement is also sparse}). Then, $e_{G_c}(A_2)\ge e_{G_c}(A_0)- n|A_0\triangle A_2| \ge  \epsilon n^2/4-30\alpha n^2\ge \epsilon n^2/5$, as desired. The above inequality gives that at each step there are $\Omega(n^2)$ eligible edges to choose from. 

\textbf{The analysis for the number of edges used in {\boldmath $A_2$} and {\boldmath $B_2$}.}
We now show that 
\begin{equation}\label{eq:difference for 0 and 2}
\text{$\left|(E_0^i\cup E_2^i)\cap \binom{A_2}{2}\right| - \left|(E_0^i\cup E_2^i)\cap \binom{B_2}{2}\right|$ equals $1$ if $d$ is odd, and equals $0$ if $d$ is even.}
\end{equation}
To see this, we will analyze all parity cases that can arise. Note that $d$ and $n$ always have the same parity. First, if $|A_2| > |B_2|$ and $|C|$ is odd, then $C_0^i\subseteq C$, the edge in $E_0^i$ is spanned by $A_2$, and thus $|E_0^i\cup E_2^i|$ has the same parity as $d$ or $n$. This ensures the validity of \eqref{eq:difference for 0 and 2}. Second, if $|A_2| = |B_2|$ (i.e., d=0) and $|C|$ is odd and $C_0^i\subseteq C$, then the edge in $E_0^i$ is spanned by $A_2$ or $B_2$, and $|E_0^i\cup E_2^i|$ is even. Hence, \eqref{eq:difference for 0 and 2} holds. Third, if $|A_2| = |B_2|$ and $|C|$ is odd and $C_0^i\not\subseteq C$, then the edge in $E_0^i$ is spanned by $A_2\times B_2$, and $|E_2^i|$ is even. Thus, \eqref{eq:difference for 0 and 2} holds. Finally, if $|C|$ is even, then $E_0^i = \emptyset$ and $|E_2^i|$ has the same parity as $d$ or $n$. This finishes the proof of \eqref{eq:difference for 0 and 2}.

Let $E^i=E_0^i\cup E_1^i\cup E_2^i$. Then, $E^i$ is a rainbow matching of size at most $\epsilon n/50$ that uses all the colors not in~$C$. Furthermore, by \eqref{eq:difference for 0 and 2} and the fact that $|E_1^i\cap \binom{A_2}{2}| = 2\lfloor d/2\rfloor$ and $|E_1^i\cap \binom{B_2}{2}| = 0$, we have 
\begin{equation}\label{eq:difference between number of edges}
\left|E^i\cap \binom{A_2}{2}\right| - \left|E^i\cap \binom{B_2}{2}\right| = |A_2| - |B_2|.
\end{equation}

\textbf{Setup for upcoming steps.}
We will now informally describe and prepare for the final two steps. We will create edge sets $E_3^i\cup E_4^i$ spanned by $A_2\times B_2$ and having a distinct color in $C$, not used so far while constructing $P_i$. Assume that at the end, $\bigcup_{j=0}^4 E_j^i$ spans a path $P_i$ of size at most $0.001n$. The above automatically ensures that $P_i$ is rainbow and every color not used on $E(P_i)$ belongs to $C$. In addition, if we let $v_1(P_i),v_2(P_i)$ be its two endpoints, then we will show that one of them belongs to $A_2$, say $v_1(P_i)$ and the other to $B_2$. If we further let $C(P_i)$ be the set of colors not appearing on $P_i$, and let $A(P_i)=A_2\setminus (V(P_i)\setminus \{v_1(P_i)\})$ and $B(P_i)=B_2\setminus (V(P_i)\setminus \{v_2(P_i)\})$, then we will show the sets $A(P_i),B(P_i),C(P_i)$ witness the fact that $P_i$ belongs to the set $\cP(\mathbb{G})$.

Let $V_A$ be the set of vertices $v\in A_2$ such that $\sum_{c\in C}d_{G_c}(v,B_2) \le (1/2 - 5\epsilon) n^2$.  Similarly let $V_B$ be the set of vertices $v\in B_2$ such that $\sum_{c\in C}d_{G_c}(v,A_2) \le (1/2 - 5\epsilon) n^2$. Thus, by using \eqref{eq:number of edges between A2 B2}, we have $|V_A|,|V_B|\le 5 \epsilon n$.

\textbf{Step 4: Construct a {\boldmath $C$}-rainbow set {\boldmath $E_3^i\subset A_2\times B_2$} to cover the vertices of small degree.} 
We continue and cover the vertices  $V_A\cup V_B$ by short rainbow paths. For that, we greedily match (at random) the vertices in $(V_A\cup V_B)\setminus V(E^i)$ twice to vertices in $(A_2\cup B_2)\setminus (V(E^i)\cup V_A\cup V_B)$ and the vertices in $(V_A\cup V_B)\cap V(E^i)$ to vertices in $(A_2\cup B_2)\setminus (V(E^i)\cup V_A\cup V_B)$ once. The set of (colored) edges $E_3^i$ chosen during this step is constrained such that $E^i\cup E_3^i$ is acyclic and disjoint from $E_0,E_1,E_2$ and $\cup_{j<i}E_3^j$. We then set $F^i=E^i\cup E_3^i$. 
Thus, by using \eqref{eq:lower bound on across edges}, at any point during Step~4, there exists at least $0.00005n^2 -|V(E^i)|n -3|V_A|n-3|V_B|n-\zeta n^2 \ge 10^{-5}n^2$ eligible edges to choose from. This gives the $O(1/n^2)$-desired spreadness of the set $E_3^i$. 

\textbf{Step 5: Construct a {\boldmath $C$}-rainbow set {\boldmath $E_4^i\subset A_2\times B_2$} to merge the edges in {\boldmath $F^i$} into a single path~{\boldmath $P_i$}.} 
Let $V^i$ be a random set of vertices not incident to $F^i$ of size $\epsilon n$ (which we also consider as a set of $\epsilon n$ paths of length $0$) and $Q_1,Q_2,\ldots,Q_w$ be a random ordering of the paths that either are spanned by $F^i$ or belong to $V^i$. Let $v_j^+,v_j^-$ the two endpoints of $Q_j$ for all $j\in[w]$. For every $j\in [w-1]$, we join $v_j^+$ to $v_{j+1}^-$ via a random path of length $2$ if both $v_j^+,v_{j+1}^-$ belong to $A$ or both belong to $B$ and via a random path of length $3$ otherwise. These paths are constrained such that the set of edges chosen so far at iteration $j$ is acyclic, and no edge is chosen twice overall. Finally, if both $v_1^-, v_w^+$ belong to $A$ or both belong to $B$, then add a new random edge incident to $v_w^+$ (all of the edges are of distinct colors). Since every vertex not incident to a single edge of $F^i$ has degree at least $0.49n^2$ (incorporating all remaining colors), there are $\Omega(n^4)$ distinct choices for paths of length 2 and $\Omega(n^6)$ distinct choices for paths of length 3 at each point.

We let $P_i$ be the resultant path with endpoints $x_i$ and $y_i$. We will show that $P_i\in \cP(\mathbb{G})$ for every $i\in [\zeta n^2]$. For that, letting $Z_i:= \{x_i,y_i\}\cup (V(\mathbb{G})\setminus V(P_i))$, we will verify the conditions of \Cref{def:bipartitecleanup} with respect to the bipartition of $Z_i$ into $Z_i\cap A_2$ and $Z_i\cap B_2$. Since $P_i$ is a path with one endpoint in $A_2$ and the other in $B_2$ and $|e(P_i)\cap \binom{A_2}{2}| - |e(P_i)\cap \binom{B_2}{2}| = |A_2| - |B_2|$ (by \eqref{eq:difference between number of edges}), we conclude that $|V(P_i)\cap A_2| - |V(P_i)\cap B_2| = |A_2| - |B_2|$. Thus, $Z_i\cap A_2$ and $Z_i\cap B_2$ form an equibipartition of $Z_i$. Additionally, the conditions (1)--(5) follow from our construction. This shows that $P_i\in \cP(\mathbb{G})$ for every $i\in [\zeta n^2]$. It is a routine to check that the above 5-step procedure for constructing $P_1$ gives an $O(1/n^2)$-spread measure on $\cP(\mathbb{G})$. This completes the proof of \Cref{lem:cleanupbipartite}.
\end{proof}

\subsubsection{\textbf{Case-II: {\boldmath $m(\mathbb{G})\le (1-\epsilon/100)n$}}, i.e., at most $(1-\epsilon/100)n$ graphs in $\mathbb{G}$ are $(1-\epsilon)$-bipartite}

\begin{definition}\label{def:cleanupcliques}
For a graph family $\mathbb{G}$ on $n$ vertices, let $\cP'=\cP'(\mathbb{G})$ be the set of vertex-disjoint pairs of paths $(P_1,P_2)$ with endpoints $v_1(P_1),v_2(P_1)$ and $v_1(P_2), v_2(P_2)$, whose edge set is rainbow and satisfy the following. Let $V(P_1,P_2)=(V(P_1)\cup V(P_2))\setminus \{v_1(P_1),v_2(P_1),v_1(P_2),v_2(P_2)\}$ and $C$ be the set of all colors not appearing in $E(P_1)\cup E(P_2)$. Then there exists a bipartition of $V(\mathbb{G})\setminus V(P_1,P_2)$ into two sets $A,B$ such that the following hold. 
\begin{itemize}
    \item[(1)] $v_1(P_1),v_1(P_2)\in A$ and $v_2(P_1),v_2(P_2)\in B$. 
    \item[(2)] $e_{G_c}(W)\ge (1-\epsilon^{1/2})\binom{|W|}{2}$ for $c\in C$ and $W\in \{A,B\}$. 
    \item[(3)] $\sum_{c\in C} d_{G_c}(v,W)\ge (1-\epsilon^{1/2})|W||C|$ for $W\in \{A,B\}$ and $v\in W$.
    \item[(4)] $|A|,|B|,|C|=\Omega(n)$ and $|A|,|B|$ are even.
\end{itemize}
\end{definition}

\begin{lemma}\label{lem:cleanupcliques}
Let $0 < \zeta \ll \alpha \ll \epsilon \ll 1$. Let $\mathbb{G}=\{G_1,\ldots,G_n\}$ be a family of Dirac graphs on $n$ vertices such that $\mathbb{G}$ is $\alpha$-extremal with respect to the half-set $A_0$ and $m(\mathbb{G})\le (1-\epsilon/100)n$. Then, there exists an $O(1/n^2)$-spread measure on $\{E(P_1)\cup E(P_2):(P_1,P_2)\in \mathcal{P}'(\mathbb{G})\}$. In addition, there exists a collection of $\zeta n^2$ sets $\{E(P_1)\cup E(P_2):(P_1,P_2)\in \mathcal{P}'(\mathbb{G})\}$ that are pairwise edge-disjoint.
\end{lemma}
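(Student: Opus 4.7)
The plan is to adapt the five-step construction from the proof of \Cref{lem:cleanupbipartite}, swapping the roles of \emph{intra-side} and \emph{cross-side} edges, and with the extra requirement that the output decomposes into two vertex-disjoint paths each crossing once between $A$ and $B$ rather than a single path.

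First, I will let $C \subseteq [n]$ be the set of \emph{clique-like} colors, namely the colors $c$ with $e_{G_c}(A_0,\overline{A_0}) \le \alpha^{1/2} n^2$. Combining the $\alpha$-extremality of $\mathbb{G}$ with the hypothesis $m(\mathbb{G}) \le (1-100\epsilon)n$, the edge-minimality of each $G_c$, and the standard stability theory for Dirac's theorem, I will deduce that $|C| \ge (1-\alpha^{1/4})n$ and that for every $c \in C$ the graph $G_c$ differs from the union of two cliques on $A_0$ and $\overline{A_0}$ in at most $O(\alpha^{1/2})n^2$ edges; moreover, by definition, every bad color $c \notin C$ has $e_{G_c}(A_0,\overline{A_0}) \ge \alpha^{1/2}n^2$, so cross-edges of that color are abundant. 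I then run the relocation argument of Step~1 in the proof of \Cref{lem:cleanupbipartite} (moving $v$ from $A_0$ to $\overline{A_0}$ whenever, counted over $c \in C$, a majority of $v$'s $c$-neighbors lie on the wrong side) to obtain balanced sets $A_2,B_2$ with $|A_2 \triangle A_0| = O(\alpha n)$ and $\sum_{c \in C} e_{G_c}(v, A_2) \ge (1-o(1))|A_2||C|$ for every $v \in A_2$, and symmetrically for $B_2$.

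The edge set of each pair $(P_1^i,P_2^i)$ is then built in five random greedy steps, mirroring Case~I:
\begin{itemize}
\item[(1)] $E_0^i$: at most one edge, used to control the parity of $|A|,|B|$ (both must end up even) and the parity of the total number of cross-edges in $P_1^i \cup P_2^i$ (which must be even).
\item[(2)] $E_1^i$: a $C$-rainbow matching of $|A_2|-|B_2| = O(\alpha n)$ cross-edges in $A_2\times B_2$ to balance the two sides.
\item[(3)] $E_2^i$: a rainbow matching assigning each bad color $c \in [n]\setminus C$ to a cross-edge of color $c$ in $A_2 \times B_2$; plentiful since $e_{G_c}(A_2, B_2) = \Omega(n^2)$ for $c \notin C$.
\item[(4)] $E_3^i$: short rainbow paths covering the $O(\alpha^{1/2} n)$ vertices of low within-side clique-degree, using intra-side edges of colors in $C$.
\item[(5)] $E_4^i$: random length-$2$ and length-$3$ intra-side extensions with fresh colors in $C$, merging the edges above into exactly two vertex-disjoint $A_2$-to-$B_2$ paths $P_1^i, P_2^i$.
\end{itemize}
At every random choice there are $\Omega(n^2)$, $\Omega(n^4)$, or $\Omega(n^6)$ eligible options, which yields the required $O(1/n^2)$-spread distribution on $\cP'(\mathbb{G})$. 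To produce $\zeta n^2$ pairwise edge-disjoint elements I will iterate the procedure while forbidding the edges already used; since only $O(\zeta n^2) \ll n^2$ edges are blocked at any vertex, the lower bounds above remain valid throughout.

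The main obstacle will be Step~(5): the output must consist of \emph{exactly two} vertex-disjoint paths, each with one endpoint in $A_2$ and one in $B_2$, while simultaneously preserving the rainbow property and the evenness of $|A|$ and $|B|$. This requires the number of cross-edges assembled in Steps~(1)--(3) to have the right parity, and the merging procedure in Step~(5) to coalesce the partial configuration into two components (not one, not several). I plan to handle this by a short case analysis on $|[n]\setminus C| \bmod 2$ and on $(|A_2|-|B_2|)\bmod 2$, using $E_0^i$ as a parity-correcting slot and an alternating merging strategy in Step~(5) that first pairs up the already-existing cross-edge components on each side and then absorbs the remaining vertices, stopping one merge short of producing a single path. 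This parity bookkeeping is the most delicate part of the argument, but it is handled in the same spirit as the odd-$|C|$ case analysis in Step~1 of the proof of \Cref{lem:cleanupbipartite}, now applied to two parities simultaneously.
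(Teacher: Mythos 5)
Your overall plan is in the right spirit (reuse the five-step random-greedy construction from Case~I after repartitioning the colors), but there is a genuine gap: the claim that the set $C$ of clique-like colors satisfies $|C|\ge (1-\alpha^{1/4})n$ is false under the hypotheses of this lemma, and your Step~(3) breaks precisely because of this.

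The hypothesis $m(\mathbb{G})\le (1-100\epsilon)n$ only prevents \emph{almost all} graphs from being $(1-\epsilon)$-bipartite; it is perfectly consistent with the assumptions that, say, $(1-100\epsilon)n$ of the colors are $(1-\epsilon)$-bipartite (each contributing only $\approx \alpha n^2$ to the extremality sum, which is allowed since $\alpha\ll\epsilon$), while the remaining $\approx 100\epsilon n$ colors are clique-like. In that scenario, $|C|\approx 100\epsilon n$, not $(1-\alpha^{1/4})n$. Consequently $[n]\setminus C$ can contain up to $(1-100\epsilon)n$ colors, and your Step~(3) proposes to cover all of them by a rainbow \emph{matching} of cross-edges. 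Such a matching would use about $2(1-100\epsilon)n>n$ vertices once $\epsilon<1/200$, which is impossible. The paper avoids this by distinguishing the \emph{$(1-\epsilon)$-bipartite} colors ($C_1$, which can be a constant fraction of $n$) from the small residual class $C_2$ (which is $O(\alpha/\epsilon)n\le \epsilon^2 n$), and then covering $C_1$ by a single $C_1$-rainbow \emph{path} of cross-edges (which uses only $|C_1|+1$ vertices rather than $2|C_1|$) when $|C_1|$ is large, reserving a matching only for $C_2$ and for the small-$|C_1|$ case. Your two-way partition into ``clique-like'' vs.\ ``everything else'' loses this crucial distinction, and your Step~(3) does not work in the regime where a constant fraction of the colors are close to bipartite. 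The parity bookkeeping you flag as the main obstacle is in fact routine (and handled as in Case~I); the missing piece is the path-versus-matching case split for the bipartite-like colors.
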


We first prove \Cref{thm:extremalsimple} assuming the above lemma and then prove it right after.
\begin{proof}[{\bf Proof of \Cref{thm:extremalsimple} in Case~II}]
To prove the first part of \cref{thm:extremalsimple}, when $m(\mathbb{G})\le (1-\epsilon/100)n$, first sample $(P_1,P_2)\in \mathcal{P}'(\mathbb{G})$ according to the spread measure given by \Cref{lem:cleanupcliques} and let $A,B,C$ be as in \Cref{def:cleanupcliques}. Then, partition $C$ into two sets $C_A$ and $C_B$ of size $|A|-1$ and $|B|-1$ respectively. Next, take a random bipartition of $A$ into two equal sets $A_1,A_2$ with $v_1(P_1)\in A_1$ and $v_1(P_2)\in A_2$ and apply \Cref{lem:bipartite} (whose hypotheses can be checked as a routine) to sample a $v_1(P_1)$-$v_1(P_2)$ rainbow Hamilton path $P_A$ on $A$ that is $C_A$-transversal. Similarly sample a $v_2(P_1)$-$v_2(P_2)$ rainbow Hamilton path $P_B$ on $B$ that is $C_B$-transversal. Then concatenate the paths $P_1$, $P_2$, $P_A$, and $P_B$ to get a Hamilton cycle drawn from an $O(1/n^2)$-spread distribution that is $[n]$-transversal.

To construct $\zeta n^2$ pairwise edge-disjoint Hamilton $\mathbb{G}$-transversals, first let $\{S_1,\ldots,S_{\zeta n^2}\}$ be a collection of $\zeta n^2$ sets $\{E(P_1)\cup E(P_2):(P_1,P_2)\in \mathcal{P}'(\mathbb{G})\}$ that are pairwise edge-disjoint. Then, for $i\in [\zeta n^2]$, construct a new Hamilton $\mathbb{G}$-transversal, as before, that spans $S_i$ and avoids all the edges that are spanned by the Hamilton transversals constructed so far or by the paths in $\{S_1,\ldots,S_{\zeta n^2}\}$. This is possible because, at each step, we avoid at most $2\zeta n^2$ edges incident to each vertex.
\end{proof}

\begin{proof}[{\bf Proof of \Cref{lem:cleanupcliques}}]
The proof of this lemma is similar to the proof of \Cref{lem:cleanupbipartite}, so we only present a sketch of it. 
Set $B_0=\ccA_0$. Let $C_1$ be the set of indices of the $(1-\epsilon)$-bipartite graphs. Let $C_2$ be the set of indices $c \in [n]\setminus C_1$ such that $e_{G_c}(A_0,B_0)\ge 0.1 n^2$, and $C_3=[n]\setminus (C_1\cup C_2)$. Since $\mathbb{G}$ is a family of edge-minimal Dirac graphs (and thus \Cref{lem:complement is also sparse} applies for each of these graphs), $\mathbb{G}$ is $\alpha$-extremal with respect to $A_0$, and no graph with index in $C_2$ is $(1-\epsilon)$-bipartite, we have that $|C_2|\le \frac{\alpha n^3}{\epsilon n^2/4} \le \epsilon^2 n$. Therefore, 
\begin{equation}\label{eq:lower bound on C3}
|C_3|\ge (\epsilon/100-\epsilon^2)n.
\end{equation}

\textbf{Step 1: Modify the sets {\boldmath $A_0,B_0$}.}
As in Step~1 of Case~I, we modify the sets $A_0, B_0$ to obtain a partition $A_1, B_1$ of $V(\mathbb{G})$ such that $|A_0\triangle A_1|\le 18\alpha n$ and $\sum_{c\in C_3}d_{G_c}(v,A_1)\ge 1/3 \sum_{c\in C_3}d_{G_c}(v)$ for every $v\in A_1$ and $\sum_{c\in C_3}d_{G_c}(v,B_1)\ge 1/3 \sum_{c\in C_3}d_{G_c}(v)$ for every $v\in B_1$.

In the remainder of the proof, we will focus on proving the first part of \Cref{lem:cleanupcliques}. For the second part of \Cref{lem:cleanupcliques}, for every $i\in [\zeta n^2]$, perform the following five steps, always selecting edges that have not been chosen so far (as in the proof of \Cref{lem:cleanupbipartite}).

\textbf{Step 2: Adjusting the parity of~{\boldmath $C_1\cup C_2$}.}
The first paragraph motivates this step, while the following paragraphs implement it. In order to have a Hamilton cycle, we must use a positive even number of edges in $A_1\times B_1$. We will use all the colors in $C_1\cup C_2$ for edges in $A_1\times B_1$. Thus, if $|C_1\cup C_2|$ is odd, then we need to use one color from $C_3$ to color an edge in $A_1\times B_1$. Moreover, when $C_1\cup C_2 = \emptyset$, then we need to use two colors from $C_3$ to color an edge in $A_1\times B_1$. We will more generally do this when $|C_1\cup C_2|$ is even (to reduce the number of case distinctions).

If $|C_1\cup C_2|$ is odd, then let $c'$ be a random color in $C_3$ and $e'$ be a random edge of $G_{c'}$ in $A_1\times B_1$. For the justification of appropriate spreadness, note that for each of the $\Omega(n)$ colors in $C_3$ there are at least $n/2$ eligible edges from $A$ to $B$, yielding $\Omega(n^2)$ choices in total. We then set $C_0=\{c'\}$ and $E_0=\{e'\}$. 

If $|C_1\cup C_2|$ is even, then let $c', c''$ be two random colors in $C_3$ and $e', e''$ be two non-adjacent random edges of $G_{c'}$ and $G_{c''}$ induced by $A_1\times B_1$. The appropriate spreadness can be argued easily as before. We then set $C_0=\{c', c''\}$ and $E_0=\{e', e''\}$. 

We finally remove the colors in $C_0$ from $C_3$ and let $A'_1 = A_1\setminus V(E_0)$ and $B'_1 = B_1\setminus V(E_0)$. After this modification, we note that $|A_0\triangle A'_1|\le 40\alpha n$ and 
\begin{equation}\label{eq:step2 new}
\text{$\sum_{c\in C_3}d_{G_c}(v,A'_1)\ge 2|C_3|n/7$ for every $v\in A'_1$ and $\sum_{c\in C_3}d_{G_c}(v,B'_1)\ge 2|C_3|n/7$ for every $v\in B'_1$.}
\end{equation}

\textbf{Step 3: Cover vertices of ``small degree" using two {\boldmath $C_3$} rainbow paths.}
Let $V_A$ and $V'_A$ be the sets of vertices $v\in A'_1$ such that $\sum_{c\in C_3}d_{G_c}(v,A'_1) \le (1-\epsilon^2) |C_3||A'_1|$ and $\sum_{c\in C_1}d_{G_c}(v,B'_1) \le (1-\epsilon^2) |C_1||B'_1|$, respectively. The condition $|A_0\triangle A'_1|\le 40\alpha n$ and $\alpha$-extremality of $\mathbb{G}$ imply that $\epsilon ^2 |V_A| |C_3||A'_1| - 40\alpha n^3\le \alpha n^3$ and $\epsilon ^2 |V'_A| |C_1||B'_1| - 40\alpha n^3\le \alpha n^3$. Therefore, using \eqref{eq:lower bound on C3}, we have $|V_A|\le \epsilon^3 n$ and $|V'_A|\le \epsilon^3 n$. Similarly, let $V_B$ and $V'_B$ be the set of vertices $v\in B'_1$ such that $\sum_{c\in C_3}d_{G_c}(v,B'_1) \le (1-\epsilon^2) |C_3||B'_1|$ and $\sum_{c\in C_1}d_{G_c}(v,A'_1) \le (1-\epsilon^2) |C_1||A'_1|$, respectively. Then $|V_B|\le \epsilon^3 n$ and $|V'_B|\le \epsilon^3 n$.

Now,  \eqref{eq:lower bound on C3} and \eqref{eq:step2 new} imply that for every  $C'\subseteq C_3$ of size at most $\epsilon^2 n$, $U_A\subseteq A_1'$ of size at most $\epsilon^2 n$, and pair of vertices $u,v\in A_1'$ there exist at least $|C_3|n/20$ disjoint rainbow paths $u,uw,w,wv,v$ such that $w\in A_1'\setminus U_A$ and the colors of $uw$ and $wv$ belong to $C_3\setminus C_3'$. Thus we can cover the vertices in $V_A\cup V'_A$ by a $C_3$-rainbow path of length $\epsilon^2 n > 2|V_A\cup V'_A|+1$ that is spanned by $A$, in an $O(1/n^2)$-spread manner. Similarly, cover the vertices in $V_B\cup V'_B$ by a $C_3$-rainbow path of length $\epsilon^2 n$ that is spanned by $B$ (using $\epsilon^2 n$ new colors). Let $P_A$ and $P_B$ be the corresponding paths, and remove from $C_3$ all the colors used so far. 

\textbf{Step 4: Construct a {\boldmath $C_2$}-transversal matching {\boldmath $E_1$} spanned by~{\boldmath $A_1\times B_1$}.}
We build a $C_2$-transversal matching $E_1$ spanned by $(A'_1\setminus V(P_A))\times (B'_1\setminus V(P_B))$ using a random greedy way, ensuring the appropriate spreadness conditions, as done before. 

\textbf{Step 5: Construct a {\boldmath $C_1$}-rainbow linear forest {\boldmath $E_2$} spanned by {\boldmath $A_1\times B_1$}.} 
If $|C_1|<\epsilon^2 n$, then, similar to the last step, construct a $C_1$-transversal matching $E_2$ spanned by $(A'_1\setminus V(P_A))\times (B'_1\setminus V(P_B))$ such that $E_1\cup E_2$ is a matching. 
Else if $|C_1|>\epsilon^2 n$, then we will invoke \Cref{lem:bipartite} to construct a $C_1$-transversal path using edges spanned by edges in $(A'_1\setminus (V(P_A)\cup V(E_1)))\times (B'_1\setminus (V(P_B)\cup V(E_1)))$ only and let $E_2$ be its edge set.
Indeed, the $(1-\epsilon)$-bipartiteness of the graphs with color indices in $C_1$ implies (1) for the application of \Cref{lem:bipartite} (with $\epsilon = \epsilon/10$), and the properties derived in Step~3 imply the corresponding properties of (2) and (3).

\textbf{Step 6: Concatenate {\boldmath $(E_0\setminus e')\cup E_1\cup E_2 \cup E(P_A)\cup E(P_B)$} to a rainbow path~{\boldmath $P_1$}.} 
To concatenate $(E_0\setminus e')\cup E_1\cup E_2 \cup E(P_A)\cup E(P_B)$ to a rainbow path avoiding the vertices in $e' := a'b'$ (with $a'\in A_1$ and $b'\in B_1$), we need to join at most $1 + \epsilon^2 n + \epsilon^2 n + 2$ paths that span in total at most $(1-\epsilon/1000) n$ vertices from each of $A_1$ and $B_1$. This can be done as Step~5 of Case~I, using the minimum degree condition on graphs $G_c$ for $c\in C_3$ that is enforced by Step~3. Note that the path $P_1$ has one of its endpoints in $A_1$ and the other in $B_1$, call them $a_1$ and $b_1$ respectively. We will now define $A = A_1\setminus (V(P_1)\setminus \{a_1\})$ and $B = B_1\setminus (V(P_1)\setminus \{b_1\})$. By possibly extending the path $P_1$ by one more edge (in possibly both sides), one can ensure the parity condition in (4) (i.e., $|A|$ and $|B|$ are even). We can now apply \Cref{def:cleanupcliques} with $P_2 = e'$ to see that all properties (1)--(4) are satisfied. This finishes the proof of \Cref{lem:cleanupcliques}.
\end{proof}

\subsection{Exceptional families of Dirac graphs: Proof of \Cref{thm:superextremal}}
Let $\mathbb{G}, A,C,(e,i), \mathbb{G}^{-i}(A,C)$ be as in the statement of \cref{thm:superextremal}. Let $0<\epsilon\ll 1$. Let $C'$ be the set of indices $i\in C$ such that $G_i$ is  $(1-\epsilon)$-bipartite. Note that as $\mathbb{G}$ is exceptional, we have that $|A|=n/2$ and  
\begin{align*}
0.1n^2&\ge r(\mathbb{G}) \ge \sum_{i\in C\setminus C'}\left(e_{G_i}(A)+ e_{G_i}(\overline {A})\right) + \frac{1}{2}\sum_{v\in A}\sum_{i\in C'} d_{G_i}(v,A) + \frac{1}{2} \sum_{v\in \overline{A}}\sum_{i\in C'} d_{G_i}(v,\overline{A}) + \sum_{i\in \overline {C}} e_{G_i} (A,\overline{A})
\\&\ge \epsilon n^2 |C\setminus C'| + \frac{1}{2} \sum_{v\in A}\sum_{i\in C'} d_{G_i}(v,A) + \frac{1}{2} \sum_{v\in \overline{A}}\sum_{i\in C'} d_{G_i}(v,\overline{A})+ (n-|C|) \cdot \frac{n}{2}.
\end{align*}
Thus, $|C\setminus C'|\le 0.1/\epsilon$, $|C|\ge 0.8n$, and $\sum_{i\in C'} d_{G_i}(v,U)\le 0.1n^2$ for $v\in U$ and $U\in \{A,\overline{A}\}$. Now one has to construct a rainbow path $P$  that is $\mathbb{G}^{-i}(A,C)$-rainbow and uses all the colors not in $C'$ (in an $O(1/n^2)$-spread manner) for each pair $(e,j)$ such that $j\in C$. $P$ should start from one of the endpoints of $e$ and have odd length if $i\in C$ and even length otherwise. Then it suffices to apply \cref{lem:bipartite} to extend $P$ to a $\mathbb{G}^{-i}(A,C)$-rainbow Hamilton path that joins the endpoints of $e$. These steps can be done verbatim as in the proof of \Cref{thm:extremalsimple} in Case~I (starting from Step~3). 

For the second part of \cref{thm:superextremal}, let $S$ be a set of  $\min\{\zeta n^2, r(\mathbb{G})\}$ pairs $(e,i)$ such that $e\in E(G_i)$ and either $i\in C$ and $e\notin A\times \overline{A}$ or $i\in \overline{C}$ and $e\in A\times \overline{A}$. Then, one can construct a $\mathbb{G}^{-i}(A,C)$-rainbow Hamilton path $P_{(e,i)}$ that joins the endpoints of $e$ for $(e,i)\in S$ recursively, as above, always choosing edges that have not been chosen so far.

\section{Random subsets preserve the non-extremalness of a family: Proof of \Cref{lem:property preserve}} \label{section:property preserve} 

For the proof of \Cref{lem:property preserve}, we consider $3$ cases. In the first case, a constant portion of graphs in $\mathbb{G}$ are not $\alpha$-extremal. To deal with this case, we show that if $G$ is a Dirac graph that is not $\alpha$-extremal and $V'=V(p)$, then $G[V']$ is likely not $\beta$-extremal for some $0<\beta\ll \alpha$ (see \Cref{lem:property graph}). This implies that, with non-zero constant probability, a constant portion of graphs in $\mathbb{G}_{C(q)}[V(p)]$ are not $\beta$-extremal, hence the family $\mathbb{G}_{C(q)}[V(p)]$ itself is not $\alpha'$-extremal for some $0<\alpha'\ll \beta$. This case is the hardest to deal with, and most of the arguments given in this section concern it. 

For the remaining two cases, we consider the distinction whether there exist sets $A_G$ for $G\in \mathbb{G}$ such that $G$ is $\alpha$-extremal with respect to $A_G$ and most of these sets are the same up to removing/adding a small number of vertices. 

\subsection{Auxiliary results}
We start with two deterministic auxiliary lemmas about edge-minimal Dirac graphs.
\begin{lemma} \label{lem:edges Dirac} 
Let $G$ be an $n$-vertex edge-minimal Dirac graph. Then every edge of $G$ has an endpoint whose degree is at most $(n+1)/2$.
\end{lemma}
\begin{proof}
Suppose for a contradiction that $G$ has an edge $uv$ such that $d(u),d(v)> (n+1)/2$. Then, $d(u),d(v)\ge (n/2)+1$. Let $G'$ be obtained from $G$ by removing the edge $uv$. Then both $u$ and $v$ have degree at least $n/2$ in $G'$. Consequently, since $G$ is Dirac, every vertex in $G'$ has degree at least $n/2$. Thus $G'$ is Dirac. This contradicts the edge-minimality of $G$.
\end{proof}

\begin{lemma} \label{lem:complement is also sparse}
Let $0 < 1/n \ll \alpha \le 1$. Let $G$ be an $n$-vertex graph with minimum degree at least $(1/2 - \alpha^2)n$. Suppose every pair of adjacent vertices $u,v\in V(G)$ satisfies $\min\{d(u), d(v)\} \le (1/2+\alpha^2)n$. Let $A\subseteq V(G)$ be a half-set such that $e(A)\le \alpha n^2$. Then, $e(\overline{A})\le 3\alpha n^2$.
\end{lemma}
\begin{proof}
Since $G$ has minimum degree at least $(1/2 - \alpha^2)n$, we have 
\[
e(A,\overline{A})\ge (1/2 - \alpha^2)n\cdot |A| - 2e(A) \ge \left(1/4 - 2\alpha^2 - 2\alpha\right)n^2.
\]
Denote by $A'$ the set of all vertices in $\overline{A}$ that have degree at most $(1/2+\alpha^2)n$. The hypothesis implies that no edge in $G$ is spanned by $\overline{A}\setminus A'$, i.e., $e(\overline{A}\setminus A')=0$. Thus, the number of edges with at least one end-point in $\overline{A}$ is at most $(1/2+\alpha^2)n\cdot |A'| + |A|\cdot |\overline{A}\setminus A'|$. Consequently, using the above inequalities, we have 
\[
e(\overline{A})\le (1/2+\alpha^2)n\cdot |A'| + |A|\cdot |\overline{A}\setminus A'| - e(A,\overline{A}) \le (1/2+\alpha^2)n\cdot |\overline{A}| - \left(1/4 - 2\alpha^2 - 2\alpha\right)n^2 \le 3\alpha n^2.
\]
\end{proof}

We next show that the non-extremalness of an edge-minimal Dirac graph is preserved in the subgraph induced by a random vertex subset with good probability. 

\begin{lemma} [Extremality preserving lemma for a graph] \label{lem:property graph}
Let $0 < 1/n   \ll \alpha' \ll \alpha \le 1$. Let $p\in [0,1]$. 
If $G$ is an edge-minimal non $\alpha$-extremal Dirac graph on the vertex set $V$ of size $n$, then the graph induced by $V(p)$ is not $\alpha'$-extremal with probability at least $1-5(pn)^2e^{-\Omega(\alpha' pn)}$.
\end{lemma}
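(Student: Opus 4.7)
The plan is to bound, via a union bound over potential half-set ``witnesses'' in $V$, the probability that $G[V(p)]$ is $\alpha'$-extremal. Recall that $\alpha'$-extremality of $G[V(p)]$ provides a half-set $A' \subseteq V(p)$ with either $e_G(A') \leq \alpha'|V(p)|^2$ (call this Type~I) or $e_G(A', V(p)\setminus A') \leq \alpha'|V(p)|^2$ (Type~II). A Chernoff bound gives $|V(p)| = (1\pm o(1))pn$ except with probability $e^{-\Omega(pn)}$, so throughout I may range $A'$ over subsets of $V$ with $|A'|=k$ in a small window around $pn/2$ and handle the two types separately.

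For Type~I, I fix a candidate $A' \subseteq V$ of size $k \approx pn/2$ and separately estimate three probabilities: (i) $\Pr[A' \subseteq V(p)] = p^k$; (ii) given $A' \subseteq V(p)$, the probability that $|V(p)\setminus A'|$ lies in $\{k-1, k, k+1\}$, which is a lower-tail Chernoff bound for $\mathrm{Bin}(n-k,p)$ (whose mean is $\approx pn$ against a target of $\approx pn/2$) giving $\leq e^{-\Omega(pn)}$; and (iii) that $e_G(A') \leq \alpha'(pn)^2$, a deterministic condition on $A'$. The total Type~I contribution is then $N_k \cdot p^k \cdot e^{-\Omega(pn)}$ summed over the relevant $k$, where $N_k$ counts size-$k$ ``sparse'' subsets. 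The naive bound $N_k \leq \binom{n}{k}$ combined with $p^k$ yields $(2e)^{pn/2} = e^{(\log(2e)/2) pn}$, which exceeds the Chernoff gain; the crux is therefore an improved count.

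The main technical challenge, which I regard as the heart of the proof, is the following counting estimate: for a non-$\alpha$-extremal edge-minimal Dirac graph $G$, the number of subsets of size $k$ with $e_G(A') \leq \alpha'(pn)^2$ is at most $\binom{n}{k} \cdot e^{-C pn}$ for a constant $C = C(\alpha, \alpha')$ that can be made arbitrarily large as $\alpha' \ll \alpha$. I would approach this by combining three ingredients: hypergeometric concentration of $e_G(A')$ over a uniformly random size-$k$ subset (whose mean is $\gtrsim k^2/4$ by the Dirac condition); \Cref{lem:KLS general}, which provides the quasirandom lower bound $e_G(X,Y) \geq \alpha n^2/3$ for half-set pairs; and edge-minimality via \Cref{lem:complement is also sparse}, which symmetrizes the roles of $A$ and $\overline{A}$. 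The difficulty is that ``atypical'' sparse $A'$'s might concentrate near an extremal-like structure of $G$ (e.g., close to one side of an approximate bipartition, or inside a near-clique), and non-extremality is used to rule these out. A case analysis separating the bipartite-like and clique-like scenarios, followed by a peeling argument to upgrade the half-set statement of \Cref{lem:KLS general} to arbitrary size $k$ (using edge-minimality to propagate edge lower bounds across complementary parts), should suffice.

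For Type~II the analysis is more direct: conditional on $A' \subseteq V(p)$, the cross-edge count $e_{G[V(p)]}(A', V(p)\setminus A') = \sum_{v\in V\setminus A'} e_G(A',v)\mathbb{1}[v \in V(p)]$ is a sum of independent bounded random variables with mean $p \cdot e_G(A', V\setminus A')$. The Dirac condition forces $e_G(A', V\setminus A') \geq |A'|(n/2 - |A'|) = \Omega(pn^2)$, so the mean is $\Omega((pn)^2)$; a Bernstein-type inequality then bounds the probability of the sum being below $\alpha'(pn)^2$ by $e^{-\Omega(pn)}$ for each fixed $A'$. Combining the Type~I and Type~II bounds, summing over the $O(1)$ relevant values of $k$, and absorbing the Chernoff contribution from $|V(p)|$ yields the claimed total failure probability of $e^{-\Omega(\alpha'^2 pn)}$.
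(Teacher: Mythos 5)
Your approach (a union bound over all $k$-subsets $A'\subseteq V$ as potential extremality witnesses) is fundamentally different from the paper's, and the union bound does not close. With $k\approx pn/2$, the cost is $\binom{n}{k}p^k\approx(2e)^k\approx e^{0.85\,pn}$, while the per-witness probabilities you buy back are much smaller. For Type~II the gap is not even noticed: conditional on $A'\subseteq V(p)$, the cross-edge sum $\sum_{v\notin A'}d_{A'}(v)\mathbf{1}[v\in V(p)]$ has mean $\approx(pn)^2/4$ and summands bounded by $|A'|=k\approx pn/2$, so a Bernstein lower tail gives $e^{-\Theta(\mu/k)}=e^{-cpn}$ with $c\approx 1/4$; even multiplied by the $e^{-pn/8}$ factor for $|V(p)\setminus A'|\approx k$, the total exponent stays positive. ``Summing over $O(1)$ values of $k$'' glosses over the exponentially many $A'$ per value of $k$. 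For Type~I you correctly see that an improved count $N_k\leq\binom{n}{k}e^{-Cpn}$ is needed, but the claim that $C$ can be made arbitrarily large as $\alpha'\ll\alpha$ is not established, and standard hypergeometric/Azuma concentration for $e_G(A')$ over a random $k$-subset (Lipschitz constant $k$ per coordinate) gives $C$ on the order of $1/64$, far below $0.85$. Moreover \Cref{lem:KLS general} and \Cref{lem:complement is also sparse} are statements about half-sets of size $\approx n/2$, while your witnesses have size $\approx pn/2\ll n$; there is no routine ``peeling'' that transfers quasirandomness from the half-set scale down to the sampling scale, and this is exactly where the difficulty lives.

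The paper avoids the union bound altogether by replacing the global, set-level extremality property with a local, pairwise one. The key structural input (\Cref{clm:codegree}) is that an edge-minimal non-$\alpha$-extremal Dirac graph either has $\geq\delta^{1/2}n$ vertices of degree $\geq(1/2+\delta)n$, so its edge density exceeds $1/4$ by a constant — a density obstruction to extremality that $G[V(p)]$ trivially inherits by Chernoff — or has at least $\delta n^2$ vertex pairs $\{u,v\}$ with ``mid-range'' codegree $d_G(u,v)\in[\delta n,(1/2-\delta)n]$. Survival of such a pair into $V(p)$ happens with probability $p^2$, and the sampled codegree concentrates around $pd_G(u,v)$; these are local, single-vertex/single-pair Chernoff facts carrying no exponential union-bound cost. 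A final claim then shows that a graph with appropriate degree bounds and $\Omega((pn)^2)$ mid-range-codegree pairs cannot be $\alpha'$-extremal. Finding this local reformulation of non-extremality, rather than enumerating witnesses, is the idea your proposal is missing.
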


\begin{proof}
Let $\delta$ be such that $0 < 1/n \ll   \alpha' \ll \delta \ll \alpha \le 1$. We consider two cases based on the number of vertices of $G$ with degree significantly larger than $n/2$.

\textbf{Case 1. {\boldmath $G$} has at least {\boldmath $\delta^{1/2} n$} vertices with degree at least {\boldmath $(1/2 + \delta) n$}.} 
The Chernoff bound (\Cref{chernoff}) gives that $|V(p)-pn|\le \delta^2 pn$ with probability $1-e^{-\Omega(\alpha' pn)}$. Thereafter the probability that there is a vertex in $V(p)$ of degree less than $(1/2-\delta^2)np$ is at most
\[
\sum_{v\in V}\Pr(v\in V(p)) \Pr( |N(v)\cap V(p)| <  (1/2-\delta^2)np) \le np \Pr(Bin(n/2,p) <  (1/2-\delta^2)np) \le np e^{-\Omega(\alpha' pn)}, 
\]
by the Chernoff bound (\Cref{chernoff}). Similarly, with probability $1-npe^{-\Omega(\alpha' pn)}$, at least  $8\delta^{1/2} n p/9$ of the vertices with degree at least $(1/2 + \delta) n$ belong to $V(p)$, and those vertices have degree at least$(1/2 + 8\delta/9) np$ in $G[V(p)]$. Thus, with probability $1-npe^{-\Omega(\alpha' pn)}$, $G[V(p)]$ has at most   $(1+\delta^2)pn$ vertices and at least 
\[((1-\delta^2)np \cdot (1/2 + \delta/2) np + (8\delta^{1/2} n p/9)\cdot  (8\delta/9) np)/2\ge (1/4+\delta^{3/2}/2)(np)^2\] edges. 
In such an event, we have that $G' = G[V(p)]$ is not $\alpha'$-extremal. Indeed, if we let $A$ be a half-set of $G'$, then 
\[
e_{G'}(A,\overline{A}) \ge e(G') - (e_{G'}(A) + e_{G'}(\overline{A})) \ge (1/4+\delta^{3/2}/4)(pn)^2 - 2\binom{\lceil (1+\delta^2)pn/2 \rceil}{2} > \alpha' |V(G')|^2
\]
\[
\text{and} \;\; e_{G'}(A) + e_{G'}(\overline{A}) \ge e(G') - e_{G'}(A,B) \ge (1/4+\delta^{3/2}/4)(pn)^2 - ((1+\delta^2)pn)^2/4 > 4\alpha' |V(G')|^2,
\]
thus by \Cref{lem:complement is also sparse}, we have $e_{G'}(A) > \alpha' |V(G')|^2$.

\textbf{Case 2. {\boldmath $G$} has at most {\boldmath $\delta^{1/2} n$} vertices with degree at least {\boldmath $(1/2 + \delta) n$}.} 
\begin{claim}\label{clm:codegree}
There are at least $\delta n^2$ pairs of vertices $u,v$ in $G$ with co-degree satisfying ${\delta n \le d(u,v) \le (1/2 -\delta) n}$.
\end{claim}
\begin{proof}
Suppose for a contradiction that at most $\delta n^2$ pairs of vertices $u,v$ in $G$ satisfy ${\delta n \le d(u,v) \le (1/2 -\delta) n}$. Let $E_1=\{uv \in E(G) :d(u,v)> (1/2 -\delta) n \}$ and $E_2=\{uv\in E(G):d(u,v)<\delta n\}$. It follows from our assumption that $|E(G)\setminus (E_1\cup E_2)|\le \delta n^2$. Therefore,  either $|E_1|\ge 0.1n^2$ or $|E_2|\ge 0.1n^2$. We thus consider the two corresponding cases.

\textbf{Case I: {\boldmath $|E_1|\ge 0.1n^2$}.} 
Let $E_1'$ be the subset of edges of $E_1$ that are not incident to a vertex of degree larger than $(1/2+\delta)n$. Then $|E_1'|\ge 0.1n^2-\delta^{1/2}n^2\ge 0.09n^2$. In addition, since $G$ is edge-minimal, every edge in $E_1'$ is incident to a vertex of degree $\lceil n/2 \rceil$ (see \cref{lem:edges Dirac}). Thus there exists a vertex $v$ of degree $\lceil n/2 \rceil$ and a set of vertices $U\subset N(v)$ of degree at most $(1/2+\delta)n$ such that $|U|=0.09n$ and $d(u,v)>(1/2-\delta)n$ for every $u\in U$.

Let $A=N(v)$ and $A_{good}\subseteq A$ be the set of vertices with at least $0.04n$ neighbors in $U$. Let ${A_{bad}= A\setminus A_{good}}$. Observe that each vertex $u\in U$ has $d(u,v)> (1/2-\delta)n$ neighbors in $A$ and $|A|=\lceil n/2 \rceil$. Thus, by double counting the non-edges from $U$ to $A_{bad}$ in $G$, we have that
\[
0.5|A_{bad}| \cdot (0.09-0.04)n \le \delta n \cdot |U| = 0.09 \delta n^2.
\]
Hence, $|A_{bad}|\le 4\delta n$ and $|A_{good}|\ge n/2 - 4\delta n$. It follows that every pair of vertices $a\in A_{good}, u\in U$ has at least $0.04n-|A\setminus N(u)|\ge 0.04n-\delta n-1>\delta n$ common neighbors in $U\subseteq A$. So, no edge from $A_{good}$ to $U$ belongs to $E_2$ and at most $\delta n^2$ of those edges belong to $E(G)\setminus (E_1\cup E_2)$.
Hence, at least $|A_{good}| - \delta n^2/(0.04 n) \ge (1/2-\delta^{1/2})n$ vertices  $a\in A_{good}$ are incident to a vertex $u\in U$ via an edge in $E_1$. For each such pair $a\in A_{good}$, $u\in U$, the vertex $u$ has at most $(1/2+\delta)n-d(v,u)\le 2\delta n$ neighbors not in $A$ and therefore the number of neighbors of $a\in A_{good}$ in $A$ is at least $d(a,u)-2\delta n\ge (1/2-\delta)n-2\delta n\ge (1/2-3\delta)n$. 
It follows that $A$ spans at least $(1/2-3\delta)(1/2-\delta^{1/2})n/2\ge (1/8-4\delta^{1/2})n^2$ edges. Finally, since  $G$ has at most $\delta^{1/2} n$ vertices with degree at least $(1/2 + \delta) n$, we have
\[
e(A,\overline{A})\le \delta^{1/2} n^2 +  \left(|A|(1/2 + \delta) n-2 \left(1/8-4\delta^{1/2}\right)n^2 \right)< \alpha n^2.
\]
This contradicts the assumption that $G$ is not $\alpha$-extremal.

\textbf{Case II: {\boldmath $|E_2|\ge 0.1n^2$}.} 
The analysis of this case is similar to the analysis of Case~I. In Case~II, there exists a vertex $v$ of degree $\lceil n/2 \rceil$ and a set of vertices $U\subset N(v)$ of degree at most $(1/2+\delta)n$ such that $|U|=0.09n$ and $d(u,v)<\delta n$ for every $u\in U$. Let $A=\overline{N(v)}$ and $A_{good}\subseteq A$ be the set of vertices with at least $0.04n$ neighbors in $U\subset N(v)$, and $A_{bad}= A\setminus A_{good}$. Now every vertex in $U$ has fewer than $\delta n$ neighbors in common with $v$, thus it has at least $(1/2-\delta)n$ neighbors in $A$. By double-counting the non-edges from $U$ to $A_{bad}$, one has $|A_{bad}|\le \delta n|U|/(|U|-0.04n)\le 4\delta n$. Now, since $|E(G)\setminus (E_1\cup E_2)|\le \delta n^2$, each vertex in $U$ has fewer than $\delta n$ neighbors in common with $v$ (thus in $U$), every vertex in $A_{good}$ has at least $0.04n$ neighbors in $U$, every pair of vertices $a\in A_{good}, u\in U$ with $d(a)\le (1/2 + \delta)n$ has at most $d(a) - (d(a,U) - d(u,U)) \le (1/2 + \delta)n - (0.04n - \delta n) < (1/2 - \delta)n$ common neighbors, so such pairs $au$ do not belong to $E_1$. Additionally, since $|A_{good}|\ge \lfloor n/2 \rfloor - 4\delta n$ and $G$ has at most $\delta^{1/2} n$ vertices with degree at least $(1/2 + \delta) n$, we have that at least $\lfloor n/2 \rfloor - 4\delta n - \delta^{1/2}n - \delta n^2/(0.04n) \ge(1/2-2\delta^{1/2})n$ vertices  $a\in A_{good}\subseteq A$ are incident to a vertex $u\in U$ of degree at most $(1/2+\delta)n$ via an edge in $E_2$. Each such vertex $a$ has at most $\delta n$ common neighbors with $u\in U$, thus at most $\delta n + (|A| - d(u,A)) \le 2\delta n$ neighbors in $A$. It follows that $A$ spans at most $2\delta^{1/2} n^2 + 2\delta n^2 < \alpha n^2$ edges, which contradicts the assumption that $G$ is not $\alpha$-extremal.
\end{proof}

Using \Cref{clm:codegree} and the Chernoff bound, we next deduce that $G[V(p)]$ is not $\alpha'$-extremal with the desired probability. Denote by $P$ the set of pairs of vertices satisfying the assertion of \Cref{clm:codegree}. 
Now consider a $p$-random subset $V(p)$ of the vertex set $V$ of $G$. An application of the  Chernoff bound gives the following with error probability $5(pn)^2e^{-\Omega(\alpha' pn)}$. 
\begin{enumerate}
    \item $||V(p)|-pn|\le \delta^3 p n$. \label{eq:size of X}
    \item $G[V(p)]$ has minimum degree at least $(1/2-\delta^3)pn$. 
    \item $|d_{G[V(p)]}(u)-pd_G(u)| \le \delta^3 pn$ for every $u\in V(p)$. In particular, since $G$ is an edge-minimal Dirac graph, if $uv\in E(G[V(p)])$, then $\min\{d_{G[V(p)]}(u),d_{G[V(p)]}(v)\} \le (1/2+\delta^3)pn$.  
    \item The number of pairs in $P$ contained in $V(p)$ is at least $0.9 \delta (pn)^2$ (here we are using that for $u\in V(p)$ if $u$ belongs to at least $\delta^2 n$ pairs in $P$ then the number of vertices $v \in V(p)$ such that $(u,v)\in P$ is concentrated by the Chernoff bound). Denote the set of such pairs by~$P'$.
    \item $|d_{G[V(p)]}(u,v)- p d_G(u,v)|\le \delta^3 pn/4$ for every pair $\{u,v\}\in P'$. 
\end{enumerate}
Indeed, each of the above fails with probability at most $(pn)^2e^{-\Omega(\alpha' pn)}$. For example, the probability that (5) fails, i.e.,  there exists a pair $\{u,v\}\in P'$ such that  $|d_{G[V(p)]}(u,v)- p d_G(u,v)|\le \delta pn/4$ is at most
\begin{align*}
&\sum_{\{u,v\}\in P}\Pr(u,v\in V(P) \text{ and } ||V(P)\cap N(u)\cap N(v)|-p d_G(u,v)|> \delta pn/4)
\\&\le \sum_{\{u,v\}\in P} p^2 \Pr( |Bin(d_G(u,v),p)-p d_G(u,v)|> \delta pn/4)
\le (np)^2 e^{-\Omega(\alpha' pn)},
\end{align*}
where the last inequality follows from the Chernoff bound. 

In the event that $G[V(p)]$ satisfies the above properties (1)-(5), it follows that it is not $\alpha'$-extremal by the following claim. 
\begin{claim}
Let $0 < 1/n \ll \alpha' \ll \delta \ll 1$. Let $G$ be an $n$-vertex graph with the following properties.
\begin{enumerate}
    \item \label{eq:min degree} $G$ has minimum degree at least $(1/2- 2\delta^3)n$.
    \item \label{eq:max degree} For every pair of adjacent vertices $u,v\in V(G)$, we have $\min\{d(u), d(v)\} \le (1/2+2\delta^3)n$.
    \item \label{eq:co degree} There are $\delta n^2/2$ pairs of vertices $u,v$ with co-degree satisfying $\delta n/2\le d(u,v)\le (1/2 - \delta/2)n$.
\end{enumerate}
Then, $G$ is not $\alpha'$-extremal.
\end{claim}
\begin{proof}
We will prove a contrapositive as follows. For that suppose that $G$ is an $n$-vertex $\alpha'$-extremal graph satisfying \eqref{eq:min degree} and \eqref{eq:max degree}. It suffices to prove that $G$ must violate \eqref{eq:co degree}.

Since $G$ is $\alpha'$-extremal graph there exist some half-set $A$ so that $\min\{e(A),e(A,\overline{A})\}\le \alpha' n^2$. 

\textbf{Case I: {\boldmath $e(A,\overline{A}) \le \alpha' n^2$}.} 
By \eqref{eq:min degree}, we know 
\begin{align} \label{eq:lower bound on twice of A}
2e(A)\ge (1/2- 2\delta^3)n \cdot |A| - e(A,\overline{A}) \ge (1/4-\alpha')n^2 - \alpha' n^2 \ge (1/4-2\alpha')n^2.
\end{align}
Denote by $S_1, S_2$ the set of all vertices $v\in A$ satisfying $d_{\overline{A}}(v)\ge \alpha'^{1/3} n$ and $d_A(v)\le (1/2 - \alpha'^{1/3}) n$, respectively. Similarly, denote by $S'_1, S'_2$ the set of all vertices $v\in \overline{A}$ satisfying $d_{A}(v)\ge \alpha'^{1/3} n$ and $d_{\overline{A}}(v)\le (1/2 - \alpha'^{1/3}) n$, respectively.
 
We claim that each of $S_1, S_2, S'_1, S'_2$ has size at most $\alpha'^{1/3} n$. First assume that $|S_1| > \alpha'^{1/3} n$, then 
\[
e(A,\overline{A}) \ge \sum_{v\in S_1} d_{\overline{A}}(v) \ge |S_1|\cdot \alpha'^{1/3} n \ge \alpha'^{2/3} n^2,
\]
a contradiction. Similarly, if $|S_2| > \alpha'^{1/3} n$, then 
\[
2e(A)\le n^2/4 - \sum_{v\in S_2} (|A| - 1 - d_{A}(v)) \le n^2/4 - |S_2|\cdot \alpha'^{1/2} n \le (1/4- \alpha'^{5/6} )n^2,
\]
a contradiction to \eqref{eq:lower bound on twice of A}. Similarly, we can bound the sizes of $S_2$ and $S'_2$ as desired. Now note that every pair of vertices $\{u,v\}$ such that $\delta n/2\le d(u,v)\le (1/2 - \delta/2)n$  must intersect $S_1\cup S_2\cup S_1'\cup S_2'$. Indeed, let $\{u,v\}$ be such a pair, and without loss of generality, we may assume that they have at least $\delta n/4$ common neighbors in $A$. If $u\in \overline{A}$, then $u\in S_1'$. Similarly if $v\in \overline{A}$, then $v\in S_1'$. Thus we may assume that both $u,v$ belong to $A$. Then, as $|N(u)\cup N(v)|\ge 2(1/2- 2\delta^3)n- (1/2 - \delta/2)n \ge (1/2+\delta/4)n$, we have that one of $u,v$ has at least $\delta n/4\ge {\alpha'}^{1/3}n$ neighbors in $\overline{A}$ and therefore belongs to $S_1$. It follows that there are at most $n\cdot 4\alpha'^{1/3}n<\delta n^2/2$ pairs $\{u, v\}$ such that $\delta n/2\le d(u,v)\le (1/2 - \delta/2)n$.

\textbf{Case II: {\boldmath $e(A) \le \alpha' n^2$}.} 
\Cref{lem:complement is also sparse} gives that $e(\overline{A}) \le 3\alpha' n^2$. Let $S$ ($S'$ respectively) be the set of vertices in $A$ (respectively, in $\overline{A}$) with at least $\delta n/3$ neighbors in $A$ (respectively, in $\overline{A}$). Then $|S|,|S'|\le \frac{6\alpha' n^2}{\delta n/3}< \delta^2 n$. Moreover, (1) gives that every vertex in $A\setminus S$ (respectively, $\overline{A}\setminus S'$) has at least $(1/2-0.1\delta)n$ neighbors in $\overline{A}$ (respectively, in $A$). Therefore every pair of vertices $\{u,v\}$ such that $\delta n/2\le d(u,v)\le (1/2 - \delta/2)n$  must intersect $S\cup S'$, hence there are less than $\delta n^2/2$ such pairs.
\end{proof}
This finishes the proof of \Cref{lem:property graph}.
\end{proof}

Two $\alpha$-extremal graphs $G, G'$ on the same vertex set are said to be \textit{$\delta$-good} if there exists a pair of half-sets $A_G, A_{G'}$ such that $G$ and $G'$ are $\alpha$-extremal with respect to $A_G$ and $A_G'$ respectively and $\delta n \le |A_G \cap A_{G'}|\le (1/2 - \delta) n$. We often say that $G, G'$ is $\delta$-good with respect to $A_G, A_{G'}$. In the proof of \Cref{lem:property preserve}, we will use the following lemma for pairs of $\delta$-good $\alpha$-extremal graphs.

\begin{lemma} \label{lem: reduction to pseudo}
Let $0 < 1/n \ll \alpha \ll \delta < 1$. Let $G_1, G_2$ be a pair of $\delta$-good $\alpha$-extremal graphs on $n$ vertices with minimum degree at least $(1/2-\alpha)n$. Then, for every half-set $A$, we have 
\[
\min \set{e_{G_1}(A),e_{G_1}(A,\overline{A})}\ge \alpha n^2 \;\;\; \text{or} \;\;\; \min \set{e_{G_2}(A),e_{G_2}(A,\overline{A})}\ge \alpha n^2.
\] 
\end{lemma}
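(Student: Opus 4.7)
The plan is to argue by contradiction directly from the two definitions involved, with essentially no additional combinatorial input. The hypothesis of $\delta$-goodness severely restricts which half-sets can simultaneously witness extremality of $G_1$ and $G_2$; in particular, it forbids a single half-set from serving as a witness for both graphs at the same time, and this is exactly what the conclusion needs to rule out.

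Concretely, suppose for contradiction that there exists a half-set $A$ for which
\[
\min\{e_{G_1}(A), e_{G_1}(A,\overline{A})\} < \alpha' n^2 \quad \text{and} \quad \min\{e_{G_2}(A), e_{G_2}(A,\overline{A})\} < \alpha' n^2.
\]
Since $\alpha' \ll \alpha$, the displayed inequalities yield $\min\{e_{G_i}(A), e_{G_i}(A,\overline{A})\} \leq \alpha n^2$ for $i \in \{1,2\}$, which is precisely the statement that $G_1$ and $G_2$ are both $\alpha$-extremal with respect to $A$ in the sense of \cref{def:extremal graph}. Therefore, taking $A_{G_1} := A$ and $A_{G_2} := A$ gives a valid pair of witness sets appearing in the definition of $\delta$-goodness, so we must have $|A_{G_1} \cap A_{G_2}| \leq (1/2-\delta)n$.

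On the other hand, $|A_{G_1} \cap A_{G_2}| = |A \cap A| = |A| \geq (n-1)/2$, and the hierarchy $1/n \ll \alpha \ll \delta$ ensures $\delta n > 1/2$, hence $(n-1)/2 > (1/2-\delta) n$. This contradicts the upper bound just derived, completing the proof.

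The main (and essentially only) obstacle is recognizing that the implication is immediate once one unpacks the two definitions carefully: the minimum degree assumption of the lemma never enters the argument (it is presumably stated because it is automatic in the context where the lemma is applied), and no rearrangement of edges or double counting is required beyond the observation $|A \cap A| = |A|$. The hierarchy of constants is used only through the trivial consequences $\alpha' < \alpha$ and $\delta n > 1/2$.
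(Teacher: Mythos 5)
Your proof is correct, and it takes a genuinely different and more economical route than the paper's. The paper picks fixed witness sets $A_1,A_2$ for $G_1,G_2$, shows that any half-set $A$ must overlap one of $A_1,A_2$ in a middle range $[\Omega(\delta n),(1/2-\Omega(\delta))n]$, and then invokes the minimum degree hypothesis together with \cref{lem:complement is also sparse} to lower-bound both $e_{G_i}(A)$ and $e_{G_i}(A,\overline{A})$ for that $i$. You instead observe that if neither graph satisfied the conclusion, then the very same half-set $A$ would witness $\alpha$-extremality of both $G_1$ and $G_2$ simultaneously; since the definition of $\delta$-goodness quantifies over \emph{all} witness pairs, taking $A_{G_1}=A_{G_2}=A$ forces $|A|\le(1/2-\delta)n$, which is impossible for a half-set once $\delta n>1/2$. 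This is an immediate consequence of the two definitions, it makes no use of the minimum degree hypothesis or of \cref{lem:complement is also sparse}, and it avoids entirely the intermediate claim about $|A\cap A_i|$ lying in a middle range. Your observation that the degree hypothesis is never used is accurate. One cosmetic point: you should state explicitly that $A$ being a witness for $\alpha$-extremality requires $\min\{e_{G_i}(A),e_{G_i}(A,\overline A)\}\le\alpha n^2$, which your strict inequality $<\alpha' n^2<\alpha n^2$ certainly gives, so no issue there; the argument stands as written.
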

\begin{proof}
Let $A_1,A_2$ be such that $G_1,G_2$ is $\delta$-good with respect to $A_1,A_2$. Then, $|A_1\cup A_2|\ge (1/2+\delta)n$ and $|A_1\cap A_2|\le (1/2-\delta)n$. In particular, for every half-set $A$, we have $\delta n/2\le |A\cap A_1|\le (1/2-\delta/2)n$ or $\delta n/2\le |A\cap A_2|\le (1/2-\delta/2)n$. 

Without loss of generality, assume that $\delta n/2\le |A\cap A_1|\le (1/2-\delta/2)n$. Since $G_1$ is $\alpha$-extemal with respect to $A_1$, either $e_{G_1}(A_1)\le \alpha n^2$  or  $e_{G_1}(A_1,\overline{A_1})\le \alpha n^2$. If $e_{G_1}(A_1)\le \alpha n^2$, then for $B\in \{A,\overline{A}\}$, $|\overline{A_1}\setminus B|\le (1/2-\delta/2)n$ and therefore,
\begin{align*}
e_{G_1}(A\cap A_1, B\cap \overline{A_1}) 
&\ge \sum_{v\in A\cap A_1} (d_{G_1}(v)-|\overline{A_1}\setminus B|)-2e_{G_1}(A_1) 
\\&\ge |A\cap A_1|\left((1/2-\alpha)n- (1/2-\delta)n \right) - 2\alpha n^2 \ge \delta^2n^2/4 - 2\alpha n^2 >2\alpha n^2. 
\end{align*}
Thus, $\min \set{e_{G_1}(A),e_{G_1}(A,\overline{A})}\ge \min \set{e_{G_1}(A\cap A_1, A\cap \overline{A_1}),e_{G_1}(A \cap A_1,\overline{A}\cap \overline{A_1})} \ge \alpha n^2$. Similarly, if  $e_{G_1}(A_1,\overline{A_1})\le \alpha n^2$, then for $B\in \{A,\overline{A}\}$,
\begin{equation*}
e_{G_1}(A\cap A_1, B\cap A_1) 
\ge \sum_{v\in A\cap A_1} (d_{G_1}(v)-|A_1\setminus B|)-e_{G_1}(A_1,\overline{A_1}) >2\alpha n^2. 
\end{equation*}
Thus, $\min \set{e_{G_1}(A),e_{G_1}(A,\overline{A})}\ge \alpha n^2$.
\end{proof}

The next lemma states that the goodness of a pair is likely to be preserved under vertex sparsification.
\begin{lemma} \label{lem: delta good pairs is reserved}
Let $0 < 1/n \ll  \alpha \ll \delta < 1$. Let $G_1, G_2$ be a $\delta$-good pair of  $\alpha$-extremal graphs on the vertex set $V$ of size $n$ vertices with minimum degree at least $(1/2-\alpha)n$. Let $ p \in [0,1]$.  Then,  $G_1[V(p)], G_2[V(p)]$ is a $\delta/2$-good pair of $(\alpha/10)$-extremal graphs with minimum degree at least $(1/2-\alpha/10)np$ with probability at least $1-10npe^{-\Omega(\alpha np)}$.
\end{lemma}
\begin{proof}
Let $A_1,A_2$ be such that $G_1,G_2$ is $\delta$-good with respect to $A_1,A_2$. An application of the Chernoff bound gives that the following hold with probability at least $1-10npe^{-\Omega(\alpha np)}$.
\begin{itemize}
    \item[(1)] $G_i[V(p)]$ has minimum degree at least $(1/2-\alpha/3)np$ for $i=1,2$;
    \item[(2)] $||V(p)|-np|\le \alpha np/ 1000$;
    \item[(3)] $||V(p)\cap A_i|-np/2|\le \alpha np/ 1000$ for $i=1,2$;
    \item[(4)] $ 2\delta np/3 \le |(A_1\cap V(p)) \cap (A_2\cap V(p))|\le (1/2- 2\delta/3)np$; and
    \item[(5)] $||N_{G_i}(v)\cap W|p - |N_{G_i}(v)\cap W\cap V(p)| \le \alpha np/100 $ for $v\in V(p)$, $W\in \{A,\overline{A}\}$ and $i=1,2$. In particular,
\begin{itemize}
     \item[(5i)] $ |e_{G_i}(A_i)p^2 - e_{G_i[V(p)]}(A_i\cap V(p))| \le \alpha n^2p^2/10 $ for $i=1,2$;
    \item[(5ii)] $ |e_{G_i}(A_i,\overline{A_i})p^2 - e_{G_i[V(p)]}(A_i\cap V(p), \overline{A_i}\cap V(p) )| \le \alpha n^2p^2/10$ for $i=1,2$; and
\end{itemize}  
\end{itemize}   

Suppose that (1)-(5) hold. Let $i\in \{1,2\}$. (1) and (2) imply that $G_i[V(p)]$ has minimum degree at least  $(1/2-\alpha/10)|V(p)|$. Thereafter (2) and (3) imply that $A_i\cap V(p)$ can be turned to a half-set of $B_i$ of $G_i[V(p)]$ by adding or removing at most $\alpha np/ 500$ vertices. Thus, (2), and (5) imply that $G_i[V(p)]$ is $(\alpha/10)$-extremal with respect to $B_i$. Finally (4) implies that  $ 2\delta np/3 -2\alpha np/ 500 \le |B_1\cap B_2| \le (1/2- 2\delta/3)np+2\alpha np/500$. Therefore, $G_1[V(p)], G_2[V(p)]$ is a $\delta/2$-good pair with respect to $B_1,B_2$.
\end{proof}

\subsection{Putting everything together}

\begin{proof}[{\bf Proof of \Cref{lem:property preserve}}]
Let $\delta$ be such that $0 < 1/n,1/m \ll \alpha' \ll \delta \ll \alpha \le 1$. Let $C'\sim C(q)$ and $V'\sim V(p)$. Consider the new family $\mathbb{G}':= \mathbb{G}_{C'}[V']$. We split into three cases. 

\textbf{Case I: There are at least {\boldmath $\delta m$} graphs in {\boldmath $\mathbb{G}$} that are not {\boldmath $\delta$}-extremal.} 
Denote the index set of these non $\delta$-extremal graphs by $\tilde{C}\subseteq C$. Remove a number of elements from $\tilde{C}$ so that $|\tilde{C}|=\delta m$. Also, let $\delta'$ be such that $\alpha'\ll \delta'  \ll \delta$.

By the Chernoff bound, with probability $1-e^{-\Omega(\delta qm )}$, we have $||C'|-qm|\le 0.25qm$ and $||C'\cap \tilde{C}|-\delta q m|\le 0.25\delta qm$. Then, by \Cref{lem:property graph} we have that for each $G\in \mathbb{G}_{\tilde{C}}$ the graph $G[V']$ is not $\delta'$-extremal with probability $1- 5pn e^{-\Omega(-\delta' pn)}$. In the event that the above hold, for every half-set $A$ of $V'$,
\begin{align*} 
   \sum_{G\in \mathbb{G}'} \min\{ e_{G}(A), e_{G}(A,\overline{A}) \} 
   & \ge \sum_{i\in C'\cap \tilde{C}} \min\{ e_{G_i}(A), e_{G_i}(A,\overline{A}) \}   
   \\&  \ge  0.75 \delta q m \cdot \delta' |V'|^2 \ge (0.5 \delta \delta')|C'||V'|^2 \ge \alpha' |C'| |V'|^2.
\end{align*}
This proves that $\mathbb{G}'$ is non $\alpha'$-extremal with probability at least
\[
1-e^{-\Omega(\delta qm )} - (1.25\delta qm)(5(pn)^2)e^{-\Omega(\delta' pn)} \ge 1-e^{-\Omega(\alpha' qm )} - \alpha (qm)(pn)^2 e^{-\Omega(\alpha' pn)},
\]
as desired.

\textbf{Case II: There are at least {\boldmath $\delta m^2$} many {\boldmath $\delta$}-good pairs of {\boldmath $\delta$}-extremal graphs.} 
Then there are at least $0.5 \delta m$ graphs that belong to $0.5\delta m$ such pairs (call these graphs \textit{useful}). An application of the Chernoff bound gives that with probability at least $1-e^{-\Omega(\delta mq)}$, there will be at least $q\cdot 0.4 \delta m$ useful graphs $G$ in $\mathbb{G}_{C'}$. A second application of the Chernoff bound gives that for each useful graph $G$, conditioned on $G\in \mathbb{G}_{C'}$, with probability at least $1-e^{-\Omega(\delta mq)}$ there exists $q\cdot 0.4 \delta m$ graphs $H$ with the property that $G,H$ is a good pair and $G,H\in \mathbb{G}_{C'}$.
Let $\mathcal{S}$ be a maximal set of pairwise disjoint $\delta$-good pairs in $\mathbb{G}$ that belong to the family $\mathbb{G}_{C'}$ (as proper subsets). By the above we have that $|\mathcal{S}|$ has size at least $0.2 \delta qm$ with probability at least $1-e^{-\Omega(\delta mq)}$. Remove some pairs from $\mathcal{S}$ so that it has size exactly $0.2\delta qm$. Then,  \Cref{lem: delta good pairs is reserved} implies that with probability $1-(0.2\delta qm)(10np)e^{-\Omega(\alpha np)}$, for every pair $G,H$ in $\mathcal{S}$ we have that $G[V'], H[V']$ is a $\delta/2$-good pair of $(\alpha/10)$-extremal graphs with minimum degree at least $(1/2-\alpha/10)|V'|$.
In such an event,  \Cref{lem: reduction to pseudo} gives that for every half-set $A$ of $V'$, and pair $\{G,H\}\in \mathcal{S}$ we have that $\sum_{W\in\{G,H\}} \min \set{e_{W}(A),e_{W}(A,\overline{A})}\ge \delta'|V'|^2$. In extension, we have that 
\[
\sum_{G\in\mathbb{G'}} \min \set{e_{G}(A),e_{G}(A,\overline{A})}\ge \delta'|V(p)|^2||\mathcal{S}|\ge 0.2\delta \delta' qm |V'|^2 
\]
In the event $|C'|\le 2qm$ (this occurs with probability at least $1-e^{-\Omega(qm)}$, by the Chernoff bound), the right-hand side is larger than $\alpha' |C'||V'|^2$. Thus $\mathcal{G'}$ is not $\alpha'$-extremal with probability at least $1- e^{-\Omega(\delta mq)} -  2(\delta qm)(np)e^{-\Omega(\alpha np)} \ge  1-e^{-\Omega(\alpha' qm )} - \alpha (qm)(np) e^{-\Omega(\alpha' pn)}$. 

\textbf{Case III: There are at least {\boldmath $(1-\delta) m$} graphs in {\boldmath $\mathbb{G}$} that are {\boldmath $\delta$}-extremal and there are less than {\boldmath $\delta m^2$} many {\boldmath $\delta$}-good pairs of {\boldmath $\delta$}-extremal graphs.} 
For every $\delta$-extremal graph $G\in \mathbb{G}$, we fix a set $A_G$ such that $G$ is $\delta$-extremal with respect to $A_G$. There must be a graph $G'\in \mathbb{G}$ and a subfamily $\tilde{\mathbb{G}}$ of at least $(1-2\delta)m$ many $\delta$-extremal graphs such that every graph $G\in \tilde{\mathbb{G}}$ satisfies $|A_G\cap A_{G'}|\ge (1/2 - \delta)n$ or $|A_G\cap A_{G'}|\le \delta n$.
In both cases, one can transform the bipartition $A_G,\overline{A_G}$ into $A_{G'},\overline{A_{G'}}$ by swapping at most $\delta n$ pairs of vertices. Each such swap increases $\min\set{e_{G}(A), e_{G}(A,\overline{A})}$ (with respect to the current set $A$) by at most $2n$. Hence, each graph in $\tilde{\mathbb{G}}$ is $3\delta$-extremal with respect to $A_G$. Thus, by setting $A=A_G$, we have 
\begin{align*}
\sum_{G\in \mathbb{G}} \min\set{e_{G}(A), e_{G}(A,\overline{A})} &= \sum_{G\in \mathbb{G}\setminus \tilde{\mathbb{G}}} \min\set{e_{G}(A), e_{G}(A,\overline{A})} + \sum_{G\in \tilde{\mathbb{G}}} \min\set{e_{G}(A), e_{G}(A,\overline{A})} \\
&\le 2\delta m \cdot n^2 + m \cdot 3\delta n^2 \le \alpha m n^2,
\end{align*}
which contradicts the fact that $\mathbb{G}$ is not $\alpha$-extremal.

All the above cases either conclude that the family $\mathbb{G}'$ whp is not $\alpha'$-extremal with probability at least $1-e^{-\Omega(\alpha' qm )} - \alpha qm e^{-\Omega(\alpha'^2 pn)}$  or give a contradiction. This finishes the proof of \Cref{lem:property preserve}.
\end{proof}

\section{Concluding remarks}
Using our proof mechanisms, we can establish the following transversal version of the robust Dirac's theorem (i.e., \Cref{thm:robustdirac}), which is also only a slight strengthening of \Cref{thm:main_spread_non_extremal}. Since we believe this result may be useful in the future for dealing with transversal Hamiltonicity problems, we include a brief description on how to adapt the proof of \Cref{thm:main_spread_non_extremal} to prove it.

\begin{theorem}\label{thm:hamilton path 2}
Let $0 < 1/n \ll 1/c, \eta, \zeta \ll \alpha \le 1$ and $m = \zeta n$. Let $M$ be a matching of size at most $m$ on a vertex set $V$ of size $n$. Let $\mathbb{G}$ be a non $\alpha$-extremal family of $n-|M|$ graphs on $V$ with minimum degree at least $(1/2 - \eta) n$. Then, there is a $(c/n^2)$-spread probability measure on the $\mathbb{G}$-transversal sets of edges $F$ such that $F\cup M$ forms a Hamilton cycle on $V$.
\end{theorem}
\begin{proof}[Proof sketch]
The above theorem can be easily proved almost verbatim as \Cref{thm:main_spread_non_extremal} with the following minor alterations. First, note that since $\mathbb{G}$ is not $\alpha$-extremal we have that $\mathbb{G}[V\setminus V(M)]$ is not $(\alpha/2)$-extremal. Thus one can prove \cref{lem:vortex} for elements of $\cS(\alpha/2,\alpha',\beta,\delta,\epsilon, N,\mathbb{G}[V\setminus V(M)])$ with the additional property that $d_{G_c}(v,V_1)\ge (1/2-\epsilon)|V_1|$ for every vertex $v\in V(M)$ and every color $c\in [n-|M|]$ (here the hierarchy is as follows: $0<1/n \ll 1/L , \eta, \zeta  \ll \epsilon \ll \delta, \beta \ll  \alpha' \ll \alpha \le 1$). Note that for such elements the graph family $\mathbb{G}_{C_1}[V_1]$ is not $\alpha'$-extremal, and  has minimum degree at least $(1/2 - \epsilon) n$, thus $\mathbb{G}_{C_1}[V_1\cup V(M)]$ is not $\alpha'/2$-extremal, and has minimum degree at least $(1/2 - 2\epsilon) n$. The second alteration is to set $\cP_0=M_{abs}\cup M$ at the first application of \Cref{lem:cover_down} in the proof of \Cref{thm:main_spread_non_extremal}. 
\end{proof}

\section*{Acknowledgement}
We are grateful to an anonymous referee for suggestions that greatly improved our exposition. We also thank Alan Frieze and Dongyeap Kang for pointing out a couple of minor errors in the previous version.

\printbibliography
\end{document}